\newtheorem*{thmA}{Theorem A}
\newtheorem{theorem}{Theorem}
\newtheorem{lemma}[theorem]{Lemma}
\newtheorem{corollary}[theorem]{Corollary}
\newtheorem{proposition}[theorem]{Proposition}
\begin{document}

\title{Symmetric majority rules\footnote{  We are grateful to two
anonymous referees for providing useful suggestions allowing to improve the readability of the paper. In particular, one of
the referees suggested an interesting link between the minimal majority principle and the method of simple majority decision (see Proposition \ref{simple} and the final remark in Section \ref{3-3}).
}}
\author{\textbf{Daniela Bubboloni} \\
{\small {Dipartimento di Scienze per l'Economia e  l'Impresa} }\\
\vspace{-6mm}\\
{\small {Universit\`{a} degli Studi di Firenze} }\\
\vspace{-6mm}\\
{\small {via delle Pandette 9, 50127, Firenze}}\\
\vspace{-6mm}\\
{\small {e-mail: daniela.bubboloni@unifi.it}} \and \textbf{Michele Gori}
 \\
{\small {Dipartimento di Scienze per l'Economia e  l'Impresa} }\\
\vspace{-6mm}\\
{\small {Universit\`{a} degli Studi di Firenze} }\\
\vspace{-6mm}\\
{\small {via delle Pandette 9, 50127, Firenze}}\\
\vspace{-6mm}\\
{\small {e-mail: michele.gori@unifi.it}}}

\maketitle

\begin{abstract}
\noindent In the standard arrovian framework and under the assumption that  individual preferences and social outcomes are linear orders on the set of alternatives, we study the rules which satisfy suitable symmetries and obey the majority principle. In particular, supposing that individuals and alternatives are exogenously partitioned into subcommittees and subclasses, we provide necessary and sufficient conditions for the existence of  reversal symmetric majority rules that are anonymous and neutral with respect to the considered partitions.
We also determine a general method for constructing and counting those rules and we explicitly apply it to some simple cases.
\end{abstract}

\vspace{4mm}

\noindent \textbf{Keywords:} Social welfare function; anonymity; neutrality; reversal symmetry; majority; group theory.

\vspace{2mm}

\noindent \textbf{JEL classification:} D71

\section{Introduction}

Committees are often required to provide a strict ranking of a given family of alternatives.
There are many procedures that members of a committee can conceive to aggregate their preferences on alternatives into a strict ranking of these alternatives. Among them the ones satisfying the principles of anonymity and neutrality are usually preferred. The principle of anonymity is the requirement that the identities of individuals are irrelevant to determine the social outcome. The principle of neutrality is instead the requirement that alternatives are equally treated. Unfortunately, despite their appeal, these principles can be both satisfied by an aggregation procedure only in very special circumstances.

Consider a committee having $h\ge 2$ members whose purpose is to strict rank $n\ge 2$ alternatives, and assume that individual and social preferences are strict rankings on the set of alternatives. A preference profile is a list of $h$ strict rankings each of them associated with the name of a specific individual and representing her preferences. Any function from the set of preference profiles to the set of social preferences is called a rule and represents a particular decision process which determines a social ranking of alternatives, whatever individual preferences the committee members express. In such a framework, Bubboloni and Gori (2014, Theorem 5) prove that it is possible to design anonymous and neutral rules if and only if
\begin{equation}\label{moulin-condition}
\gcd(h,n!)=1.
\end{equation}
Condition \eqref{moulin-condition}, first introduced by Moulin (1983, Theorem 1, p.25) as a necessary and sufficient condition for the existence of anonymous, neutral and efficient social choice functions, is a very strong arithmetical condition rarely satisfied in concrete situations. When it fails we can only try to design rules satisfying weaker versions of the principles of anonymity and neutrality.

A possible way to weaken anonymity is to divide individuals into subcommittees and require that, within each subcommittee, individuals equally influence the final collective decision, while individuals belonging to different subcommittees may have a different decision power. Analogously,
we can weaken neutrality thinking alternatives to be divided into subclasses and assuming that within each subclass alternatives are equally treated,  while alternatives belonging to different subclasses may have a different treatment. These versions of anonymity and neutrality are certainly natural and actually used in many practical collective decision processes.
That happens, for instance, when a committee has a president working as a tie-breaker or when a committee evaluates job candidates discriminating on their gender. Indeed, in the former example committee members can be thought to be divided in two subcommittees (the president in the first, all the others in the second) with anonymous individuals within each of them; in the latter example alternatives can be thought to be divided in two subclasses (the women in the first, the men in the second) such that no alternative has an exogenous advantage with respect to the other alternatives in the same subclass.

The formalization of those new concepts is natural. In fact, given a partition of individuals into subcommittees, we say that a rule is anonymous with respect to those subcommittees if it has the same value over any pair of preference profiles such that we can get one from the other by figuring to permute the names of individuals belonging to the same subcommittee. Given instead a partition of alternatives into subclasses, we say that a rule is neutral with respect to those subclasses if, for every pair of preference profiles such that we can get one from the other by figuring to permute the names of alternatives belonging to the same subclass, the social preferences associated with them coincide up to the considered permutation.
Of course, requiring that a rule is anonymous (neutral) is equivalent both to require that it is anonymous (neutral) with respect to the partition whose unique element is the whole set of individuals (alternatives), and to require that it is anonymous (neutral) with respect to any partition of individuals (alternatives).

Certainly, beyond anonymity and neutrality, social choice theorists identify further principles that rules should meet. The majority and the reversal symmetry principles are some of them. Roughly speaking, the majority principle requires that if a large enough amount of people prefer an alternative to another one, then the former alternative must be socially preferred to the latter one. In the literature we can find several ways to interpret that principle, such as relative majority, absolute majority, qualified majority and so on; here we focus on the minimal majority principle introduced by Bubboloni and Gori (2014).  Given an integer $\nu$, called majority threshold, not exceeding the number of members in the committee but exceeding half of it and a preference profile, we say that a social preference is consistent with the $\nu$-majority principle applied to the considered preference profile if the fact that an
alternative is preferred to another one by at least $\nu$ individuals implies that the alternative is socially ranked over the other one.
A rule is said to be a minimal majority rule if it associates with every preference profile a social preference which is consistent with the $\nu$-majority principle applied to that preference profile for all majority threshold $\nu$ not generating Condorcet-cycles. The principle of reversal symmetry states instead that if everybody in the society completely changes her mind about her own ranking of alternatives, then a complete change in the social outcome occurs. It can be formally described by recalling first that, given a preference, its reversal is the preference obtained making the best alternative the worst, the second best alternative  the second worst, and so on. A rule is then reversal symmetric if, for any pair of preference profiles such that one is obtained by the other reversing each individual preference, the social outcomes associated with them are one the reversal of the other.

In the present paper we analyse the rules that satisfy  anonymity with respect to subcommittees and neutrality with respect to subclasses, and also obey the principles of minimal majority and reversal symmetry. At the best of our knowledge, conditions assuring the existence of those rules  are not known. Some contributions related to different notions of anonymity and neutrality and their link with the majority principle are instead present in the literature. Under the assumption that there are two alternatives and assuming the possibility of indifference in individual and social preferences, Perry and Powers (2008) calculate the number of rules that satisfy anonymity and neutrality and the number of rules satisfying
a restrictive version of anonymity (that is, every individual but one is anonymous) and neutrality. In the same framework, Powers (2010) further shows that an aggregation rule satisfies that restrictive version of anonymity, neutrality and Maskin monotonicity if and only if it is close to an absolute qualified majority rule.
Quesada (2013) identifies instead seven axioms (among which are weak versions of anonymity and neutrality) characterizing the rules that are either the relative majority rule or the relative majority rule where a given individual, the chairman, can break the ties.
In the framework of social choice functions, Campbell and Kelly (2011, 2013) show that the relative majority is implied both by a suitable weak version of anonymity, neutrality and monotonicity, as well as by what they called limited neutrality, anonymity and monotonicity. Moreover, in the general case for the number of alternatives, some observations about different levels of anonymity and neutrality  can be found in the paper by Kelly (1991), who uses the language of permutations groups to discuss some open problems.

Here we follow the algebraic approach developed in Bubboloni and Gori (2014) to carry on our analysis, and we also adhere to the framework and notation used there. In that paper, which we refer to for further references on anonymity, neutrality and majority principles, the authors show how the notion of action of a group on a set can naturally and fruitfully be used to study problems concerning anonymity and neutrality. Indeed, among other things, they prove that condition \eqref{moulin-condition} is necessary and sufficient for the existence of anonymous and neutral minimal majority rule.\footnote{See Theorem 14 in Bubboloni and Gori (2014).} In this paper we adapt that algebraic reasoning in order to treat  anonymity with respect to subcommittees and neutrality with respect to subclasses, together with reversal symmetry and minimal majority. We obtain, as our main result, the following theorem.\footnote{Theorem A is a rephrasing of Theorem \ref{comm}.}
\begin{thmA}
Assume that individuals are partitioned into $s\ge 1$ subcommittees with number of members $b_1,\ldots,b_s$, and that
alternatives are partitioned into $t\ge 1$ subclasses with number of alternatives $c_1,\ldots,c_t$. Then:
\begin{itemize}
	\item[i)] there exists a minimal majority rule which is anonymous with respect to the considered subcommittees and neutral with respect to the considered subclasses if and only if
\begin{equation}\label{nostra0}
\gcd\big(\gcd (b_1,\ldots,b_s), \mathrm{lcm}(c_1!,\ldots,c_t!)\big)=1;
\end{equation}
	\item[ii)] there exists a reversal symmetric minimal majority rule which is anonymous with respect to the considered subcommittees and neutral with respect to the considered subclasses if and only if
\begin{equation}\label{nostra}
\gcd\big(\gcd (b_1,\ldots,b_s), \mathrm{lcm}(2,c_1!,\ldots,c_t!)\big)=1.
\end{equation}
\end{itemize}
\end{thmA}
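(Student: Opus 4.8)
The plan is to recast everything in the group-theoretic language of Bubboloni and Gori (2014). Let $U=\mathrm{Sym}(B_1)\times\cdots\times\mathrm{Sym}(B_s)$ be the Young subgroup of permutations of the individuals preserving the subcommittees, let $V=\mathrm{Sym}(C_1)\times\cdots\times\mathrm{Sym}(C_t)$ be the Young subgroup of permutations of the alternatives preserving the subclasses, and let $\rho$ be the order-reversing involution on linear orders. Then $U$ acts on the set of profiles $\mathcal{P}$ by permuting individual slots, $V$ acts on $\mathcal{P}$ and on the set $\mathcal{L}$ of linear orders by relabelling alternatives, and $\rho$ acts on $\mathcal{P}$ and on $\mathcal{L}$ by reversing every order; these actions commute suitably, so one obtains a group $G$ acting on $\mathcal{P}$ together with a homomorphism $G\to\mathrm{Sym}(\mathcal{L})$: for part i) take $G=U\times V$ with $V$ acting on $\mathcal{L}$ and $U$ acting trivially, and for part ii) take $G=U\times(V\times\langle\rho\rangle)$ with $V\times\langle\rho\rangle$ acting on $\mathcal{L}$. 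A rule $F\colon\mathcal{P}\to\mathcal{L}$ is anonymous with respect to the subcommittees and neutral with respect to the subclasses (and, for ii), reversal symmetric) precisely when it is $G$-equivariant. Finally, since the $\nu$-majority relation of a profile shrinks as $\nu$ grows, the thresholds not producing Condorcet cycles form an upward closed set, and a social order is consistent with all of them if and only if it extends the (acyclic) majority relation $M(p)$ at the least such threshold, with $M(p)=\emptyset$ when no such threshold exists; thus $F$ is a minimal majority rule iff $M(p)\subseteq F(p)$ for all $p$. As $M$ is itself $G$-equivariant, the problem reduces to: does there exist a $G$-equivariant $F$ with $M(p)\subseteq F(p)$ for every $p$?

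For the sufficiency direction I would isolate the key lemma: if the relevant arithmetic condition holds, then for every $p\in\mathcal{P}$ the stabiliser $G_p$ maps trivially into $\mathrm{Sym}(\mathcal{L})$. Granting this, the rest is routine: choose one representative in each $G$-orbit of $\mathcal{P}$, pick for it any linear extension of $M$ (possible since $M$ is acyclic), and propagate equivariantly along the orbit; the lemma makes this well defined, and equivariance of $M$ makes the resulting $G$-equivariant $F$ a minimal majority rule. The lemma is the technical core. Given $(\sigma,\tau)\in G_p$ (resp. $(\sigma,(\tau,\rho^{\varepsilon}))\in G_p$), the restriction of $p$ to a $\langle\sigma\rangle$-orbit of size $d$ is determined by its value at one point, and running once around the orbit forces the $d$-th power of the $\mathcal{L}$-component to be the identity; since a linear order has trivial stabiliser in $\mathrm{Sym}(\mathcal{L})$ under relabelling and reversal, the order $m$ of that component divides $d$, hence divides $\delta:=\gcd$ of the cycle lengths of $\sigma$. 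As $\delta$ divides each cycle length and the cycle lengths of $\sigma$ inside $B_i$ sum to $b_i$, we get $\delta\mid b_i$ for all $i$, so $m\mid\gcd(b_1,\dots,b_s)$; on the other hand $m$ divides $\mathrm{lcm}(c_1!,\dots,c_t!)$ in part i) and $\mathrm{lcm}(2,c_1!,\dots,c_t!)$ in part ii), the extra factor $2$ being contributed by $\rho$. Conditions \eqref{nostra0} and \eqref{nostra} then force $m=1$, i.e. $\tau=e$ (and $\varepsilon=0$), which is the lemma.

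For the necessity direction I would argue by contraposition, and I note that this direction uses the majority principle nowhere. If \eqref{nostra0} fails, a prime $q$ divides $\gcd(b_1,\dots,b_s)$ and some $c_j!$, hence $q\le c_j$; take $\tau\in\mathrm{Sym}(C_j)\le V$ a $q$-cycle on $q$ alternatives of $C_j$, take $\sigma\in U$ all of whose cycles have length $q$ (possible since $q\mid b_i$ for all $i$), and build $p$ invariant under $(\sigma,\tau)$ by fixing $p_i$ arbitrarily on one point of each $\langle\sigma\rangle$-orbit and propagating by $\tau$. A $G$-equivariant rule would satisfy $F(p)=\tau\cdot F(p)$, impossible since a $q$-cycle with $q\ge2$ fixes no linear order. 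For part ii), if \eqref{nostra} fails then either some odd prime $q$ divides $\gcd(b_1,\dots,b_s)$ with $q\le\max_j c_j$ (use the above), or $2\mid\gcd(b_1,\dots,b_s)$: if moreover $\max_j c_j\ge2$ we repeat the argument with a transposition in $V$, and if every subclass is a singleton we take $\sigma\in U$ fixed-point-free of order $2$ (possible, all $b_i$ being even) and build $p$ with $p_{\sigma(i)}=\overline{p_i}$, so that reversal symmetry plus equivariance would force $F(p)=\overline{F(p)}$, again impossible for $n\ge2$.

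The hard part will be the sufficiency lemma, and within it the bookkeeping turning ``$p$ is fixed by $(\sigma,\tau)$'' into the divisibility chain $m\mid\delta\mid b_i$; getting the orbit-size argument exactly right and handling the degenerate cases ($\sigma$ with fixed points, $M(p)=\emptyset$, $n=2$) is where care is needed. In part ii) there is the additional delicacy that $\rho$ does not act by relabelling, so one must check that $V\times\langle\rho\rangle$ genuinely acts on $\mathcal{L}$ with linear orders still having trivial stabiliser, and that the all-singletons construction really produces a $(\sigma,e,\rho)$-invariant profile; these are precisely the points where the factor $2$ in \eqref{nostra} enters and must be tracked.
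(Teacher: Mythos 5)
Your overall architecture --- recasting the symmetries as a group action on profiles, building the rule orbit by orbit, and reducing the stabilizer analysis to arithmetic on cycle types --- is exactly the paper's, and your necessity arguments (including the fixed-point-free involution with $p_{\sigma(i)}=\overline{p_i}$ for the all-singleton case) as well as the whole of part i) are essentially correct and match the paper's Theorem \ref{Regular} and Lemma \ref{propertyU}. The problem is your key sufficiency lemma for part ii): under \eqref{nostra} it is \emph{not} true that the stabilizer of every profile maps trivially into $\mathrm{Sym}(\mathcal{L})$. The step ``a linear order has trivial stabiliser under relabelling and reversal'' fails: every $q\in\mathcal{L}(N)$ is fixed by the nontrivial pair $(q\rho_0q^{-1},\rho_0)$. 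Concretely, take all subcommittees singletons (so \eqref{nostra} holds vacuously), one subclass $C_1=N$, and $p$ the constant profile $p_i=id$; then $(id,\rho_0,\rho_0)$ stabilizes $p$ and acts on $\mathcal{L}(N)$ by $q\mapsto\rho_0 q\rho_0$, which is not the identity for $n\ge 3$. Your divisibility chain breaks precisely at the odd-cycle-length case with $\varepsilon=1$: there one can only conclude that the $S_n$-component is a \emph{conjugate of $\rho_0$}, not the identity. This is why the paper's notion of regularity (condition \eqref{property}) allows stabilizer elements of the form $(\varphi,\psi_*,\rho_0)$ with $\psi_*$ conjugate to $\rho_0$, rather than demanding triviality; see also Corollary \ref{families} ii), which would be contradicted by your lemma.

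This is not a repairable detail but the removal of the paper's main difficulty. Once the stabilizer can contain $(\varphi,\psi_*,\rho_0)$, your ``routine'' step --- pick \emph{any} linear extension of the majority relation at each orbit representative and propagate --- no longer works: the value at the representative must be a linear extension of $\Sigma_{\nu(p)}(p)$ that additionally satisfies $\psi_* q\rho_0=q$, i.e.\ lies in the coset $uC_{S_n}(\rho_0)$, which has only $2^{\lfloor n/2\rfloor}\lfloor n/2\rfloor!$ of the $n!$ linear orders. Proving that $C_{\nu(p)}(p)\cap uC_{S_n}(\rho_0)\neq\varnothing$ is the technical heart of Theorem \ref{mainmain} and occupies all of Section \ref{appendix}: one has to analyze the chain-closure $\Sigma^C_\nu(p)$, isolate the set $\Gamma$ of alternatives forced into the top half, and assemble the palindromic order \eqref{q-magic} by hand. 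Your proposal contains no argument for this step, so the sufficiency direction of part ii) is genuinely open as written.
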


%

Note that
\eqref{nostra} obviously implies \eqref{nostra0}, and that  \eqref{nostra0} and \eqref{nostra} are equivalent if one among the $c_1,\dots, c_t$ is greater than $1$. 
Since \eqref{moulin-condition} implies \eqref{nostra},
Theorem A generalizes many results proved in Bubboloni and Gori (2014).
In particular, condition \eqref{moulin-condition} is sufficient not only to get anonymous and neutral minimal majority rules, but also to get rules having the further property of being reversal symmetric (Corollary \ref{families}). Yet, Theorem A goes much beyond that. Indeed, it shows that if  \eqref{moulin-condition} does not hold true but
the specific purpose of the collective choice naturally allows partitions of the individuals and the alternatives into subcommittees and subclasses satisfying  \eqref{nostra}, then it is possible to design a minimal majority rule which is anonymous and neutral with respect to those partitions as well as reversal symmetric. That happens, as a special but remarkable case, when
all the members of a committee but one are anonymous (for instance, the committee has a president),
 independently of the partition of alternatives in subclasses (Corollary \ref{president}).

We finally want to emphasize that our algebraic approach actually allows us to define a very general and wide-ranging notion of symmetry for rules (Section \ref{sym-rul}), which includes anonymity with respect to subcommittees, neutrality with respect to subclasses and reversal symmetry as particular instances. That notion of symmetry provides a fruitful unified framework with a double advantage: arguments and proofs becomes simpler and  more direct; the results reach a very satisfying level of generality (Theorems \ref{main} and \ref{mainmain}). Moreover, as in Bubboloni and Gori (2014), the algebraic machinery provides a method to potentially build all the rules described in Theorem A. In Section \ref{example} we briefly discuss some examples that explain how the theoretical results can be explicitly applied in some simple cases.

\section{Definitions and notation}

\subsection{Linear orders and permutations}

Let $X$ be a nonempty finite set. We denote by $\mathcal{R}(X)$ the set of relations on $X$. Given $R\in \mathcal{R}(X)$ and $x,y\in X$, we sometimes write $x\ge_{R}y$ instead of $(x,y)\in R$, as well as $x>_{R}y$ instead of $(x,y)\in R$ and $(y,x)\not \in R$. If $R\in  \mathcal{R}(X)$ is antisymmetric, then $x>_{R}y$ is equivalent to $x\ge _{R}y$ and $x\neq y$. 
A relation on $X$ is called a linear order on $X$  if it is complete, transitive and antisymmetric. The set of linear orders on $X$ is denoted by $\mathcal{L}(X)$. If $R_1,R_2\in  \mathcal{L}(X)$, then $R_1=R_2$ if and only if, for every $x,y\in X$,  $x>_{R_1}y$ implies  $x>_{R_2}y$.

We denote by $\mathrm{Sym}(X)$ the group of the bijective functions from $X$ to itself, with product given by the right-to-left composition, that is, if $f_1,f_2\in\mathrm{Sym}(X)$, then $f_1f_2\in \mathrm{Sym}(X)$ is the function such that, for every $x\in X$, $f_1f_2(x)=f_1(f_2(x))$. The neutral element of $\mathrm{Sym}(X)$ is given by the identity function $id$.
$\mathrm{Sym}(X)$ is called the symmetric group\footnote{The notation and results of group theory about  permutation groups and actions, not explicitly discussed in the paper, are standard (see, for instance, Wielandt (1964) and Rose (1978)).} on $X$ and its elements  permutations on $X$. Given $\sigma\in \mathrm{Sym}(X)$, we denote its order by $|\sigma|$.
For every $k\in\mathbb{N}$, the group $\mathrm{Sym}(\{1,\dots,k\})$ is simply denoted by $S_{k}$.

\subsection{Preference relations}

\noindent From now on, let $n\in \mathbb{N}$ with $n\ge 2$ be fixed, and let $N=\{1,\ldots,n\}$ be the set of alternatives.  A {\it preference relation} on $N$ is an element of $\mathcal{L}(N)$.
Throughout the section, let $q\in\mathcal{L}(N)$ be fixed. For every $x,y\in N$, we say that $x$ is  preferred to $y$ according to $q$ if $x>_{q} y$. For every $\psi \in S_n$, we define $\psi q$ as the element of $\mathcal{L}(N)$ such that, for every $x,y\in N$, $(x,y)\in \psi q$ if and only if $(\psi^{-1}(x),\psi^{-1}(y))\in q$. 
Consider the {\it order reversing permutation} in $S_n$, that is, the permutation $\rho_0\in S_n$ defined, for every $r\in \{1,\ldots,n\}$, as $\rho_0(r)=n-r+1$. Note that $|\rho_0|=2$.
We define $q\rho_0\in\mathcal{L}(N)$ as the element in $\mathcal{L}(N)$ such that, for every $x,y\in N$,
$(x,y)\in q\rho_0$ if and only if $(y,x)\in q$. We also define $q\; id=q$, where $id\in S_n$.
By definition, for every $x,y\in N$ and $\psi\in S_n$, we have that
\begin{equation}\label{psiR}
x>_{q}y \quad \mbox{if and only if} \quad \psi(x)>_{\psi q}\psi(y),
\end{equation}
and
\begin{equation}\label{ro}
x>_{q} y \quad \mbox{if and only if}\quad  y>_{q\rho_0} x.
\end{equation}
Consider now the set of vectors with $n$ distinct components in $N$ given by
\[
\mathcal{V}(N)=\left\{(x_r)_{r=1}^n\in N^n: x_{r_1}=x_{r_2}\Rightarrow r_1=r_2\right\},
\]
and think each vector $(x_r)_{r=1}^n\in \mathcal{V}(N)$ as a column vector, that is,
 \[
(x_r)_{r=1}^n=\begin{bmatrix}
x_1\\ \vdots \\ x_n
\end{bmatrix}=[x_1,\ldots,x_n]^T.
\]
The function $f_1:\mathcal{V}(N)\to \mathcal{L}(N)$ associating with  $(x_r)_{r=1}^n\in\mathcal{V}(N)$ the preference relation
\[
\{(x_{r_1},x_{r_2})\in N\times N: r_1,r_2\in \{1,\ldots,n\}, r_1\le r_2\},
\]
and the function
$f_2:S_n\to \mathcal{L}(N)$ associating with $\sigma\in S_n$  the preference relation
\[
\{(\sigma(r_1),\sigma(r_2))\in N\times N: r_1,r_2\in \{1,\ldots,n\}, r_1\le r_2\}
\]
are bijective, so that, in particular,   $|S_n|=|\mathcal{V}(N)|=|\mathcal{L}(N)|=n!$. We say that $x\in N$ has rank $r\in \{1,\ldots,n\}$ in $q$ if $x$ is the $r$-th component of $f_1^{-1}(q)$ or, equivalently, if $x$ is the image of $r$ through $f_2^{-1}(q)$.
Note now that, for every $\psi\in S_n$ and $\rho\in\{id,\rho_0\}$,  if $f_1^{-1}(q)=[x_1,\ldots,x_n]^T$, then
\[
f_1^{-1}(\psi q)=[\psi(x_1),\ldots,\psi(x_n)]^T,\quad \mbox{and}\quad f_1^{-1}(q\rho)=[x_{\rho(1)},\ldots,x_{\rho(n)}]^T;
\]
if $f_2^{-1}(q)=\sigma$, then 
\[
f_2^{-1}(\psi q)=\psi\sigma,\quad \mbox{and}\quad f_2^{-1}(q\rho)=\sigma \rho. 
\]
Thus, by the functions $f_1$ and $f_2$ we are allowed to identify the preference relation $q$ both with the vector $f_1^{-1}(q)$ and with the permutation  $f_2^{-1}(q)$, and to naturally interpret the products $\psi q$ and $q\rho$ in $\mathcal{V}(N)$ and in $S_n$.
For instance, if $n=4$ and 
\[
q=\left\{(4,2),(2,1),(1,3),(4,1),(4,3),(2,3),(4,4),(2,2), (1,1),(3,3)\right\}\in\mathcal{L}(\{1,2,3,4\}),
\]
then $q$ is identified with both
$
f_1^{-1}(q)=[4,2,1,3]^T \in \mathcal{V}(\{1,2,3,4\})
$ and $
f_2^{-1}(q)=(143)\in S_4,
$
so that 4 has rank 1, 2 has rank 2, 1 has rank 3, and 3 has rank 4 in $q$. Moreover, if 
$\psi=(342)\in S_4$, then we can write
\[
\psi q=(342)[4,2,1,3]^T=[2,3,1,4]^T \quad \mbox{and}\quad q\rho_0=[4,2,1,3]^T(14)(23)=[3,1,2,4]^T,
\]
as well as
\[
\psi q=(342)(143)=(123) \quad \mbox{and}\quad q\rho_0=(143)(14)(23)=(132).
\]

Thus,  identifying preference relations with vectors makes computations easy and intuitive. 
On the other hand, identifying preference relations with permutations allows to transfer the group properties of $S_n$ 
to the products between preference relations and permutations. In particular, by associativity and cancellation laws, for every $\psi_1,\psi_2\in S_n$ and $\rho_1,\rho_2\in\{id, \rho_0\}$, we have that $\psi_1 q=\psi_2 q$ if and only if $\psi_1=\psi_2$;  $q\rho_1=q\rho_2$ if and only if $\rho_1=\rho_2$; $(\psi_2\psi_1)q=\psi_2(\psi_1 q)$; $q(\rho_1\rho_2)=(q\rho_1)\rho_2$; $(\psi_1 q)\rho_1=\psi_1(q \rho_1)$.

Given now $\psi\in S_n$ and $\rho\in\{id,\rho_0\}$, we finally emphasize that the above discussion makes the products  $\psi q$ and $q\rho$ have interesting interpretations. Indeed, if $q$ represents the preferences of a certain individual, then $\psi q$  represents the preferences that the individual would have if, for every $x\in N$, alternative $x$ were called $\psi(x)$; $q\rho$ represents the preferences that the individual would have if, for every $r\in \{1,\ldots,n\}$, the alternative whose rank is $r$ is moved to rank $\rho(r)$. As a consequence, even though both $\psi$ and $\rho$ belong to $S_n$
they have different meanings. Indeed, $\psi$ maps alternatives to alternatives, while $\rho$ maps ranks to ranks. Moreover, looking at $q$ as a permutation, we have that $q$ maps ranks to alternatives. In particular, the set $\{1,\ldots,n\}$ sometimes refers to the set of alternatives, sometimes to the set of ranks.
Although the context always allows to understand the right interpretation, along the paper we denote that set by $N$ in the first case, and by $\{1,\ldots,n\}$ in the second one.

\subsection{Preference profiles}\label{model}

\noindent From now on, let $h\in \mathbb{N}$ with $h\ge 2$ be fixed, and let $H=\{1,\ldots,h\}$ be the set of individuals.
A {\it preference  profile} is an element of $\mathcal{L}(N)^h$. The set $\mathcal{L}(N)^h$ is denoted by $\mathcal{P}$.
If $p\in\mathcal{P}$ and $i\in H$, the $i$-th component of $p$ is denoted by $p_i$ and represents the preferences of individual $i$. Any $p\in\mathcal{P}$ can be identified with the matrix whose $i$-th column is the column vector representing the $i$-th component of $p$.

Let us consider the groups $\Omega=\{id,\rho_0\}\le S_n$ and $G=S_h\times S_n\times \Omega$.
For every $(\varphi,\psi,\rho)\in G$ and $p\in \mathcal{P}$, define $p^{(\varphi,\psi,\rho)} \in \mathcal{P}$ as the preference profile such that, for every $i\in H$,
\begin{equation}\label{action}
\left(p^{(\varphi,\psi,\rho)}\right)_i=\psi p_{\varphi^{-1}(i)}\rho.
\end{equation}
Since we have given no meaning to $(p_i)^{(\varphi,\psi,\rho)}$ for a single preference relation $p_i\in \mathcal{L}(N)$, we will write the $i$-th component  $p^{(\varphi,\psi,\rho)}$ simply as $p^{(\varphi,\psi,\rho)}_i,$ instead of $\left(p^{(\varphi,\psi,\rho)}\right)_i$.

The preference profile $p^{(\varphi,\psi,\rho)}$ is then obtained by $p$ according to the following rules: for every $i\in H$, individual $i$ is renamed $\varphi(i)$; for every $x\in N$, alternative $x$ is renamed $\psi(x)$; for every $r\in\{1,\ldots,n\}$, alternatives whose rank is $r$ are moved to rank $\rho(r)$. For instance, if $n=3$, $h=5$ and
\[
p=
\left[
\begin{array}{ccccccc}
3 & 1 & 2 & 3 &2\\
2 & 2 & 1 & 2 &3\\
1 & 3 & 3 & 1 &1
\end{array}
\right],\quad \varphi=(134)(25),\quad \psi=(12),\quad \rho=\rho_0=(13),
\]
then we have
\[
p^{(\varphi,id,id)}=
\left[
\begin{array}{ccccccc}
3 & 2 &3  &2  &1\\
2 & 3 & 2 & 1 &2\\
1 & 1 & 1 & 3 & 3\\
\end{array}
\right],\quad p^{(id,\psi,id)}=
\left[
\begin{array}{ccccccc}
 3& 2 & 1 & 3 &1\\
 1& 1 &2  & 1 &3\\
 2& 3 & 3 & 2 &2\\
 \end{array}
\right],
\]
\[
p^{(id,id,\rho_0)}=
\left[
\begin{array}{ccccccc}
1 & 3 & 3 & 1 &1\\
2 & 2 & 1 & 2 &3\\
3 & 1 & 2 & 3 & 2 \\
 \end{array}
\right],\quad
p^{(\varphi,\psi,\rho_0)}=
\left[
\begin{array}{ccccccc}
2 & 2 & 2 & 3 &3\\
1 & 3 & 1 & 2 &1\\
3 & 1 & 3 & 1 & 2 \\
 \end{array}
\right].
\]
As it is easy to verify, if $n=2$, then $p^{(id,\rho_0,id)}=p^{(id,id,\rho_0)}$ for all $p\in\mathcal{P}$; if $n\ge 3$, then
there do not exist $\varphi\in S_h$ and $\psi\in S_n$ such that, for every $p\in\mathcal{P}$, $p^{(\varphi,\psi,id)}=p^{(id,id,\rho_0)}$. In other words,
top-down reversing preference profiles cannot be reduced, in general, to a change in individuals and alternatives names.

\subsection{Symmetric minimal majority rules}\label{sym-rul}

A {\it rule} (or social welfare function) is a function from $\mathcal{P}$ to $\mathcal{L}(N)$. Given a subgroup $U$ of $G$, 
we say that a rule $F$ is $U$-{\it symmetric} if, for every $p\in \mathcal{P}$ and $(\varphi,\psi,\rho)\in U$,
\[
F(p^{(\varphi,\psi,\rho)})=\psi F(p) \rho.
\]
The set of $U$-symmetric rules is denoted by $\mathcal{F}^U$. Note that if $U'\le U$, then $\mathcal{F}^{U}\subseteq  \mathcal{F}^{U'}$.

 The concept of symmetry with respect to a subgroup $U$ of $G$ includes some classical requirements for rules. For instance,  a rule
$F$ is anonymous if and only if $F\in\mathcal{F}^{S_h\times \{id\}\times \{id\}}$; it is neutral if and only if $F\in\mathcal{F}^{\{id\}\times S_n\times \{id\}}$; it  is reversal symmetric if and only if $F\in\mathcal{F}^{\{id\}\times \{id\}\times \Omega}$. Moreover, as explained in Sections \ref{def-not} and \ref{sub-sub}, any combination of the principles of anonymity, neutrality (even in their weak versions involving subcommittees and subclasses) and reversal symmetry can also be described in terms of $U$-symmetry for a suitable choice of the subgroup $U$. 
Thus, we decided to work first in the general and unifying setting of $U$-symmetric rules, and
rephrase later the results obtained for $U$-symmetry in
more specific but better interpretable contexts.

Given $\nu \in \mathbb{N}\cap(h/2,h]$,  define, for every $p\in\mathcal{P}$, the set
\[
C_{\nu}(p)=\left\{q\in \mathcal{L}(N): \forall x,y\in N, |\{i\in H: x>_{p_i} y\}|\ge \nu\;\Rightarrow\; x>_{q} y\right\},
\]
that is, the set of preference relations having $x\in N$ preferred to $y\in N$ whenever, according to the preference profile $p$, at least $\nu$ individuals prefer $x$ to $y$. In other words, we have that $C_{\nu}(p)$ is the set of linear orders that are consistent with the principle of qualified majority with majority threshold equal to $\nu$ (briefly $\nu$-majority) applied to the preference profile $p$. For example, consider $h=9$, $n=3$ (so that $H=\{1,\ldots,9\}$ and $N=\{1,2,3\}$) and the preference profile
\begin{equation}\label{example-ref}
p=\left[
\begin{array}{ccccccccc}
1 & 1 & 2 & 2 & 2 & 3& 3& 3 & 3\\
2 & 2 & 1 & 3 & 3  & 1& 1& 1 &2\\
3 & 3 & 3 & 1 & 1  & 2& 2&  2&1 \\
 \end{array}
\right].
\end{equation}
A simple check shows that
\[
\begin{array}{lll}
|\{i\in H: 1>_{p_i} 2\}|=5, &\quad |\{i\in H: 1>_{p_i} 3\}|=3,&\quad |\{i\in H: 2>_{p_i} 3\}|=5,\\
\vspace{-2mm}\\
|\{i\in H: 2>_{p_i} 1\}|=4, &\quad |\{i\in H: 3>_{p_i} 1\}|=6,&\quad |\{i\in H: 3>_{p_i} 2\}|=4.\\
\end{array}
\]
It is now immediate to compute, for every majority threshold $\nu\in\mathbb{N}\cap(\frac{h}{2},h]=\{5,6,7,8,9\}$, the set $C_{\nu}(p)$. Indeed, $C_5(p)$ is the set of linear orders such that 1 is ranked above 2, 2  above 3, and 3 above 1, that is, $C_5(p)=\varnothing$; $C_6(p)$ is the set of linear orders such that 3 is ranked above 1, that is, $C_6(p)=\{[3,1,2]^T,[3,2,1]^T,[2,3,1]^T\}$; $C_7(p)=C_8(p)=C_9(p)=\mathcal{L}(N)$ because, for every $\nu\in \{7,8,9\}$ and $x,y\in N$,  we have $|\{i\in H: x>_{p_i} y\}|<\nu$, that is,
the $\nu$-majority principle applied to $p$ does not generate any constraint.

Of course, if $\nu,\nu' \in \mathbb{N}\cap (h/2,h]$ with $\nu\le \nu',$ then we have $C_{\nu}(p)\subseteq C_{\nu'}(p)$ for all $p\in\mathcal{P}$.
It is also known that\footnote{See, for instance, Propositions 6 and 7 in Bubboloni and Gori (2014).} $C_\nu(p)\neq \varnothing$ for all $p\in \mathcal{P}$ if and only if
$\nu> \frac{n-1}{n}h$.
For every $p\in\mathcal{P}$, define now
\[
\nu(p)=\min\{\nu \in \mathbb{N}\cap (h/2,h]: C_{\nu}(p)	\neq \varnothing\},
\]
and note that $\nu(p)$ is well defined as, for every $p\in\mathcal{P}$, $C_{h}(p)\neq\varnothing$.
 A rule $F$ is said to be a {\it minimal majority rule} if, for every $p\in \mathcal{P}$, $F(p)\in C_{\nu(p)}(p)$.  For instance, the preference profile $p$ defined in \eqref{example-ref} is such that $\nu(p)=6$, so that if $F$ is a minimal majority rule, then we have $F(p)\in C_{\nu(p)}(p)=C_6(p)$.
We denote the set of minimal majority rules by $\mathcal{F}_{\min}$. Of course, the set  $\mathcal{F}_{\min}$  is nonempty  by definition.
It is worth noting an interesting connection between minimal majority rules and the well-known method of simple majority decision, that is, the function $S:\mathcal{P}\to \mathcal{R}(N)$ defined, for every $p\in\mathcal{P}$, as
\begin{equation}\label{smr}
S(p)=\left\{(x,y)\in N^2: |\{i\in H: x\ge_{p_i}y\}|\ge h/2\right\}.
\end{equation}
Indeed, recall that the image of $S$ is contained in the set of complete relations on $N$ but, except for $h$ odd and $n=2$, it is not contained in $\mathcal{L}(N)$.
Then, asking for linear orders as social outcomes, $S$ is significant only on the set $\mathcal{P}_S=\{p\in\mathcal{P}:S(p)\in \mathcal{L}(N)\}$.
The following proposition shows that each minimal majority rule can be seen as an extension to the whole set $\mathcal{P}$ of the function $S_{\mid \mathcal{P}_S}$.

\begin{proposition}\label{simple}
Let $p\in \mathcal{P}_S$. Then, $\nu(p)=\left\lfloor \frac{h+1}{2}\right\rfloor$ and $C_{\nu(p)}(p)=\{S(p)\}$. In particular, for every $F\in\mathcal{F}_{\min}$, we have that $F(p)=S(p)$.
\end{proposition}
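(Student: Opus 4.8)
The plan is to set $q:=S(p)$ and $\nu_0:=\left\lfloor\frac{h+1}{2}\right\rfloor$, note that $\nu_0=\min\big(\mathbb{N}\cap(h/2,h]\big)$ and that $\nu_0\le h$ since $h\ge 2$, and then prove the two equalities $C_{\nu_0}(p)=\{q\}$ and $\nu(p)=\nu_0$. Granting these, the last assertion is immediate: if $F\in\mathcal{F}_{\min}$ then, by definition of a minimal majority rule, $F(p)\in C_{\nu(p)}(p)=C_{\nu_0}(p)=\{q\}=\{S(p)\}$.

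The engine of the argument is one elementary counting remark. For distinct $x,y\in N$ put $a(x,y):=|\{i\in H:x>_{p_i}y\}|$; since each $p_i$ is a linear order we have $a(x,y)+a(y,x)=h$ and $x\ge_{p_i}y\iff x>_{p_i}y$, so that $(x,y)\in S(p)\iff a(x,y)\ge h/2$. Because $p\in\mathcal{P}_S$, the relation $q=S(p)$ is antisymmetric, which rules out $a(x,y)=h/2$ for any distinct $x,y$ (that value would put both $(x,y)$ and $(y,x)$ in $S(p)$); hence $a(x,y)\ge h/2\iff a(x,y)>h/2\iff a(x,y)\ge\nu_0$, the last step because $a(x,y)\in\mathbb{Z}$ and $\nu_0$ is the least integer exceeding $h/2$. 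Using in addition that $q$ is complete and antisymmetric, one obtains, for all distinct $x,y\in N$,
\[
x>_{q}y\ \Longleftrightarrow\ a(x,y)\ge\nu_0 .
\]
In words: when the method of simple majority decision returns a linear order, no pair of alternatives is split evenly, and every socially ranked pair is supported by at least $\nu_0$ individuals.

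From this equivalence both inclusions of $C_{\nu_0}(p)=\{q\}$ follow quickly. First, $q\in C_{\nu_0}(p)$: if $|\{i\in H:x>_{p_i}y\}|\ge\nu_0$ then $x\neq y$ (the count is $0$ when $x=y$), so $a(x,y)\ge\nu_0$ and the displayed equivalence gives $x>_{q}y$, which is exactly the defining implication of $C_{\nu_0}(p)$; in particular $C_{\nu_0}(p)\neq\varnothing$. Conversely, let $q'\in C_{\nu_0}(p)$. Recalling that two linear orders coincide as soon as the strict part of one is contained in that of the other, it suffices to show $x>_{q}y\Rightarrow x>_{q'}y$ for all $x,y\in N$; but $x>_{q}y$ forces $a(x,y)\ge\nu_0$ by the displayed equivalence, and then $x>_{q'}y$ because $q'\in C_{\nu_0}(p)$. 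Hence $q'=q$ and $C_{\nu_0}(p)=\{q\}$. Finally $\nu(p)=\nu_0$: since $\nu_0$ is the least element of the range $\mathbb{N}\cap(h/2,h]$ over which the minimum defining $\nu(p)$ is taken, we have $\nu(p)\ge\nu_0$, while $C_{\nu_0}(p)\neq\varnothing$ gives $\nu(p)\le\nu_0$; thus $C_{\nu(p)}(p)=C_{\nu_0}(p)=\{S(p)\}$.

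I do not foresee a genuine obstacle here: the only real content is the parity observation that a linear-order outcome of simple majority decision cannot come from an evenly split pair, so the smallest admissible threshold $\nu_0$ is already binding and pins the outcome down uniquely; everything else is unwinding the definitions of $S$, $C_\nu$ and $\nu(\cdot)$. The sole point requiring a little care is the bookkeeping around $h/2$ — that for integer $a(x,y)$ one has $a(x,y)>h/2\iff a(x,y)\ge\nu_0$, and that $\nu_0$ indeed lies in $(h/2,h]$ because $h\ge 2$ — but this is routine.
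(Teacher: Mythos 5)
Your proof is correct and follows essentially the same route as the paper's: both hinge on the observation that, since $S(p)$ is antisymmetric, no pair of distinct alternatives can be split exactly $h/2$--$h/2$ when $p\in\mathcal{P}_S$, so the least admissible threshold $\nu^*=\min\{\nu\in\mathbb{N}:\nu>h/2\}$ already forces $C_{\nu^*}(p)=\{S(p)\}$, whence $\nu(p)=\nu^*$ (your only cosmetic difference is packaging the two inclusions into the single equivalence $x>_{S(p)}y\Leftrightarrow a(x,y)\ge\nu^*$). One remark, which you inherit from the statement itself rather than introduce: the identity $\min\{\nu\in\mathbb{N}:\nu>h/2\}=\left\lfloor\frac{h+1}{2}\right\rfloor$ that you (and the paper) assert holds only for odd $h$; for even $h$ that minimum is $\frac{h}{2}+1=\left\lfloor\frac{h}{2}\right\rfloor+1$, whereas $\left\lfloor\frac{h+1}{2}\right\rfloor=\frac{h}{2}$, so the correct closed form throughout is $\left\lfloor\frac{h}{2}\right\rfloor+1$.
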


\begin{proof}
Consider $p\in\mathcal{P}_S$, so that $S(p)\in\mathcal{L}(N)$, and define $\nu^*=\min\{\nu\in\mathbb{N}: \nu>h/2\}=\left\lfloor \frac{h+1}{2}\right\rfloor$. We claim that $C_{\nu^*}(p)=\{S(p)\}$.
Observe first that
$S(p)\in C_{\nu^*}(p)$. Indeed, let $x,y\in N$ such that $|\{i\in H: x>_{p_i}y\}|\ge\nu^*$, then $x\neq y$ and
$|\{i\in H: x>_{p_i}y\}|>h/2$, so that  $x>_{S(p)}y$. In order to show that $q\in C_{\nu^*}(p)$ implies $q=S(p)$, it is enough to prove that, for every $x,y\in N$, $x>_{S(p)}y$ implies $x>_q y$. Let  $x,y\in N$ such that $x>_{S(p)}y$. Thus, $x\neq y$ and
$|\{i\in H: x\ge_{p_i}y\}|\ge h/2$. Since $x\neq y$ and $p_i\in\mathcal{L}(N)$ for all $i\in H$, we get $|\{i\in H: x>_{p_i}y\}|\ge h/2$. However, we cannot have $|\{i\in H: x>_{p_i}y\}|= h/2$, since otherwise we should also have $(y,x)\in S(p)$, against $x>_{S(p)} y$. Thus, $|\{i\in H: x>_{p_i}y\}|> h/2$, so that $|\{i\in H: x>_{p_i}y\}|\ge \nu^*$. As a consequence, we have that $x>_q y$.

Since obviously $\nu(p)\ge \nu^*$, from the equality  $C_{\nu^*}(p)=\{S(p)\}$, we get that $\nu(p)=\nu^*$ and  $C_{\nu(p)}(p)=\{S(p)\}$. From $F\in\mathcal{F}_{\min}$, we finally deduce that $F(p)\in C_{\nu(p)}(p)=\{S(p)\}$, that is, $F(p)=S(p)$.
\end{proof}

Let us finally define, for every $U\le G$, the set  $\mathcal{F}^{U}_{\min}=\mathcal{F}^{U}\cap \mathcal{F}_{\min}$ of the {\it $U$-symmetric minimal majority rules}.
We are going to study under which conditions on the subgroup $U$
the sets $\mathcal{F}^U$ and $\mathcal{F}^U_{\min}$ are nonempty. Indeed, after having introduced in Section \ref{def-not} some fundamental algebraic tools, in Section \ref{ex-res}
we find out a condition on $U$, called regularity, that is equivalent to both $\mathcal{F}^U\neq\varnothing$ and $\mathcal{F}^U_{\min}\neq\varnothing$ (Theorems \ref{main} and \ref{mainmain}).

\section{Actions on the set of preference profiles}\label{def-not}

The next proposition, which generalizes Proposition 1 in Bubboloni and Gori (2014), shows that any subgroup $U$ of $G$ naturally acts on the set of preference profiles $\mathcal{P}$. That result is rich of consequences as it allows to exploit many general facts from group theory.

\begin{proposition}\label{action-l}
Let $U\le G$. Then:
\begin{itemize}
	\item[i)] for every $p\in\mathcal{P}$ and $(\varphi_1,\psi_1,\rho_1),(\varphi_2,\psi_2,\rho_2)\in U$, we have
\begin{equation}\label{action-e}
p^{\,(\varphi_1\varphi_2,\psi_1\psi_2,\rho_1\rho_2)}= \left(p^{\,(\varphi_2,\psi_2,\rho_2)}\right)^{(\varphi_1,\psi_1,\rho_1)};
\end{equation}
	\item[ii)] the function $f:U\to \mathrm{Sym}(\mathcal{P})$ defined, for every $(\varphi,\psi,\rho)\in U$, as
\[
f(\varphi,\psi,\rho):\mathcal{P}\to\mathcal{P},\quad p\mapsto p^{(\varphi,\psi,\rho)},
\]
is well posed and it is an action of the group $U$ on the set $\mathcal{P}$.
\end{itemize}
\end{proposition}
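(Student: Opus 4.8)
The plan is to verify both statements by a componentwise reduction to the group arithmetic of $S_h$, $S_n$ and $\Omega$, exploiting the identification of each preference relation $q$ with the permutation $f_2^{-1}(q)\in S_n$ discussed above. For part (i), I would fix $p\in\mathcal{P}$, two elements $g_1=(\varphi_1,\psi_1,\rho_1)$ and $g_2=(\varphi_2,\psi_2,\rho_2)$ of $U$, and an arbitrary $i\in H$, and expand both sides of \eqref{action-e} in their $i$-th component using \eqref{action}. Applying \eqref{action} twice, the $i$-th component of $\left(p^{(\varphi_2,\psi_2,\rho_2)}\right)^{(\varphi_1,\psi_1,\rho_1)}$ equals $\psi_1\bigl(\psi_2\,p_{\varphi_2^{-1}(\varphi_1^{-1}(i))}\,\rho_2\bigr)\rho_1$; since $\varphi_2^{-1}\varphi_1^{-1}=(\varphi_1\varphi_2)^{-1}$ in $S_h$, the permuted index is $(\varphi_1\varphi_2)^{-1}(i)$, which matches the index on the left-hand side. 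Writing $q=p_{(\varphi_1\varphi_2)^{-1}(i)}$ and passing to $f_2^{-1}(q)\in S_n$, the products $\psi q$ and $q\rho$ become ordinary products in $S_n$, so by associativity $\psi_1(\psi_2\,q\,\rho_2)\rho_1=\psi_1\psi_2\,q\,\rho_2\rho_1$; as $\Omega=\{id,\rho_0\}$ is abelian we have $\rho_2\rho_1=\rho_1\rho_2$, hence this equals $(\psi_1\psi_2)\,q\,(\rho_1\rho_2)$, which by \eqref{action} is exactly the $i$-th component of $p^{(\varphi_1\varphi_2,\psi_1\psi_2,\rho_1\rho_2)}$. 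Since $i$ is arbitrary, \eqref{action-e} follows. (Alternatively one can chain the compatibility rules $(\psi_2\psi_1)q=\psi_2(\psi_1q)$, $q(\rho_1\rho_2)=(q\rho_1)\rho_2$ and $(\psi_1q)\rho_1=\psi_1(q\rho_1)$ recorded above.)

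For part (ii), I would first observe that $f$ is well posed: $\psi q\in\mathcal{L}(N)$ and $q\rho\in\mathcal{L}(N)$ by construction, so every component of $p^{(\varphi,\psi,\rho)}$ lies in $\mathcal{L}(N)$ and therefore $p^{(\varphi,\psi,\rho)}\in\mathcal{P}$, making each $f(\varphi,\psi,\rho)$ a genuine map $\mathcal{P}\to\mathcal{P}$. Taking $\psi=\rho=id$ in \eqref{action} gives $p^{(id,id,id)}=p$, that is $f(id,id,id)=id_{\mathcal{P}}$. Read in $\mathrm{Sym}(\mathcal{P})$ with its right-to-left composition, \eqref{action-e} says $f(g_1g_2)=f(g_1)f(g_2)$ for all $g_1,g_2\in U$; applying this with $g_2=g_1^{-1}$, which belongs to $U$ because $U\le G$, yields $f(g_1)f(g_1^{-1})=f(g_1^{-1})f(g_1)=id_{\mathcal{P}}$, so each $f(g_1)$ is a bijection of $\mathcal{P}$ with inverse $f(g_1^{-1})$, and hence $f$ indeed maps $U$ into $\mathrm{Sym}(\mathcal{P})$. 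The relation $f(g_1g_2)=f(g_1)f(g_2)$ is then precisely the statement that $f$ is a group homomorphism $U\to\mathrm{Sym}(\mathcal{P})$, i.e.\ an action of $U$ on $\mathcal{P}$, so part (ii) is a formal consequence of part (i).

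There is no real difficulty in this proof; it is essentially a bookkeeping exercise. The only point that requires care is the identity in part (i): one must correctly invert the product $\varphi_1\varphi_2$, keep clearly separated which symbols permute alternatives and which permute ranks, and notice that it is the commutativity of $\Omega$ that reconciles the factor $\rho_2\rho_1$ produced by composing the two transformations on $\mathcal{P}$ with the factor $\rho_1\rho_2$ coming from the product taken inside $G$.
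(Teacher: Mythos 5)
Your proof is correct and follows essentially the same route as the paper's: expand the $i$-th component via \eqref{action} twice, match the index $(\varphi_1\varphi_2)^{-1}(i)$, and use associativity in $S_n$ together with the commutativity of $\Omega$ to turn $\rho_2\rho_1$ into $\rho_1\rho_2$. The only cosmetic difference is in part (ii): the paper invokes finiteness of $\mathcal{P}$ and checks surjectivity via $f(g)\bigl(p^{g^{-1}}\bigr)=p$, whereas you exhibit $f(g^{-1})$ directly as a two-sided inverse — both rest on the same identity, so this is an equivalent bookkeeping choice.
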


\begin{proof}
i) Fix $p\in  \mathcal{P}$ and $(\varphi_1,\psi_1,\rho_1),\, (\varphi_2,\psi_2,\rho_2)\in U$.
For every  $i\in H$, by  \eqref{action},  we have
\[
 p^{\,(\varphi_1\varphi_2,\psi_1\psi_2,\rho_1\rho_2)}_i =\psi_1\psi_2 p_{(\varphi_1\varphi_2)^{-1}(i)}\rho_1\rho_2,
 \]
and also, recalling that $\Omega$ is abelian,
\[
\left(p^{\,(\varphi_2,\psi_2,\rho_2)}\right)^{(\varphi_1,\psi_1,\rho_1)}_i=\psi_1 \left(p^{\,(\varphi_2,\psi_2,\rho_2)}\right)_{\varphi_1^{-1}(i)}\rho_1
=\psi_1\psi_2 p_{\varphi_2^{-1}(\varphi_1^{-1}(i))}\rho_2 \rho_1=\psi_1\psi_2 p_{(\varphi_1\varphi_2)^{-1}(i)}\rho_1\rho_2.
\]

ii) Fix $(\varphi,\psi,\rho)\in U$ and prove that $f(\varphi,\psi,\rho)\in \mathrm{Sym}(\mathcal{P})$. Since $\mathcal{P}$ is finite, it is enough to show that $f(\varphi,\psi,\rho)$ is surjective. Consider then $p\in\mathcal{P}$ and simply observe that, by \eqref{action} and \eqref{action-e},
\[
f(\varphi,\psi,\rho)\left(p^{\,(\varphi^{-1},\psi^{-1},\rho^{-1})}\right)=\left(p^{\,(\varphi^{-1},\psi^{-1},\rho^{-1})}\right)^{(\varphi,\psi,\rho)}=p^{(id,id,id)}=p.
\]
Since by \eqref{action-e} we also have that, for every $(\varphi_1,\psi_1,\rho_1),(\varphi_2,\psi_2,\rho_2)\in U$,
\begin{equation}\label{action-e-2}
f((\varphi_1,\psi_1,\rho_1) (\varphi_2,\psi_2,\rho_2))=f(\varphi_1,\psi_1,\rho_1)f(\varphi_2,\psi_2,\rho_2),
\end{equation}
we get that $f$ is an action of $U$ on $\mathcal{P}$.
\end{proof}

Thanks to the fact that the function $f$ defined in Proposition \ref{action-l} is an action, we can use in our context
notation and results concerning the action of a group on a set.
For every $p\in\mathcal{P}$, the set $\{p^g\in \mathcal{P}: g\in U\}$ is called the $U$-orbit of $p$ and is denoted by $p^U$. It is well known that
the set  $\mathcal{P}^U=\{p^U:p\in\mathcal{P}\}$ of the $U$-orbits is a partition\footnote{A partition of a nonempty set $X$ is a set of nonempty pairwise disjoint subsets of $X$ whose union is $X$.} of $\mathcal{P}$. We denote the order of $\mathcal{P}^U$ by $R(U)$.
Any vector $(p^j)_{j=1}^{R(U)}\in\mathcal{P}^{R(U)}$ such that $\mathcal{P}^U=\{p^{j\,U} : j\in\{1,\ldots,R(U)\}\}$,
is called a system of representatives of the $U$-orbits. The set of the systems of representatives of the $U$-orbits is nonempty and denoted by $\mathfrak{S}(U)$.
For every $p\in\mathcal{P}$, the stabilizer of $p$ in $U$ is the subgroup of $U$ defined by
\[
\mathrm{Stab}_U(p)=\{g\in U : p^g=p \},
\]
and it is well known that
\begin{equation}\label{orbord}
|p^U|=\displaystyle{\frac{|U|}{|\mathrm{Stab}_U(p)|}}.
\end{equation}
From Proposition \ref{action-l}, we gain the following simple but expressive result.

\begin{proposition}\label{U,V} Let $U,V\leq G$. Then $\mathcal{F}^U\cap \mathcal{F}^V=\mathcal{F}^{\langle U,V\rangle}.$
\end{proposition}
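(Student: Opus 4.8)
The plan is to prove the two inclusions $\mathcal{F}^{\langle U,V\rangle}\subseteq \mathcal{F}^U\cap\mathcal{F}^V$ and $\mathcal{F}^U\cap\mathcal{F}^V\subseteq\mathcal{F}^{\langle U,V\rangle}$ separately, exploiting the remark made just after the definition of $U$-symmetry (namely that $U'\le U$ implies $\mathcal{F}^{U}\subseteq\mathcal{F}^{U'}$) together with Proposition \ref{action-l}.

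For the first inclusion, I would simply observe that $U\le\langle U,V\rangle$ and $V\le\langle U,V\rangle$, so by the monotonicity remark $\mathcal{F}^{\langle U,V\rangle}\subseteq\mathcal{F}^U$ and $\mathcal{F}^{\langle U,V\rangle}\subseteq\mathcal{F}^V$, whence $\mathcal{F}^{\langle U,V\rangle}\subseteq\mathcal{F}^U\cap\mathcal{F}^V$. This direction is immediate and requires no computation.

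The substantive direction is $\mathcal{F}^U\cap\mathcal{F}^V\subseteq\mathcal{F}^{\langle U,V\rangle}$. Fix a rule $F\in\mathcal{F}^U\cap\mathcal{F}^V$. I want to show $F\in\mathcal{F}^W$ where $W=\langle U,V\rangle$. The key point is that $W$ is generated by $U\cup V$, so every element $g\in W$ can be written as a product $g=g_k\cdots g_1$ with each $g_j\in U\cup V$ (using that $U$ and $V$ are subgroups, so closed under inverses). The natural approach is therefore to first record the one-step property: if $F$ is $U$-symmetric and $g=(\varphi,\psi,\rho)\in U$, then $F(p^g)=\psi F(p)\rho$ for all $p$, and likewise for $g\in V$; hence this holds for all $g\in U\cup V$. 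Then one argues by induction on the length $k$ of a word representing $g\in W$: writing $g=g'h$ with $h\in U\cup V$ and $g'$ a shorter word, and writing $h=(\varphi_h,\psi_h,\rho_h)$, $g'=(\varphi',\psi',\rho')$, one computes, using Proposition \ref{action-l}(i) (i.e. $p^{g'h}=(p^h)^{g'}$) and the definition \eqref{action} of the product on $S_h\times S_n\times\Omega$,
\[
F(p^{g})=F\big((p^{h})^{g'}\big)=\psi' F(p^{h})\rho'=\psi'\psi_h F(p)\rho_h\rho'=\psi_g F(p)\rho_g,
\]
where $g=(\varphi_g,\psi_g,\rho_g)$ with $\psi_g=\psi'\psi_h$ and $\rho_g=\rho_h\rho'$ by the group law in $S_h\times S_n\times\Omega$ (here one uses that $\Omega$ is abelian, exactly as in the proof of Proposition \ref{action-l}). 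This shows $F\in\mathcal{F}^W$.

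The only mild subtlety — and the step I would be most careful about — is bookkeeping of the second and third coordinates of the group elements: one must make sure the $\psi$'s multiply on the left and the $\rho$'s on the right in the correct order so that the identity $F(p^g)=\psi_g F(p)\rho_g$ comes out with the $\psi_g,\rho_g$ that actually appear in $g$ as an element of $S_h\times S_n\times\Omega$. This is precisely the compatibility already verified in Proposition \ref{action-l}(i), so the induction goes through cleanly; there is no genuine obstacle, just the need to align conventions. An even slicker phrasing, which I would prefer if space allows, is to note that $\{g\in G: F(p^g)=\psi F(p)\rho \text{ for all } p\in\mathcal{P}\}$ is a subgroup of $G$ (closure under products is the displayed computation above, and closure under inverses follows by replacing $p$ with $p^{g^{-1}}$), and this subgroup contains $U\cup V$, hence contains $\langle U,V\rangle=W$; therefore $F\in\mathcal{F}^W$.
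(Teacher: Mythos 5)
Your proposal is correct and follows essentially the same route as the paper: the easy inclusion via monotonicity of $U\mapsto\mathcal{F}^U$, and the reverse inclusion by writing each element of $\langle U,V\rangle$ as a word in $U\cup V$ and inducting on word length using Proposition \ref{action-l}(i) and the commutativity of $\Omega$. The closing observation that $\{g\in G: F(p^g)=\psi F(p)\rho\ \text{for all } p\}$ is a subgroup containing $U\cup V$ is a clean repackaging of the same induction, not a genuinely different argument.
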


\begin{proof}  Since $\langle U,V\rangle$ contains both $U$ and $V$, we get $\mathcal{F}^{\langle U,V\rangle}\subseteq \mathcal{F}^U\cap \mathcal{F}^V.$ Let us now fix $F\in\mathcal{F}^U\cap \mathcal{F}^V$ and prove  $F\in \mathcal{F}^{\langle U,V\rangle}.$ Note first  that by the definition of generated subgroup, for every $g\in \langle U,V\rangle,$  there exists $k\in \mathbb{N}$ and $g_1,\ldots,g_k\in U\cup V$ such that $g=g_1\cdots g_k$. Let us define then, for every  $k\in \mathbb{N}$, the set $\langle U,V\rangle_k$ of the elements in $\langle U,V\rangle$ that can be written as product of $k$ elements of $U\cup V$. Then, we obtain $F\in \mathcal{F}^{\langle U,V\rangle}$ showing that, for every $k\in \mathbb{N}$, $p\in\mathcal{P}$ and $g=(\varphi, \psi, \rho)\in \langle U,V\rangle_k$, we have
$F(p^{(\varphi, \psi, \rho)})=\psi F(p)\rho$.
That can be easily proved by induction on $k$ using \eqref{action-e} and recalling that $\Omega$ is abelian.
\end{proof}

Proposition \ref{U,V} has interesting consequences. For instance, it implies that if $F$ is a rule, then $F$ is anonymous and neutral if and only if $F\in\mathcal{F}^{S_h\times S_n\times \{id\}}$; $F$ is anonymous and reversal symmetric if and only if $F\in\mathcal{F}^{S_h\times \{id\}\times \Omega}$; $F$ is neutral and reversal symmetric if and only if $F\in\mathcal{F}^{\{id\}\times S_n\times \Omega}$; $F$ is anonymous, neutral and reversal symmetric if and only if $F\in\mathcal{F}^{G}$.

\section{Existence results}\label{ex-res}
 In this section we study under which conditions on $U$ the sets $\mathcal{F}^U$ and $\mathcal{F}^U_{\min}$ are nonempty. Throughout the section, $U$ is a fixed subgroup of $G$.

\subsection{$U$-symmetric rules}

For every $p\in\mathcal{P}$, define the set
\[
S^U_1(p)=\left\{q\in \mathcal{L}(N): \forall (\varphi,\psi,\rho)\in \mathrm{Stab}_U(p), \psi q \rho=q\right\}.
\]

\begin{lemma}\label{anr-p-2}
If $F\in\mathcal{F}^{U}$, then, for every $p\in\mathcal{P}$, $F(p)\in S^U_1(p)$.
\end{lemma}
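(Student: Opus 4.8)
The plan is to unravel the two definitions involved and observe that they match almost verbatim once one restricts attention to stabilizing elements. Fix $p\in\mathcal{P}$ and let $(\varphi,\psi,\rho)\in\mathrm{Stab}_U(p)$ be arbitrary. By the definition of the stabilizer, $p^{(\varphi,\psi,\rho)}=p$; moreover, since $\mathrm{Stab}_U(p)\le U$, the triple $(\varphi,\psi,\rho)$ lies in $U$, so the hypothesis $F\in\mathcal{F}^U$ is applicable to it.

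Next I would apply the $U$-symmetry of $F$ to the profile $p$ and the triple $(\varphi,\psi,\rho)$, obtaining $F(p^{(\varphi,\psi,\rho)})=\psi F(p)\rho$. Substituting $p^{(\varphi,\psi,\rho)}=p$ into the left-hand side yields $F(p)=\psi F(p)\rho$. Since $(\varphi,\psi,\rho)$ was an arbitrary element of $\mathrm{Stab}_U(p)$, and since $F(p)\in\mathcal{L}(N)$ as $F$ is a rule, the condition defining $S^U_1(p)$ is met, i.e.\ $F(p)\in S^U_1(p)$. As $p$ was arbitrary, the lemma follows.

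There is essentially no obstacle: the statement is an immediate consequence of the fact that the map $f$ of Proposition \ref{action-l} genuinely fixes $p$ along the elements of $\mathrm{Stab}_U(p)$, combined with the equivariance built into the definition of $\mathcal{F}^U$. The only point requiring a little care is the order of the two observations — one must first record that $(\varphi,\psi,\rho)\in U$, so that the defining property of $\mathcal{F}^U$ is available, and only then use $p^{(\varphi,\psi,\rho)}=p$ to collapse the identity to $F(p)=\psi F(p)\rho$.
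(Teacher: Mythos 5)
Your argument is correct and coincides with the paper's own proof: both take an arbitrary $(\varphi,\psi,\rho)\in\mathrm{Stab}_U(p)\subseteq U$, apply the defining equivariance $F(p^{(\varphi,\psi,\rho)})=\psi F(p)\rho$, and use $p^{(\varphi,\psi,\rho)}=p$ to conclude $F(p)=\psi F(p)\rho$, which is exactly the membership condition for $S^U_1(p)$. Nothing is missing.
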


\begin{proof}Let $p\in\mathcal{P}$ and $(\varphi,\psi,\rho)\in \mathrm{Stab}_U(p)$. Then $p=p^{(\varphi,\psi,\rho)}$ and so
$F(p)=F(p^{(\varphi,\psi,\rho)})=\psi F(p)\rho,$ which says $F(p)\in S^U_1(p)$.
\end{proof}

\begin{proposition}\label{existence-A-N-R}
For every $(p^j)_{j=1}^{R(U)}\in\mathfrak{S}(U)$ and $(q_j)_{j=1}^{R(U)}\in \times_{j=1}^{R(U)} S^U_1(p^j)$, there exists a unique $F\in\mathcal{F}^{U}$ such that, for every $j\in\{1,\ldots, R(U)\}$, $F(p^j)=q_j$.
\end{proposition}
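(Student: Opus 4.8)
The plan is to build $F$ by transporting the prescribed values along the $U$-orbits. First I would fix $(p^j)_{j=1}^{R(U)}\in\mathfrak{S}(U)$ together with $(q_j)_{j=1}^{R(U)}\in\times_{j=1}^{R(U)}S^U_1(p^j)$. Since, by Proposition \ref{action-l}, the set $\mathcal{P}^U$ of $U$-orbits partitions $\mathcal{P}$, every $p\in\mathcal{P}$ can be written as $p=(p^j)^g$ for a unique index $j\in\{1,\dots,R(U)\}$ and some (non-unique) $g=(\varphi,\psi,\rho)\in U$, and I would define $F(p)=\psi q_j\rho$. This is a legitimate candidate rule because $q_j\in\mathcal{L}(N)$ forces $\psi q_j\rho\in\mathcal{L}(N)$, and choosing $g=(id,id,id)$ shows $F(p^j)=q_j$ for every $j$.

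The hard part will be well-definedness: $F(p)$ must not depend on the chosen $g$. Suppose $(p^j)^{g_1}=(p^j)^{g_2}$ with $g_\ell=(\varphi_\ell,\psi_\ell,\rho_\ell)\in U$, $\ell\in\{1,2\}$. Setting $g=g_2^{-1}g_1\in U$, identity \eqref{action-e} gives $(p^j)^{g_1}=\big((p^j)^{g}\big)^{g_2}$, and since the map $f(g_2)$ from Proposition \ref{action-l} is a bijection of $\mathcal{P}$, this yields $(p^j)^g=p^j$, i.e. $g=(\bar\varphi,\bar\psi,\bar\rho)\in\mathrm{Stab}_U(p^j)$; in particular $\psi_1=\psi_2\bar\psi$ and $\rho_1=\rho_2\bar\rho$. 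Now $q_j\in S^U_1(p^j)$ means precisely $\bar\psi q_j\bar\rho=q_j$, and using the associativity rules for the products $\psi q$ and $q\rho$ recorded at the end of Section \ref{model}, together with the fact that $\Omega$ is abelian (so $\rho_2\bar\rho=\bar\rho\rho_2$), one computes $\psi_1 q_j\rho_1=(\psi_2\bar\psi)q_j(\rho_2\bar\rho)=\psi_2(\bar\psi q_j\bar\rho)\rho_2=\psi_2 q_j\rho_2$. Hence $F$ is well posed. (Note that the hypothesis $q_j\in S^U_1(p^j)$ is exactly what is needed here, and it is also necessary by Lemma \ref{anr-p-2}.)

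What remains is $U$-symmetry and uniqueness, both formal. For symmetry, fix $p\in\mathcal{P}$ and $g'=(\varphi',\psi',\rho')\in U$, and write $p=(p^j)^g$ with $g=(\varphi,\psi,\rho)\in U$. By \eqref{action-e}, $p^{g'}=(p^j)^{g'g}$ with $g'g=(\varphi'\varphi,\psi'\psi,\rho'\rho)\in U$, so $F(p^{g'})=(\psi'\psi)q_j(\rho'\rho)$; on the other hand $\psi'F(p)\rho'=\psi'(\psi q_j\rho)\rho'$, and the two coincide by the same product bookkeeping as above (again using that $\Omega$ is abelian). Thus $F\in\mathcal{F}^U$. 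For uniqueness, if $F'\in\mathcal{F}^U$ also satisfies $F'(p^j)=q_j$ for all $j$, then for every $p=(p^j)^g\in\mathcal{P}$ the $U$-symmetry of $F'$ gives $F'(p)=F'\big((p^j)^g\big)=\psi F'(p^j)\rho=\psi q_j\rho=F(p)$, so $F'=F$. The single genuine obstacle is the well-definedness argument of the second paragraph; everything else is a routine unwinding of the action axioms and the product identities.
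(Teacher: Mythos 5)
Your proposal is correct and follows essentially the same route as the paper's proof: define $F(p)=\psi q_j\rho$ by transporting along orbits, reduce well-definedness to the stabilizer condition $\bar\psi q_j\bar\rho=q_j$ encoded in $S^U_1(p^j)$ (using \eqref{action-e} and the abelianness of $\Omega$), and then verify $U$-symmetry and uniqueness by the same bookkeeping. No gaps.
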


\begin{proof}Let $(p^j)_{j=1}^{R(U)}\in\mathfrak{S}(U)$, $(q_j)_{j=1}^{R(U)}\in  \times_{j=1}^{R(U)} S^U_1(p^j)$ and set $J=\{1,\ldots,R(U)\}$. Since $\{p^{j\,U} : j\in J\}$ is a partition of $\mathcal{P}$,
given $p\in \mathcal{P},$ there exist a unique $j\in J $ and $(\varphi,\psi,\rho)\in U$ such that $p=p^{j\,(\varphi,\psi,\rho)}$. Note that, if for some $j\in J $ there exist $(\varphi_1,\psi_1,\rho_1),(\varphi_2,\psi_2,\rho_2)\in U$ such that $p^{j\,(\varphi_1,\psi_1,\rho_1)}=p^{j\,(\varphi_2,\psi_2,\rho_2)}$, then $\psi_1 q_{j} \rho_1 =\psi_2 q_{j} \rho_2$. Indeed, by (\ref{action-e}), we have that $p^{j\,(\varphi_1,\psi_1,\rho_1)}=p^{j\,(\varphi_2,\psi_2,\rho_2)}$ implies
$(\varphi^{-1}_2\varphi_1,\psi_2^{-1}\psi_1,\rho_2^{-1}\rho_1)\in \mathrm{Stab}_U(p^{j}).$
Since $q_{j}\in S^U_1(p^{j})$ and $\Omega$ is abelian, we have that
$q_{j}=\psi^{-1}_2\psi_1 q_{j} \rho_2^{-1}\rho_1=\psi^{-1}_2\psi_1 q_{j} \rho_1\rho_2^{-1}$, and thus $\psi_1 q_{j} \rho_1 =\psi_2 q_{j} \rho_2$.

As a consequence, the rule $F$ defined, for every $p\in\mathcal{P}$, as $F(p)=\psi q_j \rho,$ where $j\in J $ and $(\varphi,\psi,\rho)\in U$ are such that $p=p^{j\,(\varphi,\psi,\rho)}$, is well defined. Moreover, for every $j\in J$, $F(p^j)=q_j$.
Let us prove that $F\in \mathcal{F}^{U}$. Consider $p\in \mathcal{P}$ and $(\varphi,\psi,\rho)\in U$ and let $p=p^{j\,(\varphi_1,\psi_1,\rho_1)}$ for some $j\in J$ and
$(\varphi_1,\psi_1,\rho_1)\in U$.
By the definition of $F$ and by \eqref{action-e},  we conclude that
\[
F(p^{(\varphi,\psi,\rho)})=F\left(\left(p^{j\,(\varphi_1,\psi_1,\rho_1)}\right)^{(\varphi,\psi,\rho)}\right)
=F(p^{j\,(\varphi\varphi_1,\psi\psi_1,\rho\rho_1)})=(\psi\psi_1)q_j(\rho\rho_1)
\]
\[
=(\psi\psi_1)q_j(\rho_1\rho)=\psi(\psi_1q_j\rho_1)\rho=\psi F(p^{j\,(\varphi_1,\psi_1,\rho_1)})\rho=\psi F(p)\rho.
\]
In order to prove the uniqueness of $F$, it suffices to note that if $F'\in\mathcal{F}^{U}$ is such that, for every $j\in J$, $F'(p^j)=q_j$, then  $F'(p^{j\,(\varphi,\psi,\rho)})=\psi q_j\rho=F(p^{j\,(\varphi,\psi,\rho)})$ for all $j\in J$ and $(\varphi,\psi,\rho)\in U$. Thus, for every $p\in\mathcal{P}$, $F'(p)=F(p)$.
\end{proof}

Given $(p^j)_{j=1}^{R(U)}\in\mathfrak{S}(U)$ and $(q_j)_{j=1}^{R(U)}\in \times_{j=1}^{R(U)} S^U_1(p^j)$, denote by $F\left[(p^j)_{j=1}^{R(U)},(q_j)_{j=1}^{R(U)}\right]$ the unique $F\in\mathcal{F}^{U}$ such that, for every $j\in\{1,\ldots, R(U)\}$, $F(p^j)=q_j$.
The next result, which is an immediate consequence of Lemma \ref{anr-p-2} and Proposition \ref{existence-A-N-R}, provides a formula to count the elements in $\mathcal{F}^U$, when a system of representatives is known. That formula is important under a theoretical perspective (see, for instance, the proof of Theorem \ref{main}) but it can also be useful in practical situations as shown in Section \ref{example}.

\begin{proposition}\label{anr}
Let $(p^j)_{j=1}^{R(U)}\in\mathfrak{S}(U)$. Then the function
\begin{equation}\label{function}
f:\times_{j=1}^{R(U)}  S^U_1(p^j)	\to \mathcal{F}^{U}, \quad  f\left((q_j)_{j=1}^{R(U)}\right)=F\left[(p^j)_{j=1}^{R(U)},(q_j)_{j=1}^{R(U)}\right]
\end{equation}
is bijective. In particular, $|\mathcal{F}^{U}|= \prod_{j=1}^{R(U)} | S^U_1(p^j)|$.
\end{proposition}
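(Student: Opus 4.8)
The plan is to show that the map $f$ in \eqref{function} is well defined, injective and surjective, and then read off the cardinality formula as an immediate corollary.

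First I would address well-definedness and injectivity simultaneously, since both follow from Proposition \ref{existence-A-N-R}. For each $(q_j)_{j=1}^{R(U)}\in \times_{j=1}^{R(U)} S^U_1(p^j)$, Proposition \ref{existence-A-N-R} guarantees that $F\left[(p^j)_{j=1}^{R(U)},(q_j)_{j=1}^{R(U)}\right]$ exists and is \emph{uniquely} determined as an element of $\mathcal{F}^{U}$; hence $f$ is a genuine function into $\mathcal{F}^{U}$. For injectivity, suppose $f\left((q_j)_{j=1}^{R(U)}\right)=f\left((q_j')_{j=1}^{R(U)}\right)$, i.e. the two associated $U$-symmetric rules coincide. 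Evaluating both rules at the representative $p^j$, and using the defining property $F\left[(p^j)_{j},(q_j)_{j}\right](p^j)=q_j$, we get $q_j=q_j'$ for every $j\in\{1,\ldots,R(U)\}$; thus the tuples agree.

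Next I would prove surjectivity, which is where Lemma \ref{anr-p-2} enters. Let $F\in\mathcal{F}^{U}$ be arbitrary. For each $j\in\{1,\ldots,R(U)\}$ set $q_j:=F(p^j)$. By Lemma \ref{anr-p-2}, applied to the preference profile $p^j$, we have $q_j=F(p^j)\in S^U_1(p^j)$, so the tuple $(q_j)_{j=1}^{R(U)}$ lies in $\times_{j=1}^{R(U)} S^U_1(p^j)$, the domain of $f$. Now the rule $F\left[(p^j)_{j=1}^{R(U)},(q_j)_{j=1}^{R(U)}\right]$ is, by construction, the unique $U$-symmetric rule taking the value $q_j$ at $p^j$ for every $j$; since $F$ is also a $U$-symmetric rule with $F(p^j)=q_j$ for all $j$, uniqueness forces $F=F\left[(p^j)_{j=1}^{R(U)},(q_j)_{j=1}^{R(U)}\right]=f\left((q_j)_{j=1}^{R(U)}\right)$. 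Hence $f$ is onto.

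Finally, since $f$ is a bijection between $\times_{j=1}^{R(U)} S^U_1(p^j)$ and $\mathcal{F}^{U}$, counting yields
\[
|\mathcal{F}^{U}|=\left|\times_{j=1}^{R(U)} S^U_1(p^j)\right|=\prod_{j=1}^{R(U)} |S^U_1(p^j)|.
\]
I do not anticipate a real obstacle here: the substance of the argument has already been isolated in Lemma \ref{anr-p-2} (which pins every $U$-symmetric rule's values at representatives inside the sets $S^U_1(p^j)$) and in Proposition \ref{existence-A-N-R} (which conversely realizes every admissible choice of such values by a unique $U$-symmetric rule). The only point requiring a little care is making explicit that $F\left[\cdot,\cdot\right]$ denotes precisely the rule supplied by Proposition \ref{existence-A-N-R}, so that its characterizing property $F(p^j)=q_j$ may be used freely; with that in hand the proof is a short formal verification.
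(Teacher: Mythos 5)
Your proof is correct and follows exactly the route the paper intends: the paper itself presents Proposition \ref{anr} as an immediate consequence of Lemma \ref{anr-p-2} (which gives surjectivity) and Proposition \ref{existence-A-N-R} (whose existence-and-uniqueness statement gives well-definedness and injectivity). Your write-up simply makes explicit the short verification the paper leaves to the reader.
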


 Let us introduce now a crucial definition.
A subgroup $U$  of $G$ is said to be {\it regular} if, for every $p\in\mathcal{P}$,
\begin{equation}\label{property}
\begin{array}{c}
\mbox{there exists }\psi_*\in S_n\mbox{ conjugate to }\rho_0\mbox{ such that}\\
\vspace{-2mm}\\
\mathrm{Stab}_U(p)\subseteq \left(S_h\times \{id\}\times \{id\}\right)\cup \left( S_h\times \{\psi_*\}\times \{\rho_0\}\right).\\
\end{array}
\end{equation}
 Note that, within our notation, two permutations $\sigma_1,\sigma_2\in S_n$ are conjugate if there exists $u\in S_n$ such that $\sigma_1=u\sigma_2 u^{-1}.$
If $U$ is regular and $W\leq U$, then $W$ is regular too, because $\mathrm{Stab}_W(p)=W\cap \mathrm{Stab}_U(p)$. In particular, $G$ is regular if and only if each subgroup of $G$ is regular.
The next theorem shows the deep impact of the concept of regular subgroup in our research.

\begin{theorem}\label{main}
$\mathcal{F}^U\neq\varnothing$ if and only if $U$ is regular. Moreover, if $U$ is regular, then
$\left(2^{\lfloor \frac{n}{2}\rfloor} \lfloor \frac{n}{2}\rfloor !\right)^{R(U)}$ divides $|\mathcal{F}^U|$.
\end{theorem}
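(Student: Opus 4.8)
The plan is to reduce the whole statement to an elementary analysis of the sets $S^U_1(p)$ inside $S_n$. By Lemma~\ref{anr-p-2} and Proposition~\ref{existence-A-N-R}, we have $\mathcal{F}^U\neq\varnothing$ if and only if $S^U_1(p)\neq\varnothing$ for every $p\in\mathcal{P}$, and by Proposition~\ref{anr} we have $|\mathcal{F}^U|=\prod_{j=1}^{R(U)}|S^U_1(p^j)|$ for any system of representatives $(p^j)_{j=1}^{R(U)}\in\mathfrak{S}(U)$. So it suffices to prove, for an arbitrary $p\in\mathcal{P}$, that (a) $S^U_1(p)\neq\varnothing$ exactly when $\mathrm{Stab}_U(p)$ satisfies the regularity inclusion~\eqref{property}, and (b) if it does, then $2^{\lfloor n/2\rfloor}\lfloor n/2\rfloor!$ divides $|S^U_1(p)|$. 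The computational device throughout is the bijection $f_2:S_n\to\mathcal{L}(N)$: writing $q=f_2(\sigma)$, the identities $f_2^{-1}(\psi q)=\psi\sigma$ and $f_2^{-1}(q\rho)=\sigma\rho$ turn the defining condition of $S^U_1(p)$ into the requirement that $\psi\sigma\rho=\sigma$, i.e.\ $\psi=\sigma\rho^{-1}\sigma^{-1}$, for every $(\varphi,\psi,\rho)\in\mathrm{Stab}_U(p)$.

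For (a), suppose $q=f_2(\sigma)\in S^U_1(p)$. If $(\varphi,\psi,id)\in\mathrm{Stab}_U(p)$, then $\psi=\sigma\, id\,\sigma^{-1}=id$; if $(\varphi,\psi,\rho_0)\in\mathrm{Stab}_U(p)$, then $\psi=\sigma\rho_0^{-1}\sigma^{-1}=\sigma\rho_0\sigma^{-1}$ (recall $|\rho_0|=2$), so $\psi$ is conjugate to $\rho_0$ and equals the fixed element $\psi_*:=\sigma\rho_0\sigma^{-1}$ for every $\rho_0$-element of the stabilizer --- indeed the product of one $\rho_0$-element with the inverse of another lies in $S_h\times\{id\}\times\{id\}$ by Proposition~\ref{action-l}(i), so once the $id$-elements are known to have trivial second coordinate the two second coordinates must agree. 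Hence $S^U_1(p)\neq\varnothing$ forces $\mathrm{Stab}_U(p)\subseteq(S_h\times\{id\}\times\{id\})\cup(S_h\times\{\psi_*\}\times\{\rho_0\})$ with $\psi_*$ conjugate to $\rho_0$. Conversely, if \eqref{property} holds at $p$, pick $u\in S_n$ with $\psi_*=u\rho_0u^{-1}$ and set $q=f_2(u)$: elements of $\mathrm{Stab}_U(p)$ with third coordinate $id$ lie in $S_h\times\{id\}\times\{id\}$, hence have trivial second coordinate and impose nothing on $q$, while those with third coordinate $\rho_0$ have second coordinate $\psi_*$ and satisfy $\psi_* u\rho_0=u\rho_0u^{-1}u\rho_0=u$, so $q\in S^U_1(p)$. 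Running this over all $p\in\mathcal{P}$ gives the first assertion.

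For (b) assume $U$ regular and fix $p$. Recall the standard fact that $\rho_0$ has cycle type $2^{\lfloor n/2\rfloor}$, with one extra fixed point when $n$ is odd, so its centralizer $C_{S_n}(\rho_0)$ in $S_n$ has order $2^{\lfloor n/2\rfloor}\lfloor n/2\rfloor!$. If $\mathrm{Stab}_U(p)$ contains no element with third coordinate $\rho_0$, then $\mathrm{Stab}_U(p)\subseteq S_h\times\{id\}\times\{id\}$, so $S^U_1(p)=\mathcal{L}(N)$ has $n!$ elements, and $|C_{S_n}(\rho_0)|$ divides $n!$ by Lagrange. Otherwise, by \eqref{property} every $\rho_0$-element of $\mathrm{Stab}_U(p)$ has second coordinate $\psi_*$ conjugate to $\rho_0$ and every $id$-element has trivial second coordinate, so through $f_2$ the set $S^U_1(p)$ is identified with $\{\sigma\in S_n:\sigma\rho_0\sigma^{-1}=\psi_*\}$; fixing one solution $\sigma_0$, a permutation $\sigma$ lies in this set iff $\sigma_0^{-1}\sigma\in C_{S_n}(\rho_0)$, so it is a coset of $C_{S_n}(\rho_0)$ and $|S^U_1(p)|=2^{\lfloor n/2\rfloor}\lfloor n/2\rfloor!$. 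In either case $2^{\lfloor n/2\rfloor}\lfloor n/2\rfloor!$ divides $|S^U_1(p)|$; taking the product over a system of representatives and invoking Proposition~\ref{anr} gives that $\big(2^{\lfloor n/2\rfloor}\lfloor n/2\rfloor!\big)^{R(U)}$ divides $|\mathcal{F}^U|$.

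The routine ingredients are the cycle-type and centralizer computation and the bookkeeping with $f_2$. The point requiring care is the claim that all $\rho_0$-elements of a stabilizer admitting some $q\in S^U_1(p)$ carry the \emph{same} second coordinate $\psi_*$: this is precisely what makes the union in~\eqref{property}, rather than a larger subset, the correct condition, and it is what pins $|S^U_1(p)|$ down to a single coset of $C_{S_n}(\rho_0)$ instead of a union of several. I also expect a little attention to be needed to reconcile ``$S^U_1(p)\neq\varnothing$ for every $p\in\mathcal{P}$'', which is the natural output of Lemma~\ref{anr-p-2} and Proposition~\ref{existence-A-N-R}, with the universal quantifier over $p$ in the definition of regularity.
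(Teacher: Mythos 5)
Your proposal is correct and follows essentially the same route as the paper: the forward direction extracts from an element of $S^U_1(p)$ (equivalently, from $F(p)$ via Lemma \ref{anr-p-2}) that $id$-elements of the stabilizer force $\psi=id$ and $\rho_0$-elements force $\psi=\sigma\rho_0\sigma^{-1}$, while the converse and the divisibility claim come from identifying $S^U_1(p)$ with either $\mathcal{L}(N)$ or a coset of $C_{S_n}(\rho_0)$ and applying Proposition \ref{anr}. The only cosmetic difference is that you make the $f_2$-identification explicit where the paper manipulates the products $\psi q\rho$ directly.
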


\begin{proof}
Assume that $\mathcal{F}^U\neq\varnothing$ and pick $F\in \mathcal{F}^U$. Fix $p\in\mathcal{P}$ and define $\psi_*=F(p)\rho_0F(p)^{-1}$. Given $(\varphi,\psi,\rho)\in \mathrm{Stab}_U(p)$, let us prove that $\rho=id$ implies $\psi=id$, and $\rho=\rho_0$ implies $\psi=\psi_*$.
Observe that $F(p)=F(p^{(\varphi,\psi,\rho)})=\psi F(p)\rho.$ As a consequence, if $\rho=id$ then we get $F(p)=\psi F(p)$ and thus $\psi=id$; if $\rho=\rho_0$ we get $F(p)=\psi F(p)\rho_0$ and thus, due to $|\rho_0|=2,$ we find that $\psi=F(p)\rho_0F(p)^{-1}=\psi_*$.

 Next assume that $U$ is regular and fix $p\in\mathcal{P}$. Let $\psi_*=u\rho_0u^{-1}$, for a suitable $u\in S_n,$ as in \eqref{property}. We show that
 \[
S_1^U(p)
=\left\{
\begin{array}{ll}
\mathcal{L}(N) & \mbox{if }\;\mathrm{Stab}_U(p)\leq S_h\times\{id\}\times\{id\}\\
\vspace{-2mm}\\
u  C_{S_n}(\rho_0)     & \mbox{if }\;\mathrm{Stab}_U(p)\not\leq S_h\times\{id\}\times\{id\}.\\
\end{array}
\right.
\]
The first fact is a trivial consequence of the definition of $S_1^U(p)$. Assume now that there exists $(\varphi_*,\psi_*,\rho_0)\in \mathrm{Stab}_U(p).$
By the regularity of $U$, the only elements of $\mathrm{Stab}_U(p)$ affecting $S_1^U(p)$ are those belonging to $S_h\times\{\psi_*\}\times\{\rho_0\}$,
so that $S_1^U(p)=\{q\in \mathcal{L}(N): \psi_* q \rho_0=q\}$. However,
\[
q\in S_1^U(p) \quad\Leftrightarrow \quad \psi_*= q\rho_0q^{-1} \quad\Leftrightarrow \quad \rho_0(u^{-1}q)=(u^{-1}q)\rho_0
\]
\[
\Leftrightarrow \quad u^{-1}q\in C_{S_n}(\rho_0)\quad\Leftrightarrow \quad q\in uC_{S_n}(\rho_0),
\]
which means $S_1^U(p)=uC_{S_n}(\rho_0)$.

Since $|\mathcal{L}(N)|=n!$ and it is well known that $|C_{S_n}(\rho_0)|=2^{\lfloor \frac{n}{2}\rfloor} \lfloor \frac{n}{2}\rfloor !,$ we also get
 \[
|S_1^U(p)|
=\left\{
\begin{array}{ll}
n! & \mbox{if }\;\mathrm{Stab}_U(p)\leq S_h\times\{id\}\times\{id\}\\
\vspace{-2mm}\\
  2^{\lfloor \frac{n}{2}\rfloor} \lfloor \frac{n}{2}\rfloor !     &   \mbox{if }\;\mathrm{Stab}_U(p)\not\leq S_h\times\{id\}\times\{id\}.\\
\end{array}\right.
\]
Thus, by Proposition \ref{anr},
$|\mathcal{F}^U|\ge 1$.
 Moreover, since $C_{S_n}(\rho_0)\leq S_n$, we have that $|C_{S_n}(\rho_0)|$ divides $n!$ and then
 $\left(2^{\lfloor \frac{n}{2}\rfloor} \lfloor \frac{n}{2}\rfloor !\right)^{R(U)}$ divides $|\mathcal{F}^U|$.
\end{proof}

\subsection {$U$-symmetric minimal majority rules}

We start our study of  $\mathcal{F}_{\min}^U$ with a preliminary lemma dealing with the behaviour of the set $C_{\nu(p)}(p)$ with respect to the action of $U.$
\begin{lemma}\label{c-nu}
Let $\nu\in \mathbb{N}\cap (h/2,h]$, $p\in\mathcal{P}$ and $(\varphi,\psi,\rho)\in G$. Then $C_{\nu}(p^{(\varphi,\psi,\rho)})=\psi C_{\nu}(p)\rho$,
$\nu(p^{(\varphi,\psi,\rho)})=\nu(p)$ and $C_{\nu(p^{(\varphi,\psi,\rho)})}(p^{(\varphi,\psi,\rho)})=\psi C_{\nu(p)}(p)\rho$.
\end{lemma}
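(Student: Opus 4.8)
The plan is to prove the three assertions in sequence, with the first equality $C_{\nu}(p^{(\varphi,\psi,\rho)})=\psi C_{\nu}(p)\rho$ doing essentially all the work; the other two will follow formally. First I would unwind the definitions. Fix $\nu\in\mathbb{N}\cap(h/2,h]$, $p\in\mathcal{P}$ and $(\varphi,\psi,\rho)\in G$. The key observation is a counting identity: for every $x,y\in N$,
\[
|\{i\in H: x>_{p^{(\varphi,\psi,\rho)}_i}y\}|=|\{i\in H: \psi^{-1}(x)>_{p_{\varphi^{-1}(i)}\rho}\psi^{-1}(y)\}|,
\]
using \eqref{action}, and then, since $\varphi\in S_h$ is a bijection of $H$, reindexing $i\mapsto\varphi^{-1}(i)$ leaves the cardinality unchanged, so this equals $|\{j\in H:\psi^{-1}(x)>_{p_j\rho}\psi^{-1}(y)\}|$. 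Finally I would use \eqref{ro} (if $\rho=\rho_0$) or the trivial $q\,id=q$ (if $\rho=id$): in the first case $\psi^{-1}(x)>_{p_j\rho_0}\psi^{-1}(y)$ is equivalent to $\psi^{-1}(y)>_{p_j}\psi^{-1}(x)$, in the second it is $\psi^{-1}(x)>_{p_j}\psi^{-1}(y)$. So in all cases the count for the pair $(x,y)$ in $p^{(\varphi,\psi,\rho)}$ equals the count for a corresponding pair in $p$.

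Next I would translate this into a membership equivalence. A linear order $q'\in\mathcal{L}(N)$ lies in $C_{\nu}(p^{(\varphi,\psi,\rho)})$ iff for all $x,y\in N$ with $|\{i: x>_{p^{(\varphi,\psi,\rho)}_i}y\}|\ge\nu$ we have $x>_{q'}y$. Write $q'=\psi q\rho$ for the unique $q\in\mathcal{L}(N)$ (this uses the cancellation laws recorded in the excerpt: $q\mapsto\psi q\rho$ is a bijection of $\mathcal{L}(N)$ since $\psi\in S_n$ and $\rho\in\Omega$). By \eqref{psiR} and \eqref{ro}, $x>_{\psi q\rho}y$ is equivalent to $\psi^{-1}(x)>_{q\rho}\psi^{-1}(y)$, which in turn (by the same case analysis on $\rho$) corresponds to a statement about $q$ at the pair $(\psi^{-1}(x),\psi^{-1}(y))$ or $(\psi^{-1}(y),\psi^{-1}(x))$. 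Combining with the counting identity above and letting $(x,y)$ range over all of $N^2$ (equivalently, letting the transformed pair range over $N^2$, since $\psi$ is a bijection), the condition ``$q'\in C_{\nu}(p^{(\varphi,\psi,\rho)})$'' becomes literally ``$q\in C_{\nu}(p)$''. Hence $\psi q\rho\in C_\nu(p^{(\varphi,\psi,\rho)})$ iff $q\in C_\nu(p)$, i.e. $C_{\nu}(p^{(\varphi,\psi,\rho)})=\psi C_{\nu}(p)\rho$.

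For the second assertion, note that $q\mapsto\psi q\rho$ being a bijection of $\mathcal{L}(N)$ implies $C_\nu(p^{(\varphi,\psi,\rho)})\ne\varnothing$ iff $C_\nu(p)\ne\varnothing$, for every admissible $\nu$; since $\nu(p)$ and $\nu(p^{(\varphi,\psi,\rho)})$ are both defined as the minimum of the same set $\{\nu\in\mathbb{N}\cap(h/2,h]: C_\nu(\cdot)\ne\varnothing\}$, they coincide. The third assertion is then immediate: substitute $\nu=\nu(p)=\nu(p^{(\varphi,\psi,\rho)})$ into the first equality.

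I do not expect a serious obstacle here; the only point demanding care is bookkeeping the case distinction $\rho\in\{id,\rho_0\}$ and making sure the index substitution by $\varphi$ and the alternative-relabelling by $\psi$ are applied on the correct side — that is, keeping straight that $\psi$ acts on alternatives (so reindexing pairs $(x,y)$ by $\psi^{-1}$ is a bijection of $N^2$) while $\rho$ swaps the roles of the two alternatives in a pair. A clean way to avoid fiddly pair-by-pair manipulation is to prove the two inclusions $\psi C_\nu(p)\rho\subseteq C_\nu(p^{(\varphi,\psi,\rho)})$ and, applying this with $(\varphi,\psi,\rho)$ replaced by $(\varphi^{-1},\psi^{-1},\rho^{-1})$ together with Proposition~\ref{action-l}(i) (which gives $(p^{(\varphi,\psi,\rho)})^{(\varphi^{-1},\psi^{-1},\rho^{-1})}=p$), the reverse inclusion; but the direct equivalence argument above is just as short.
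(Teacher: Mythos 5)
Your proof is correct. The difference from the paper's proof is organizational rather than conceptual: the paper does not reprove the anonymity--neutrality part at all, but cites Lemma 10 of Bubboloni and Gori (2014) for the identity $C_{\nu}(p^{(\varphi,\psi,id)})=\psi C_{\nu}(p)$, then factors $p^{(\varphi,\psi,\rho_0)}=\bigl(p^{(\varphi,\psi,id)}\bigr)^{(id,id,\rho_0)}$ via \eqref{action-e} and only computes the new piece $C_{\nu}(p^{(id,id,\rho_0)})=C_{\nu}(p)\rho_0$ directly (a three-line manipulation using $p^{(id,id,\rho_0)}_i=p_i\rho_0$, \eqref{ro} and $|\rho_0|=2$). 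You instead prove the full equality monolithically from the definitions, handling $\varphi$, $\psi$ and $\rho$ simultaneously through the counting identity and the change of variables $(x,y)\mapsto(\psi^{-1}(x),\psi^{-1}(y))$. What your route buys is self-containedness --- no appeal to the earlier paper --- at the cost of the bookkeeping you yourself flag; what the paper's route buys is brevity and reuse of established machinery, with the composition property of the action doing the gluing. The treatment of the second and third assertions (nonemptiness preserved under the bijection $q\mapsto\psi q\rho$, hence equal minima, then substitution) is identical in both.
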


\begin{proof} By Lemma 10 in Bubboloni and Gori (2014) we know that  $C_{\nu}(p^{(\varphi,\psi,id)})=\psi C_{\nu}(p)$ for all $p\in\mathcal{P}$ and all $(\varphi,\psi,id)\in G$. Since, by \eqref{action-e}, $C_{\nu}(p^{(\varphi,\psi,\rho_0)})=C_{\nu}(\left(p^{(\varphi,\psi,id)}\right)^{(id,id,\rho_0)}),$ we can prove the first part of the statement showing that, for every $p\in\mathcal{P}$, we have
\[
C_{\nu}(p^{(id,id,\rho_0)})=C_{\nu}(p)\rho_0.
\]
However, due to $p^{(id,id,\rho_0)}_i=p_i\rho_0$ and recalling that $|\rho_0|=2$, we immediately have
\[
\begin{array}{c}
C_{\nu}(p^{(id,id,\rho_0)})=\{q\in \mathcal{L}(N): \forall x,y\in N, |\{i\in H: x>_{p_i\rho_0} y\}|\ge \nu\Rightarrow x>_{q} y\}\\
\vspace{-2mm}\\
= \{q\in \mathcal{L}(N): \forall x,y\in N, |\{i\in H: y>_{p_i} x\}|\ge \nu\Rightarrow y>_{q\rho_0} x\}\\
\vspace{-2mm}\\
= \{q_1\rho_0\in \mathcal{L}(N): \forall x,y\in N, |\{i\in H: y>_{p_i} x\}|\ge \nu\Rightarrow y>_{q_1} x\}=C_{\nu}(p)\rho_0.\\
\end{array}
\]
In order to complete the proof, note that $|C_{\nu}(p^{(\varphi,\psi,\rho)})|=| C_{\nu}(p)|$ and thus $C_{\nu}(p^{(\varphi,\psi,\rho)})\neq\varnothing$ if and only if $C_{\nu}(p)\neq\varnothing$, that is, $\nu(p^{(\varphi,\psi,\rho)})=\nu(p)$. It also follows that $C_{\nu(p^{(\varphi,\psi,\rho)})}(p^{(\varphi,\psi,\rho)})=C_{\nu(p)}(p^{(\varphi,\psi,\rho)})=\psi C_{\nu(p)}(p)\rho$.
\end{proof}

For every $p\in\mathcal{P}$, define  the set
\[
S^U_2(p)=S^U_1(p)\cap C_{\nu(p)}(p).
\]
\begin{lemma}\label{anrmax-p-2}
If $F\in\mathcal{F}^{U}_{\min}$, then, for every $p\in\mathcal{P}$, $F(p)\in S^U_2(p)$.
\end{lemma}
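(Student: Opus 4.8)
The plan is to obtain this lemma by simply unwinding the definition of $\mathcal{F}^U_{\min}$ and invoking the two facts already available. By definition $\mathcal{F}^{U}_{\min}=\mathcal{F}^{U}\cap \mathcal{F}_{\min}$, so a rule $F\in\mathcal{F}^{U}_{\min}$ is simultaneously $U$-symmetric and a minimal majority rule. First I would use the $U$-symmetry: applying Lemma \ref{anr-p-2} directly yields $F(p)\in S^U_1(p)$ for every $p\in\mathcal{P}$. Next I would use the minimal majority property: by the very definition of a minimal majority rule, $F(p)\in C_{\nu(p)}(p)$ for every $p\in\mathcal{P}$. Combining these two memberships gives $F(p)\in S^U_1(p)\cap C_{\nu(p)}(p)$, which is exactly $S^U_2(p)$ by its definition, completing the argument.

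There is essentially no obstacle here: the statement is an immediate corollary of Lemma \ref{anr-p-2} together with the definition of $\mathcal{F}_{\min}$, and the only "work" is to record that the two required conditions on $F(p)$ are precisely the two sets whose intersection defines $S^U_2(p)$. The lemma is included at this point merely to package, for later use (in the study of $\mathcal{F}^U_{\min}$ and its cardinality), the constraint that any $U$-symmetric minimal majority rule must satisfy pointwise.
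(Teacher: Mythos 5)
Your proposal is correct and coincides exactly with the paper's own proof: both apply Lemma \ref{anr-p-2} to get $F(p)\in S^U_1(p)$, invoke the definition of $\mathcal{F}_{\min}$ to get $F(p)\in C_{\nu(p)}(p)$, and intersect. Nothing further is needed.
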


\begin{proof}
Let $p\in\mathcal{P}$. Since $F\in\mathcal{F}^{U}$, by Lemma \ref{anr-p-2}, we know that $F(p)\in S^U_1(p)$. Moreover, as $F\in \mathcal{F}_{\min}$, we also have that $F(p)\in C_{\nu(p)}(p)$. Thus $F(p)\in S^U_2(p)$.
\end{proof}

 Proposition \ref{n-an-nu} below is analogous to Proposition \ref{anr}. We stress that it is a fundamental tool to prove Theorem \ref{mainmain}.

\begin{proposition}\label{n-an-nu}
Let  $(p^j)_{j=1}^{R(U)}\in\mathfrak{S}(U)$ and $f$ defined as in \eqref{function}. Then
\[
f\left(\times_{j=1}^{R(U)}S^U_2(p^j)\right)=\mathcal{F}_{\min}^{U}.
\]
In particular, $|\mathcal{F}^{U}_{\min}|=\prod_{j=1}^{R(U)}|S^U_2(p^j)|$.
\end{proposition}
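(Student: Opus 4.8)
The statement to prove is Proposition \ref{n-an-nu}, which refines Proposition \ref{anr}: with a system of representatives $(p^j)_{j=1}^{R(U)}$ fixed and $f$ the bijection of \eqref{function} from $\times_{j=1}^{R(U)} S^U_1(p^j)$ onto $\mathcal{F}^U$, we must show that $f$ maps the subproduct $\times_{j=1}^{R(U)} S^U_2(p^j)$ exactly onto $\mathcal{F}^U_{\min}$, and then read off the cardinality formula. Since $f$ is already known to be a bijection onto $\mathcal{F}^U$ and $\mathcal{F}^U_{\min}=\mathcal{F}^U\cap\mathcal{F}_{\min}$, it suffices to prove the set equality, i.e. for every $(q_j)_{j=1}^{R(U)}\in\times_j S^U_1(p^j)$,
\[
F\left[(p^j)_{j=1}^{R(U)},(q_j)_{j=1}^{R(U)}\right]\in\mathcal{F}_{\min}\quad\Longleftrightarrow\quad q_j\in C_{\nu(p^j)}(p^j)\ \text{for all }j.
\]
The cardinality claim is then immediate since $f$ restricted to the subproduct is still injective.

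\textbf{The forward implication} ($\Rightarrow$) is the easy direction and is essentially Lemma \ref{anrmax-p-2}: if $F:=f\big((q_j)_j\big)$ lies in $\mathcal{F}^U_{\min}$, then by Lemma \ref{anrmax-p-2} we have $F(p^j)\in S^U_2(p^j)$ for each $j$; but $F(p^j)=q_j$ by construction of $F\left[(p^j)_j,(q_j)_j\right]$, so $q_j\in S^U_2(p^j)=S^U_1(p^j)\cap C_{\nu(p^j)}(p^j)$, which in particular gives $q_j\in C_{\nu(p^j)}(p^j)$. (Conversely, membership in $S^U_1(p^j)$ is automatic from the domain of $f$.) So really the content is the reverse implication.

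\textbf{The reverse implication} ($\Leftarrow$) is where the work is. Assume $q_j\in C_{\nu(p^j)}(p^j)$ for every $j$; since also $q_j\in S^U_1(p^j)$, in fact $q_j\in S^U_2(p^j)$. Set $F:=F\left[(p^j)_{j=1}^{R(U)},(q_j)_{j=1}^{R(U)}\right]\in\mathcal{F}^U$; we must show $F\in\mathcal{F}_{\min}$, i.e. $F(p)\in C_{\nu(p)}(p)$ for \emph{every} $p\in\mathcal{P}$, not just the representatives. Fix an arbitrary $p\in\mathcal{P}$. Since $\mathcal{P}^U$ partitions $\mathcal{P}$, there are $j\in\{1,\dots,R(U)\}$ and $(\varphi,\psi,\rho)\in U$ with $p=(p^j)^{(\varphi,\psi,\rho)}$. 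By the defining property of $F$ (proved inside Proposition \ref{existence-A-N-R}, via $U$-symmetry), $F(p)=F\big((p^j)^{(\varphi,\psi,\rho)}\big)=\psi F(p^j)\rho=\psi q_j\rho$. Now apply Lemma \ref{c-nu} to the triple $(\varphi,\psi,\rho)$ and the profile $p^j$: it yields $C_{\nu(p)}(p)=C_{\nu((p^j)^{(\varphi,\psi,\rho)})}\big((p^j)^{(\varphi,\psi,\rho)}\big)=\psi\,C_{\nu(p^j)}(p^j)\,\rho$. Since $q_j\in C_{\nu(p^j)}(p^j)$, we get $\psi q_j\rho\in\psi\,C_{\nu(p^j)}(p^j)\,\rho=C_{\nu(p)}(p)$, i.e. $F(p)\in C_{\nu(p)}(p)$. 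As $p$ was arbitrary, $F\in\mathcal{F}_{\min}$, hence $F\in\mathcal{F}^U_{\min}$.

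\textbf{Wrapping up and the cardinality.} Combining the two implications gives $f\big(\times_{j=1}^{R(U)}S^U_2(p^j)\big)=\mathcal{F}^U_{\min}$. Finally, since $f$ is injective on all of $\times_{j=1}^{R(U)}S^U_1(p^j)$ (Proposition \ref{anr}), its restriction to $\times_{j=1}^{R(U)}S^U_2(p^j)$ is a bijection onto $\mathcal{F}^U_{\min}$, whence $|\mathcal{F}^U_{\min}|=\prod_{j=1}^{R(U)}|S^U_2(p^j)|$. The only place demanding any care is checking that the well-definedness argument of Proposition \ref{existence-A-N-R} really does deliver $F(p)=\psi q_j\rho$ for the particular $(\varphi,\psi,\rho)$ chosen (independence of the choice is exactly what was established there using $q_j\in S^U_1(p^j)$ and the commutativity of $\Omega$); everything else is a direct substitution of Lemma \ref{c-nu}. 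I do not expect a genuine obstacle here — the proposition is the "minimal majority" shadow of Proposition \ref{anr}, and Lemma \ref{c-nu} is precisely the compatibility statement that makes the orbit-transport argument go through.
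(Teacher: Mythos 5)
Your proposal is correct and follows essentially the same route as the paper: the inclusion $f\left(\times_{j}S^U_2(p^j)\right)\subseteq\mathcal{F}^{U}_{\min}$ via the orbit-transport identity of Lemma \ref{c-nu}, and the reverse inclusion via Lemma \ref{anrmax-p-2}, with the cardinality formula read off from the injectivity of $f$ established in Proposition \ref{anr}. Your reformulation as a biconditional over tuples in $\times_{j}S^U_1(p^j)$ is only a cosmetic repackaging of the paper's two set inclusions.
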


\begin{proof} Set $S=\times_{j=1}^{R(U)}S^U_2(p^j)$ and $J=\{1\ldots,R(U)\}$.
In order to prove that $f(S)\subseteq \mathcal{F}^{U}_{\min}$, let $(q_j)_{j=1}^{R(U)}\in S$ and prove that $F=F\left[(p^j)_{j=1}^{R(U)},(q_j)_{j=1}^{R(U)}\right]\in \mathcal{F}^{U}_{\min}$.
By Proposition \ref{anr}, we immediately have that $F\in \mathcal{F}^{U}$. Consider now $p\in\mathcal{P}$.
Then there exist $j\in J$ and $(\varphi,\psi,\rho)\in U$ such that $p=p^{j\,(\varphi,\psi,\rho)}$. As, for every $j\in J$, we know that $q_j=F(p^j)\in C_{\nu(p^j)}(p^j)$,
by Lemma \ref{c-nu}, we also have
\[
F(p)=\psi q_j \rho\in \psi  C_{\nu(p^j)}(p^j)\rho = C_{\nu(p^{j\,(\varphi,\psi,\rho)})}(p^{j\,(\varphi,\psi,\rho)})=C_{\nu(p)}(p),
\]
as desired.

In order to prove that  $\mathcal{F}^{U}_{\min} \subseteq f(S)$, let $F\in \mathcal{F}^{U}_{\min}$ and define, for every $j\in J$, $q_j=F(p^j)$.  Then by Lemma \ref{anrmax-p-2}, we immediately have $(q_j)_{j=1}^{R(U)}\in S$ and since  $F=f\left((q_j)_{j=1}^{R(U)}\right)$ we get $F\in f(S)$.
\end{proof}

The following theorem emphasizes the importance of regular groups. Indeed, it shows that those groups are consistent not only  with the symmetry of rules, as established in Theorem \ref{main}, but also with the minimal majority principle. Its quite technical proof can be found in Section \ref{appendix}.

\begin{theorem}\label{mainmain}$\mathcal{F}^U_{\min}\neq\varnothing$ if and only if  $U$ is regular.
\end{theorem}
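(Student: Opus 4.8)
The plan is as follows. The ``only if'' implication is immediate: since $\mathcal{F}^U_{\min}\subseteq\mathcal{F}^U$, if $\mathcal{F}^U_{\min}\neq\varnothing$ then $\mathcal{F}^U\neq\varnothing$, and Theorem \ref{main} gives that $U$ is regular. For the converse, assume $U$ is regular. By Proposition \ref{n-an-nu} it suffices to show that $S^U_2(p)=S^U_1(p)\cap C_{\nu(p)}(p)\neq\varnothing$ for every $p\in\mathcal{P}$, since then $|\mathcal{F}^U_{\min}|=\prod_{j=1}^{R(U)}|S^U_2(p^j)|\geq 1$. Fix $p\in\mathcal{P}$. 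From the proof of Theorem \ref{main} we know that, by regularity, either $\mathrm{Stab}_U(p)\leq S_h\times\{id\}\times\{id\}$, in which case $S^U_1(p)=\mathcal{L}(N)$ and hence $S^U_2(p)=C_{\nu(p)}(p)\neq\varnothing$ by the definition of $\nu(p)$; or there exists $(\varphi_*,\psi_*,\rho_0)\in\mathrm{Stab}_U(p)$ with $\psi_*$ conjugate to $\rho_0$ and $S^U_1(p)=\{q\in\mathcal{L}(N):\psi_*q\rho_0=q\}$. Only the second case requires work.

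In the second case set $\sigma=\psi_*$, an involution conjugate to $\rho_0$, hence with no fixed point if $n$ is even and exactly one fixed point if $n$ is odd. Two reductions are needed. First, unwinding \eqref{psiR} and \eqref{ro} one checks that, for $q\in\mathcal{L}(N)$, the condition $\psi_*q\rho_0=q$ is equivalent to $x>_qy\Leftrightarrow\sigma(y)>_q\sigma(x)$ for all $x,y\in N$; that is, $\sigma$ is order-reversing for $q$. Second, from $p^{(\varphi_*,\psi_*,\rho_0)}=p$, \eqref{action-e} and the computation in the proof of Lemma \ref{c-nu}, the relation $P=\{(x,y)\in N^2:|\{i\in H:x>_{p_i}y\}|\geq\nu(p)\}$ satisfies $x\mathrel{P}y\Leftrightarrow\sigma(y)\mathrel{P}\sigma(x)$. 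Since $\nu(p)>h/2$, $P$ is asymmetric, and since $C_{\nu(p)}(p)\neq\varnothing$, $P$ is acyclic, so its transitive closure $\preceq$ is a strict partial order for which $\sigma$ is still order-reversing; moreover $C_{\nu(p)}(p)$ is exactly the set of linear orders extending $\preceq$. Hence $S^U_2(p)\neq\varnothing$ follows once we produce a linear extension $q$ of $\preceq$ for which $\sigma$ is order-reversing.

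This last step is the crux and is purely combinatorial, so I would isolate it: if $\preceq$ is a partial order on a finite set $N$ with $|N|=n$ and $\sigma\in\mathrm{Sym}(N)$ is an involution with at most one fixed point satisfying $x\prec y\Leftrightarrow\sigma(y)\prec\sigma(x)$, then $\preceq$ admits a linear extension $q$ with $x>_qy\Leftrightarrow\sigma(y)>_q\sigma(x)$. I would prove this by induction on $n$; the cases $n\leq 1$ are trivial (then $\sigma=id$ and there is nothing to reverse). For $n\geq 2$ one first observes that there is a $\preceq$-minimal element $m$ with $\sigma(m)\neq m$: otherwise the unique $\preceq$-minimal element would be the fixed point of $\sigma$, hence also $\preceq$-maximal since $\sigma$ is order-reversing, hence incomparable to everything, which forces a second $\preceq$-minimal element when $n\geq 2$ --- a contradiction. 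Then $\sigma(m)$ is $\preceq$-maximal, $\{m,\sigma(m)\}$ is a $\sigma$-orbit, and restricting $\preceq$ and $\sigma$ to $N'=N\setminus\{m,\sigma(m)\}$ gives a smaller instance ($\sigma|_{N'}$ is again an involution with at most one fixed point). The inductive hypothesis yields a linear extension $q'$ of $\preceq|_{N'}$ reversed by $\sigma|_{N'}$, and putting $m$ at the bottom and $\sigma(m)$ at the top of $q'$ produces the required $q$: one verifies directly that $q$ extends $\preceq$ and that the rank of each element in $q$ equals $n+1$ minus the rank of its $\sigma$-image, which is equivalent to $\sigma$ being order-reversing for $q$. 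The main obstacle is exactly getting this induction, and the bookkeeping around the fixed point when $n$ is odd, correct; everything else reduces to \eqref{psiR}, \eqref{ro}, \eqref{action-e}, Lemma \ref{c-nu}, and Proposition \ref{n-an-nu}.
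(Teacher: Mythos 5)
Your proof is correct, and the overall reduction (the ``only if'' direction via Theorem \ref{main}; the ``if'' direction via Proposition \ref{n-an-nu} and the dichotomy on $\mathrm{Stab}_U(p)$ from the proof of Theorem \ref{main}) coincides with the paper's. Where you genuinely diverge is in the hard step, producing an element of $S^U_2(p)$ when some $(\varphi_*,\psi_*,\rho_0)$ stabilizes $p$. Both arguments rest on the same two preparatory facts --- that $\Sigma_{\nu(p)}(p)$ is anti-invariant under $\psi_*$ (the paper's Lemma \ref{nupsi}) and that its transitive closure is an asymmetric, transitive, still anti-invariant relation (Lemma \ref{asymmetry}) --- but the paper then builds $q$ explicitly: it isolates the set $\Gamma$ of alternatives forced into the top half, completes it to a transversal $M$ of the two-element $\psi_*$-orbits via an auxiliary linear order $f$ on a set $T$, writes down the vector \eqref{q-magic}, and verifies $q\in C_{\nu(p)}(p)$ through the case analysis a)--h). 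You instead extract a clean, self-contained order-theoretic lemma (a finite partial order anti-invariant under an involution with at most one fixed point admits a linear extension reversed by that involution) and prove it by peeling off a minimal element $m$ together with $\sigma(m)$; your argument that such an $m$ with $\sigma(m)\neq m$ exists, and the rank-complementarity check after reassembly, are both sound. Your route is shorter and avoids the eight-case verification, and the lemma has independent interest; the paper's route pays for its length with an explicit formula for a member of $S^U_2(p)$, which feeds directly into the computations of Section \ref{example}. Two small points to tighten: the anti-invariance of $\Sigma_{\nu(p)}(p)$ is most cleanly obtained by the direct counting argument of Lemma \ref{nupsi} (using that $\varphi$ permutes $H$ and $p_{\varphi(i)}=\psi_* p_i\rho_0$) rather than by appeal to the computation inside Lemma \ref{c-nu}, whose statement concerns $C_\nu$ rather than $\Sigma_\nu$; and you should fix the orientation convention so that ``$\preceq$-minimal'' matches ``placed at the bottom of $q$''. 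Neither affects correctness.
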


\section{Regular groups}\label{char}

 Due to Theorems \ref{main} and \ref{mainmain}, it is important to find some simple criteria to check whether a group is regular or not. In this section we characterize those groups via two properties that, as shown in Section \ref{sub-sub}, are simple to verify in some remarkable situations. To present that characterization result, we need to say something more about permutations.
\subsection{Orbits and types of permutations}

Fix $\sigma\in \mathrm{Sym}(X)$. For every $x\in X$, the $\sigma$-orbit of $x$ is defined as $x^{\langle\sigma\rangle}=\{\sigma^m(x)\in X: m\in \mathbb{N}\}.$
It is well known that $|x^{\langle\sigma\rangle}|=s$ if and only if $s=\min\{m\in\mathbb{N}: \sigma^m(x)=x\}$. 
 The set $O(\sigma)=\{x^{\langle\sigma\rangle}: x\in X\}$ of the $\sigma$-orbits is a partition of $X$, and we denote its order by $r(\sigma)$. A system of representatives of  the $\sigma$-orbits is a vector $(x_j)_{j=1}^{r(\sigma)}\in X^{r(\sigma)}$ such that $O(\sigma)=\{x_1^{\langle\sigma\rangle},\dots, x_{r(\sigma)}^{\langle\sigma\rangle}\}$. Note that
\begin{equation}\label{benpostoi}
X=\left\{\sigma^m(x_j)\in X: m\in\mathbb{N},\, j\in\{1,\dots,r(\sigma)\} \right\},
\end{equation}
and
\begin{equation}\label{benpostoii}
\begin{array}{c}
\mbox{for every }m,\ell\in\mathbb{N},\mbox{ and }j_1,j_2\in\{1,\dots, r(\sigma) \},\\
\sigma^m(x_{j_1})=\sigma^{\ell}(x_{j_2})\mbox{  if and only if }j_1=j_2\mbox{ and }|x_{j_1}^{\langle\sigma\rangle}| \mbox{ divides }\ell-m.
\end{array}
\end{equation}
A system of representatives of the $\sigma$-orbits $(x_j)_{j=1}^{r(\sigma)}\in X^{r(\sigma)}$ is called ordered if $|x_{j_1}^{\langle\sigma\rangle}|\geq |x_{j_2}^{\langle\sigma\rangle}|$ for all $j_1,j_2\in\{1,\dots,r(\sigma)\}$ with $j_1\le j_2.$

Given $k\in \mathbb{N}$, the set of partitions of $k$ is
\[
\mbox{$\Pi(k)=\bigcup_{r=1}^k\big\{\lambda=(\lambda_{j})_{j=1}^r\in\mathbb{N}^r: \sum_{j=1}^r\lambda_j=k, \lambda_1\ge \ldots\ge \lambda_r\big\}$}.
\]
Let us consider now the well known surjective function
\[
\mbox{$T:\bigcup_{k\in\mathbb{N}}S_k\to \bigcup_{k\in\mathbb{N}}\Pi(k)$},\quad \sigma\mapsto T(\sigma)=(T_j(\sigma))_{j=1}^{r(\sigma)}=(|x_j^{\langle\sigma\rangle}|)_{j=1}^{r(\sigma)},
\]
where $(x_j)_{j=1}^{r(\sigma)}$ is any ordered system of representatives of the $\sigma$-orbits. Note that $T$ is well defined since $T(\sigma)$, called the  type of $\sigma$, does not depend on the particular ordered system of representatives of the $\sigma$-orbits chosen.
Note also that if $\sigma\in S_k$, then $T(\sigma)$ belongs to $\Pi(k)$. Moreover, the number of  components equal to $1$ in the vector $T(\sigma)$ is equal to the number of fixed points of $\sigma$, and $|\sigma|= \mathrm{lcm }(T(\sigma)).$ For instance, if $\sigma=(123)(456)(78)\in S_9$, then $r(\sigma)=4$, an ordered system of representatives of the $\sigma$-orbits is $(1,4,7,9)\in\{1,\ldots,9\}^4$, the type of $\sigma$ is $T(\sigma)=(3,3,2,1)\in \Pi(9)$, and $|\sigma|=\mathrm{lcm }(3,3,2,1)=6.$

The  theoretic importance of the concept of type relies on the fact that two permutations are conjugate if and only if they have the same type. Looking at the specific purposes of the paper, we are going to see how checking the regularity of a group $U\le G$ reduces to check, for every $(\varphi,\psi,\rho)\in U$, some arithmetical properties of the order of $\psi$ and the type of $\varphi.$

Given  a prime $\pi$ and $\sigma\in\bigcup_{k\in\mathbb{N}}S_k$, we set $|\sigma|_\pi=\max \{\pi^a: a\in\mathbb{N}\cup\{0\},\   \pi^a\mid |\sigma|\}$.

\subsection{Characterization of regular groups}

\begin{theorem}\label{Regular}
Let $U\le G$. $U$ is regular if and only if the two following conditions are satisfied:
\begin{itemize}
\item[a)]  if $(\varphi,\psi,id)\in U$ is such that $\psi\neq id$ and $\pi$ is a prime with $|\psi|_\pi=\pi^a$ for some $a\in \mathbb{N},$ then $\pi^a\nmid \gcd(T(\varphi))$;
\item[b)]  if $(\varphi,\psi,\rho_0)\in U$  is such that $\psi^2=id$ and $\psi$ is not a conjugate of $\rho_0$, then $2\nmid \gcd(T(\varphi)).$
\end{itemize}

\end{theorem}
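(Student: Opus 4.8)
The plan is to first replace the geometric condition \eqref{property} by two cleaner requirements on stabilizers. Fix $p\in\mathcal{P}$. I claim \eqref{property} holds for $p$ if and only if: (R1) every $(\varphi,\psi,id)\in\mathrm{Stab}_U(p)$ has $\psi=id$; and (R2) every $(\varphi,\psi,\rho_0)\in\mathrm{Stab}_U(p)$ has $\psi$ conjugate to $\rho_0$. Indeed, assuming (R1) and (R2): if $\mathrm{Stab}_U(p)$ contains no element with third coordinate $\rho_0$ take $\psi_*=\rho_0$ and use (R1); if it contains some $(\varphi_0,\psi_0,\rho_0)$, then for any other $(\varphi,\psi,\rho_0)\in\mathrm{Stab}_U(p)$ the product $(\varphi\varphi_0^{-1},\psi\psi_0^{-1},id)$ lies in $\mathrm{Stab}_U(p)$, so $\psi=\psi_0=:\psi_*$ by (R1), and $\psi_*$ is conjugate to $\rho_0$ by (R2), whence \eqref{property} holds with this $\psi_*$. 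The converse implication is immediate. Throughout I will use the free left action fact that $\psi_1 q=\psi_2 q$ in $\mathcal{L}(N)$ forces $\psi_1=\psi_2$, together with the identification of $\mathcal{L}(N)$ with $S_n$, and the fact that $(\varphi,\psi,\rho)\in\mathrm{Stab}_U(p)$ means $p_{\varphi(i)}=\psi p_i\rho$ for every $i\in H$.

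\textbf{Forward direction, by contraposition.} Suppose a) fails: there are $(\varphi,\psi,id)\in U$, a prime $\pi$ and $a\ge 1$ with $|\psi|_\pi=\pi^a$ and $\pi^a\mid\gcd(T(\varphi))$. Put $m=|\psi|/\pi^a$, which is coprime to $\pi$, and $\psi'=\psi^m$, so $(\varphi^m,\psi',id)=(\varphi,\psi,id)^m\in U$ and $|\psi'|=\pi^a>1$. Since every $\varphi$-orbit has length $d=\pi^a e$, it splits into $\gcd(e,m)$ orbits of $\varphi^m$ each of length $\pi^a e/\gcd(e,m)$, a multiple of $\pi^a=|\psi'|$; hence one can define a profile $p$ orbit by orbit by fixing $p_{i_0}$ on each $\varphi^m$-orbit representative $i_0$ and setting $p_{(\varphi^m)^k(i_0)}=(\psi')^k p_{i_0}$, which is well defined precisely because the orbit lengths are divisible by $|\psi'|$. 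Then $(\varphi^m,\psi',id)\in\mathrm{Stab}_U(p)$ with $\psi'\ne id$, violating (R1), so $U$ is not regular. If instead b) fails, there are $(\varphi,\psi,\rho_0)\in U$ with $\psi^2=id$, $\psi$ not conjugate to $\rho_0$, and every $\varphi$-orbit of even length; build $p$ orbit by orbit via $p_{\varphi^k(i_0)}=\psi^k p_{i_0}\rho_0^k$ (consistent since the orbit lengths are even and $\psi^2=\rho_0^2=id$), obtaining $(\varphi,\psi,\rho_0)\in\mathrm{Stab}_U(p)$ and violating (R2). Again $U$ is not regular.

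\textbf{Converse direction.} Assume a) and b) and fix $p$. For (R1): if $(\varphi,\psi,id)\in\mathrm{Stab}_U(p)$, iterating $p_{\varphi(i)}=\psi p_i$ around a $\varphi$-orbit of length $d$ yields $\psi^d=id$, so $|\psi|$ divides every entry of $T(\varphi)$, i.e. $|\psi|\mid\gcd(T(\varphi))$; if $\psi\ne id$, pick a prime $\pi\mid|\psi|$ with $|\psi|_\pi=\pi^a$, $a\ge1$, and note $\pi^a\mid\gcd(T(\varphi))$, contradicting a); hence $\psi=id$. For (R2): if $(\varphi,\psi,\rho_0)\in\mathrm{Stab}_U(p)$, then $(\varphi,\psi,\rho_0)^2=(\varphi^2,\psi^2,id)\in\mathrm{Stab}_U(p)$, so $\psi^2=id$ by (R1); if $\psi$ were not conjugate to $\rho_0$, then by b) some $\varphi$-orbit has odd length $d$, and iterating $p_{\varphi(i)}=\psi p_i\rho_0$ around it gives $p_{i_0}=\psi^d p_{i_0}\rho_0^d=\psi p_{i_0}\rho_0$, i.e. $\psi=p_{i_0}\rho_0 p_{i_0}^{-1}$, a conjugate of $\rho_0$, a contradiction. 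Thus (R1) and (R2) hold for every $p$, so $U$ is regular.

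\textbf{Expected obstacle.} The only delicate point is the forward direction of a): one has to replace $\psi$ by its $\pi$-part $\psi^m$, verify that the $\varphi^m$-orbit lengths remain divisible by $\pi^a$, and perform the orbit-by-orbit construction of $p$ so that it is genuinely well defined on each orbit. The b) direction is easier, since $2\mid\gcd(T(\varphi))$ directly supplies even orbits, and the converse is a short argument once (R1) and (R2) are isolated; all remaining steps are routine manipulations in $S_n$ combined with the subgroup property of stabilizers.
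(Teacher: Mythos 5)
Your proof is correct and follows essentially the same route as the paper: the same orbit-by-orbit profile constructions for the two contrapositive directions, the same key observation that $(\varphi,\psi,id)\in\mathrm{Stab}_U(p)$ forces $|\psi|\mid\gcd(T(\varphi))$, and the same odd-orbit conjugacy argument for the $\rho_0$-case. Your converse for condition a) is in fact slightly more direct than the paper's (which detours through the power $g^m$ and the divisibility $\gcd(T(\varphi^m))\mid\gcd(T(\varphi))$), but this is a streamlining rather than a different method.
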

\begin{proof}
We first prove that conditions a) and b) are necessary for the regularity of $U.$

We begin showing that if a) does not hold, then $U$ is not regular.
By assumption there exist $g=(\varphi,\psi,id)\in U$ with $\psi\neq id$, a prime  $\pi$ and $a\in \mathbb{N}$ such that $\pi^a=|\psi|_\pi\mid  \gcd(T(\varphi)),$  which means that each $\varphi$-orbit has order divisible by $\pi^a.$
Consider the positive integer $m=|\psi|/\pi^a$ and let  $g^m=(\varphi^m,\psi^m,id)\in U.$ Set  $g^m=\hat{g}$,  $\varphi^m=\hat{\varphi}$ and $\psi^m=\hat{\psi}$. Since $\pi\nmid m$, each $\hat{\varphi}$-orbit has order divisible by $\pi^a,$ that is, $\pi^a\mid \gcd (T(\hat{\varphi}));$ moreover, by construction, $|\hat{\psi}|=\pi^a>1.$
 Let $(i_j)_{j=1}^{r(\hat{\varphi})}$ be an ordered system of representatives for the $\hat{\varphi}$-orbits, so that, by \eqref{benpostoi}, $H=\{\hat{\varphi}^k(i_j) : k\in\mathbb{N}, j\in\{1,\dots,r(\hat{\varphi})\} \}$. Let $p\in\mathcal{P}$ be defined by  $p_{\hat{\varphi}^k(i_j)}=\hat{\psi}^{k}$  for all $j\in\{1,\dots,r(\hat{\varphi})\}$ and $k\in\mathbb{N}$.  We show that the definition of $p$ is well posed. Let $\hat{\varphi}^k(i_{j_1})=\hat{\varphi}^{\ell}(i_{j_2}),$ for
  some $k,\ell\in\mathbb{N}$ and $j_1,j_2\in \{1,\dots,r(\hat{\varphi})\}$. Then, by \eqref{benpostoii}, we have $j_1=j_2$ and
 $T_{j_1}(\hat{\varphi}) \mid \ell-k$. So we also have $\pi^a\mid \ell-k$ and, since $|\hat{\psi}|=\pi^a$, we finally obtain
$\hat{\psi}^{k}=\hat{\psi}^{\ell}.$ We now see that $p^{\hat{g}}=p,$ that is,  for every
$i\in H, $ $p_{\hat{\varphi}(i)}=\hat{\psi} p_i.$ Pick $i\in H$; then there exist $j\in \{1,\dots,r(\hat{\varphi})\}$ and $k\in \mathbb{N}$ such that $i=\hat{\varphi}^k(i_j)$ and thus $p_{\hat{\varphi}(i)}=p_{\hat{\varphi}(\hat{\varphi}^k(i_j))}=p_{\hat{\varphi}^{k+1}(i_j)}= \hat{\psi}^{k+1}=\hat{\psi } p_{\hat{\varphi}^{k}(i_j)}=\hat{\psi} p_i$.
So we have $( \hat{\varphi},\hat{\psi},id)\in \mathrm{Stab}_U(p),$ with $\hat{\psi}\neq id,$ which implies that $U$ is not regular.

Next we show that if  b) does not hold, then  $U$ is not regular.
By assumption there exists $g=(\varphi,\psi,\rho_0)\in U$ with $\psi^2=id$ and $\psi$ not a conjugate of $\rho_0$ such that $2\mid \gcd(T(\varphi)).$  Let $(i_j)_{j=1}^{r(\varphi)}$ be an ordered system of representatives for the $\varphi$-orbits, so that, by \eqref{benpostoi},
$H=\{\varphi^k(i_j) : k\in\mathbb{N},j\in\{1,\dots,r(\varphi)\} \}$.
 Let $p\in\mathcal{P}$ be defined by $p_{\varphi^k(i_j)}=\psi^{k}\rho_0^{k},$ for all $j\in\{1,\dots,r(\varphi)\}$ and $k\in\mathbb{N}.$ Since $\psi^2=\rho_0^2=id,$ this  simply means $p_{\varphi^k(i_j)}=id$  for $k$ even and  $p_{\varphi^k(i_j)}=\psi\rho_0$  for $k$ odd. We show that the definition of $p$ is well posed. Let $\varphi^k(i_{j_1})=\varphi^{\ell}(i_{j_2}),$ for some $j_1, j_2\in \{1,\dots,r(\varphi)\}$ and some $k,\ell\in\mathbb{N}$. Then, by \eqref{benpostoii},
 $j_1=j_2$ and $T_{j_1}(\varphi) \mid \ell-k$, which implies $2\mid \ell-k$ and therefore
$\psi^{k}\rho_0^{k}=\psi^{\ell}\rho_0^{\ell}.$
We claim that $p^g=p$, that is,  for every
$i\in H, $ $p_{\varphi(i)}=\psi p_i\rho_0.$ Pick $i\in H$; then there exist $j\in \{1,\dots,r(\varphi)\}$ and $k\in \mathbb{N}$ such that $i=\varphi^k(i_j)$, and thus $p_{\varphi(i)}=p_{\varphi(\varphi^k(i_j))}=p_{\varphi^{k+1}(i_j)}= \psi^{k+1}\rho_0^{k+1}=\psi p_{\varphi^{k}(i_j)} \rho_0=\psi p_i\rho_0$.
So we have $(\varphi,\psi,\rho_0)\in \mathrm{Stab}_U(p),$ with $\psi$ not conjugate to $\rho_0$ and thus $U$ is not regular.

Let us prove now that conditions a) and b) are sufficient for the regularity of $U.$  First of all, we show that, for every $p\in\mathcal{P}$,
\begin{equation}\label{egly}
(\varphi,\psi,id)\in \mathrm{Stab}_U(p)\ \Rightarrow\  |\psi|\mid \gcd(T(\varphi)).
\end{equation}
 Namely, from $p^{(\varphi,\psi,id)}=p$ we get $p_{\varphi(i)}=\psi p_i$ for all $i\in H$ and thus also $p_{\varphi^k(i)}=\psi^k p_i$ for all $k\in \mathbb{N}$ and all $i\in H$. Let
 $(i_j)_{j=1}^{r(\varphi)}$ be an ordered system of representatives for the $\varphi$-orbits. Then $\varphi^{T_j(\varphi)}(i_j)=i_j$ and so $p_{i_j}=p_{\varphi^{T_j(\varphi)}(i_j)}=\psi^{T_j(\varphi)} p_{i_j},$ which says $\psi^{T_j(\varphi)}=id$. Therefore, for every $j\in \{1,\dots,r(\varphi)\}$, $|\psi|\mid T_j(\varphi)$, that is, $|\psi|\mid \gcd(T(\varphi)).$

Let now $p\in\mathcal{P}$ be fixed. In order to get the regularity of $U$, we first prove that, for every $(\varphi,\psi,id)\in \mathrm{Stab}_U(p)$, we have $\psi=id$. Consider then $g=(\varphi,\psi,id)\in \mathrm{Stab}_U(p)$ and assume by contradiction that $\psi\neq id$. Thus, there exists at least one prime $\pi$ with $|\psi|_\pi>1,$ say $|\psi|_\pi=\pi^a$ for some $a\in \mathbb{N}.$ Moreover, there exists $m\in \mathbb{N}$ such that $\pi\nmid m$ and $|\psi^m|=\pi^a$. Since $\mathrm{Stab}_U(p)$ is a subgroup of $U$, we have that $g^m=(\varphi^m,\psi^m,id)\in \mathrm{Stab}_U(p).$ Thus, by \eqref{egly}, $\pi^a=|\psi^m|\mid \gcd(T(\varphi^m)).$ Yet, it is easily observed that $ \gcd(T(\varphi^m))\mid  \gcd(T(\varphi))$ and so we also have  $\pi^a  \mid \gcd(T(\varphi))$, against condition a).
We finally need to show that there exists $\psi_*\in S_n$ conjugate to $\rho_0$ such that, for every $(\varphi,\psi,\rho_0)\in \mathrm{Stab}_U(p)$, $\psi=\psi_*$. Consider $g=(\varphi,\psi,\rho_0)\in \mathrm{Stab}_U(p)$ and first prove that $\psi$ is a conjugate of $\rho_0$. Note that $g^2=(\varphi^2,\psi^2,id)\in \mathrm{Stab}_U(p)$ and thus, by the previous case, $\psi^2=id$. Assume, by contradiction, that $\psi$ is not a conjugate of $\rho_0$. Since $g\in \mathrm{Stab}_U(p)\le U,$ we have that $p_{\varphi^k(i)}=\psi^k p_i\rho_0^k$ for all $k\in \mathbb{N}$ and all $i\in H$. Due to $\psi^2=\rho_0^2=id,$ that means $p_{\varphi^k(i)}=p_i$ for $k$ even and  $p_{\varphi^k(i)}=\psi p_i\rho_0$ for $k$ odd.
If there exists a $\varphi$-orbit $i^{\langle \varphi\rangle}$ of odd order $k$, then we have $p_i=p_{\varphi^k(i)}=\psi p_i\rho_0$ and therefore $\psi=p_i\rho_0p_i^{-1}$ is a conjugate of $\rho_0,$ against our assumption. So, for every $j\in \{1,\dots,r(\varphi)\},$ $T_j(\varphi)$ is even, which contradicts condition b).
Thus, we are left with proving that if $g=(\varphi,\psi,\rho_0),\  g'=(\varphi',\psi',\rho_0)\in \mathrm{Stab}_U(p)$ then $\psi=\psi'.$ This is immediately done noting that $gg'^{-1}=(\varphi\varphi'^{-1},\psi\psi'^{-1},id)\in \mathrm{Stab}_U(p)$ which, as already proved, implies $\psi\psi'^{-1}=id.$
\end{proof}

\section{Subcommittees and subclasses}\label{sub-sub}

In this section we focus on rules that are anonymous with respect to subcommittees, neutral with respect to subclasses and reversal symmetric.
To begin with, let us formalize those versions of the principles of anonymity and neutrality  in terms of $U$-symmetry.

Given $B=\{B_j\}_{j=1}^s$ a partition of $H$, we define
\[
V(B)=\left\{\varphi\in S_h : \varphi(B_j)=B_j\hbox{ for all } j\in\{1,\dots,s\} \right\},
\]
and given $C=\{C_k\}_{k=1}^t$ a partition of $N$, we define
\[
W(C)=\left\{\psi\in S_n :  \psi(C_k)=C_k \hbox{ for all }  k\in\{1,\dots,t\} \right\}.
\]
Note that $V(B)$ is a subgroup of $S_h$ and $W(C)$ is a subgroup of $S_n$.
Moreover,  $V(\{H\})=S_h$ and $W(\{N\})=S_n$.

 A rule is said to be anonymous with respect to a partition $B$ of $H$, briefly {\it $B$-anonymous}, if it is $V(B)\times \{id\}\times \{id\}$-symmetric.
A rule is said to be neutral with respect to a partition $C$ of $N$, briefly {\it $C$-neutral}, if it is $ \{id\}\times W(C)\times\{id\}$-symmetric.  Thus, referring to the discussion carried on in the introduction, if $B$ is interpreted as  the set of subcommittees in which $H$ is divided, then $B$-anonymous rules are those rules which do not distinguish among individuals belonging to the same subcommittee. Analogously, interpreting $C$ as the set of subclasses in which $N$ is divided, we have that $C$-neutral rules are those rules equally treating alternatives within each subclass.
Note also that, because of Proposition \ref{U,V}, a rule is $B$-anonymous and $C$-neutral if and only if it is  $V(B)\times W(C)\times \{id\}\,$-symmetric. Similarly,  a rule is $B$-anonymous, $C$-neutral and reversal symmetric if and only if it is  $V(B)\times W(C)\times \Omega\,$-symmetric.

Using Theorems \ref{mainmain} and \ref{Regular}, we can now prove the main result of the paper, that is, Theorem \ref{comm} below. It provides simple tests to check whether, given a partition of individuals into subcommittees and a partition of alternatives into subclasses, there exists a minimal majority rule which is anonymous with respect to the considered subcommittees, neutral with respect to the considered subclasses and possibly reversal symmetric.

\begin{lemma}\label{propertyU} Let $B=\{B_j\}_{j=1}^s$ be a partition of $H$ and $C=\{C_k\}_{k=1}^t$ be a partition of $N$. Then, for every $(\varphi,\psi,\rho)\in  V(B)\times W(C)\times \Omega$, we have
\[
\gcd(T(\varphi))\mid \gcd(|B_j|)_{j=1}^s\quad \mbox{and}\quad |\psi|\mid \mathrm{lcm}(|C_k|!)_{k=1}^t.
\]
\end{lemma}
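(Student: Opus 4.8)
The plan is to prove the two divisibility claims separately, each by the same mechanism: a permutation that preserves a partition into blocks must act on each block as a permutation of that block, and its cycle structure is thereby constrained by the block sizes.

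For the first claim, I would fix $(\varphi,\psi,\rho)\in V(B)\times W(C)\times\Omega$ and work only with $\varphi\in V(B)$. Since $\varphi(B_j)=B_j$ for every $j$, each $\varphi$-orbit is entirely contained in a single block $B_j$; hence each $\varphi$-orbit has order at most $|B_j|$ for the block $B_j$ containing it. More precisely, I would show that every $\varphi$-orbit length divides $|B_j|$ for the relevant $j$: the restriction $\varphi|_{B_j}$ is a permutation of $B_j$, so its order divides $|B_j|!$; but actually the sharper and correct statement is that the length of any $\varphi|_{B_j}$-orbit divides... no — orbit lengths of a permutation of a set of size $m$ need not divide $m$. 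The right route is instead: let $d=\gcd(T(\varphi))$. Then $d$ divides the length of \emph{every} $\varphi$-orbit, and since the $\varphi$-orbits contained in $B_j$ partition $B_j$, their lengths sum to $|B_j|$; as $d$ divides each summand, $d\mid |B_j|$. This holds for every $j$, so $d\mid\gcd(|B_j|)_{j=1}^s$, which is exactly $\gcd(T(\varphi))\mid\gcd(|B_j|)_{j=1}^s$.

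For the second claim, I would argue analogously with $\psi\in W(C)$, but now extracting the order rather than a gcd of orbit lengths. Since $\psi(C_k)=C_k$ for every $k$, the restriction $\psi_k:=\psi|_{C_k}$ lies in $\mathrm{Sym}(C_k)$, and $\psi$ is the ``product'' of these commuting restrictions, so $|\psi|=\mathrm{lcm}(|\psi_k|)_{k=1}^t$. As $\psi_k\in\mathrm{Sym}(C_k)$ and $|\mathrm{Sym}(C_k)|=|C_k|!$, Lagrange's theorem gives $|\psi_k|\mid |C_k|!$. Therefore $|\psi|=\mathrm{lcm}(|\psi_k|)_{k=1}^t$ divides $\mathrm{lcm}(|C_k|!)_{k=1}^t$, as claimed. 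The component $\rho\in\Omega$ plays no role in either statement and can simply be ignored.

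The only mildly delicate point — and the one I would be most careful to state cleanly — is the first claim: it is tempting but wrong to say an orbit length divides the block size directly; the correct leverage is that $\gcd(T(\varphi))$ divides \emph{each} orbit length, and the orbit lengths inside a fixed block $B_j$ sum to $|B_j|$, so the common divisor $\gcd(T(\varphi))$ divides the sum $|B_j|$. Everything else is an immediate application of elementary facts about orbits recalled in Section \ref{char} together with Lagrange's theorem.
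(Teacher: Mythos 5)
Your proof is correct and follows essentially the same route as the paper's: the paper likewise observes that each $\varphi$-orbit lies inside some block $B_j$ (leaving implicit the summation argument you spell out, namely that $\gcd(T(\varphi))$ divides each orbit length and the orbit lengths inside $B_j$ sum to $|B_j|$), and it handles $\psi$ by writing it as a product of pairwise commuting permutations $u_k$ supported on the $C_k$ with $|u_k|\mid |C_k|!$ and $|\psi|=\mathrm{lcm}(|u_1|,\dots,|u_t|)$. Your explicit warning about the tempting but false claim that an orbit length divides the block size is well taken; the final write-up should simply omit the exploratory detour and state the summation argument directly.
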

\begin{proof}Let $(\varphi,\psi,\rho)\in V(B)\times W(C)\times \Omega$. Since $\varphi\in V(B)$, each element of $O(\varphi)$ is a subset of a suitable element of $B$, which immediately implies
$
\gcd(T(\varphi))\mid \gcd(|B_j|)_{j=1}^s.
$
On the other hand, since $\psi\in W(C)$,  we have that $\psi=u_1\cdots u_t$ for suitable  pairwise commuting permutations $u_1,\ldots,u_t\in S_n$ such that, for every $k\in\{1,\ldots,t\}$, $u_k$ fixes all the elements in $N\setminus C_k$, so that $|u_k|\mid |C_k|!$. As a consequence, we have that $
|\psi|=\mathrm{lcm}(|u_1|,\dots,|u_t|)$ and then $|\psi|\mid\mathrm{lcm}(|C_k|!)_{k=1}^t.
$
\end{proof}

\begin{theorem}\label{teonuovo}
 Let $B=\{B_j\}_{j=1}^s$ be a partition of $H$ and $C=\{C_k\}_{k=1}^t$ be a partition of $N$. Let $|C_{k^*}|=\max\{ |C_k| \}_{k=1}^t .$  Then:
 \begin{itemize}

 \item[i)] $V(B)\times W(C) \times \{id\}$ is regular if and only if
\begin{equation}\label{gcd2}
\gcd\left(\gcd(|B_j|)_{j=1}^s, \,|C_{k^*}|!\right)=1;
\end{equation}
\item[ii)] $V(B)\times W(C) \times \Omega$ is regular if and only if
\begin{equation}\label{gcd}
\gcd\left(\gcd(|B_j|)_{j=1}^s, \,\mathrm{lcm}(2,|C_{k^*}|!)\right)=1.
\end{equation}
\end{itemize}
\end{theorem}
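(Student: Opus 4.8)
The plan is to apply Theorem~\ref{Regular} to the two subgroups $U_0=V(B)\times W(C)\times\{id\}$ and $U=V(B)\times W(C)\times\Omega$, using Lemma~\ref{propertyU} to bound the quantities $\gcd(T(\varphi))$ and $|\psi|$ that enter conditions a) and b). Two elementary remarks are used throughout: first, $\mathrm{lcm}(|C_k|!)_{k=1}^t=|C_{k^*}|!$, because $|C_k|\le|C_{k^*}|$ forces $|C_k|!\mid|C_{k^*}|!$; second, writing $d=\gcd(|B_j|)_{j=1}^s$, a prime $\pi$ divides $\mathrm{lcm}(2,|C_{k^*}|!)$ exactly when $\pi$ divides $2$ or $|C_{k^*}|!$, so that $\gcd(d,\mathrm{lcm}(2,|C_{k^*}|!))=1$ is equivalent to the conjunction of $\gcd(d,2)=1$ and $\gcd(d,|C_{k^*}|!)=1$.

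For part i), every element of $U_0$ has third coordinate $id$, so condition b) of Theorem~\ref{Regular} is vacuous and $U_0$ is regular if and only if condition a) holds; I would prove that a) is equivalent to \eqref{gcd2}. For the implication \eqref{gcd2}$\Rightarrow$a): given $(\varphi,\psi,id)\in U_0$ with $\psi\neq id$ and a prime $\pi$ with $|\psi|_\pi=\pi^a$ for some $a\ge 1$, Lemma~\ref{propertyU} yields $\pi^a\mid|\psi|\mid|C_{k^*}|!$ and $\gcd(T(\varphi))\mid d$; if moreover $\pi^a\mid\gcd(T(\varphi))$, then $\pi$ divides both $d$ and $|C_{k^*}|!$, contradicting \eqref{gcd2}, so $\pi^a\nmid\gcd(T(\varphi))$. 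For the converse, arguing contrapositively: if \eqref{gcd2} fails, choose a prime $\pi\mid\gcd(d,|C_{k^*}|!)$; since $\pi\le|C_{k^*}|$ there is a $\pi$-cycle $\psi\in W(C)$ supported inside $C_{k^*}$, and since $\pi\mid|B_j|$ for every $j$ one builds $\varphi\in V(B)$ all of whose orbits have order $\pi$, so $\gcd(T(\varphi))=\pi$; then $(\varphi,\psi,id)\in U_0$ violates a), hence $U_0$ is not regular.

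For part ii), Theorem~\ref{Regular} says $U$ is regular if and only if a) and b) both hold. Since a) concerns only the elements of $U$ with third coordinate $id$, which are precisely those of $U_0$, part i) gives a)$\,\Leftrightarrow\,$\eqref{gcd2}. For b), the crucial observation is that $id\in W(C)$ satisfies $id^2=id$ and is not conjugate to $\rho_0$ (because $\rho_0\neq id$, as $n\ge 2$); hence $(\varphi,id,\rho_0)\in U$ for every $\varphi\in V(B)$, and b) is therefore equivalent to the requirement that $2\nmid\gcd(T(\varphi))$ for all $\varphi\in V(B)$. By Lemma~\ref{propertyU} this holds whenever $2\nmid d$; conversely, if $2\mid d$ (all $|B_j|$ even) one produces $\varphi\in V(B)$ with every orbit of order $2$, so $2\mid\gcd(T(\varphi))$ and b) fails. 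Thus b)$\,\Leftrightarrow\,\gcd(d,2)=1$, and combining with a) and the second elementary remark above yields that $U$ is regular if and only if \eqref{gcd}.

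The genuinely routine parts are the two explicit permutation constructions (products of $\pi$-cycles, respectively $2$-cycles, within each $B_j$; a single $\pi$-cycle within $C_{k^*}$) and the arithmetic of the final $\gcd$/$\mathrm{lcm}$ reduction. The one point I would flag as requiring care is the treatment of condition b): it is tempting to expect that b) depends on which involutions of $W(C)$ fail to be conjugate to $\rho_0$, but since $id$ is always such an involution, b) collapses to a statement about $V(B)$ alone --- and this is exactly why passing from \eqref{gcd2} to \eqref{gcd} only introduces the prime $2$ and nothing further coming from the partition $C$.
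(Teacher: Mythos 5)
Your proof is correct and follows essentially the same route as the paper's: both reduce the statement to conditions a) and b) of Theorem~\ref{Regular}, use Lemma~\ref{propertyU} for the sufficiency direction, and exhibit the same witnesses for necessity (a $\pi$-cycle supported in $C_{k^*}$ paired with a $\varphi\in V(B)$ all of whose orbit lengths are divisible by $\pi$, and the element $(\varphi,id,\rho_0)$ with $id$ not conjugate to $\rho_0$). The only difference is organizational --- you prove i) first and observe that condition b) collapses to a statement about $V(B)$ alone, whereas the paper proves ii) first and deduces i) via a case split on $|C_{k^*}|$ --- and both orderings are sound.
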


\begin{proof}Let $U_1=V(B)\times W(C)\times \{id\}$ and $U_2=V(B)\times W(C)\times \Omega$.

We first prove statement ii).
Assume that condition \eqref{gcd} holds true and show that $U_2$ is regular.  By Theorem \ref{Regular}, we need to show that conditions a) and b) are satisfied. Let $(\varphi,\psi,id)\in U_2$ with $\psi\neq id$ and $\pi$ be a prime such that $|\psi|_\pi=\pi^a$ for some $a\in \mathbb{N}$. By contradiction, assume that
 $\pi^a\mid \gcd(T(\varphi)).$ By Lemma \ref{propertyU}, $\gcd(T(\varphi))\mid \gcd(|B_j|)_{j=1}^s$ and  $|\psi|\mid \mathrm{lcm}(|C_k|!)_{k=1}^t=|C_{k^*}|!$. In particular,  
$\pi\mid \gcd(|B_j|)_{j=1}^s$ and $\pi\mid |C_{k^*}|!$, so that $\pi\mid \gcd\left(\gcd(|B_j|)_{j=1}^s, \,\mathrm{lcm}(2,|C_{k^*}|!)\right)=1$ and the contradiction is found. Let now $(\varphi,\psi,\rho_0)\in U$  with $\psi^2=id$, $\psi$ not a conjugate of $\rho_0$ and, by contradiction, assume that $2\mid \gcd(T(\varphi)).$ Then by Lemma \ref{propertyU}, we get $2\mid \gcd(|B_j|)_{j=1}^s.$ Therefore, $2\mid \gcd\left(\gcd(|B_j|)_{j=1}^s, \,\mathrm{lcm}(2,|C_{k^*}|!)\right)=1$, a  contradiction.

Assume next that there exists a prime $\pi$ such that $\pi\mid \gcd\left(\gcd(|B_j|)_{j=1}^s, \,\mathrm{lcm}(2,|C_{k^*}|!)\right),$ and show that $U_2$ is not regular. Note that $\pi\mid |B_j|$ for all $j\in\{1,\ldots,s\}$.
If $|C_{k^*}|=1$, then $\pi=2$ and we show that condition b) in Theorem \ref{Regular}, fails. Indeed, choose $\varphi\in S_h$  cyclically permuting all the elements in $B_j$ for all $j\in\{1,\ldots,s\}$,
and consider $(\varphi, id,\rho_0)\in U_2$. We have that $id^2=id$ and $id$ is not conjugate of $\rho_0,$ because $|\rho_0|=2$. However, by the definition of $\varphi$, $2\mid \gcd(T(\varphi))=\gcd(|B_j|)_{j=1}^s.$ If $|C_{k^*}|\ge 2$, then $\pi \mid |C_{k^*}|!$, that is, $\pi \le |C_{k^*}|$.
We show that condition a) in Theorem \ref{Regular}, fails. Choose $\varphi\in S_h$ cyclically permuting all the elements in $B_j$ for all $j\in\{1,\ldots,s\}$, and  $\psi\in S_n$  acting as a cycle of length $\pi$ on the set $C_{k^*}$ and leaving fixed any other element in $N$. Clearly, $\psi(C_{k})=C_{k}$ for all $k\in\{1,\dots,t\}$, so that $(\varphi,\psi,id)\in U_2$ and $\pi=|\psi|=|\psi|_\pi\mid \gcd(T(\varphi))=\gcd(|B_j|)_{j=1}^s.$

We now prove statement i). Assume that condition \eqref{gcd2} holds true and show that $U_1$ is regular. If $|C_{k^*}|=1$, then $U_1\le S_h\times\{id\}\times \{id\}$, so that $U_1$ is regular. If instead $|C_{k^*}|\ge 2$, then \eqref{gcd2} implies \eqref{gcd}, and so, by ii), $U_2$ is regular. Since $U_1\le U_2$, $U_1$ is regular too.

Assume next that there exists a prime $\pi$ such that $\pi\mid \gcd\left(\gcd(|B_j|)_{j=1}^s, \,|C_{k^*}|!\right),$ and show that $U_1$ is not regular. Note that $\pi\mid |B_j|$ for all $j\in\{1,\ldots,s\}$, and  $\pi \mid |C_{k^*}|!$, that is, $\pi \le |C_{k^*}|$. Thus, the same argument used to conclude the proof of statement ii) shows that  condition a) in Theorem \ref{Regular} fails.
\end{proof}

\begin{theorem}\label{comm}  Let $B=\{B_j\}_{j=1}^s$ be a partition of $H$ and $C=\{C_k\}_{k=1}^t$ be a partition of $N$. Then:
\begin{itemize}
	\item[i)] $\mathcal{F}^{V(B)\times W(C)\times \{id\}}_{\min}\neq \varnothing$ if and only if \eqref{gcd2} holds true;
	\item[ii)]$\mathcal{F}^{V(B)\times W(C)\times \Omega}_{\min}\neq \varnothing$ if and only if \eqref{gcd} holds true.
\end{itemize}
\end{theorem}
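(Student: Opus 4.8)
The plan is to deduce Theorem \ref{comm} immediately from the two results already in place: the existence criterion of Theorem \ref{mainmain} and the arithmetic characterization of regularity in Theorem \ref{teonuovo}. First I would set $U_1=V(B)\times W(C)\times \{id\}$ and $U_2=V(B)\times W(C)\times \Omega$ and observe that, since $V(B)\le S_h$ and $W(C),\Omega\le S_n$, both $U_1$ and $U_2$ are subgroups of $G=S_h\times S_n\times\Omega$; hence the whole machinery of Section \ref{ex-res}, and in particular Theorem \ref{mainmain}, applies to them.

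For part i), Theorem \ref{mainmain} applied to $U=U_1$ gives that $\mathcal{F}^{U_1}_{\min}\neq\varnothing$ if and only if $U_1$ is regular, while Theorem \ref{teonuovo}(i) gives that $U_1$ is regular if and only if \eqref{gcd2} holds; chaining the two equivalences proves i). Part ii) is identical with $U_2$ in place of $U_1$: Theorem \ref{mainmain} applied to $U=U_2$ gives $\mathcal{F}^{U_2}_{\min}\neq\varnothing$ if and only if $U_2$ is regular, and Theorem \ref{teonuovo}(ii) gives that $U_2$ is regular if and only if \eqref{gcd} holds.

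Thus there is no genuine obstacle at this final step — the difficulty has been front-loaded into the earlier results. If one wants to locate where the real work lies, it is in the nontrivial implication of Theorem \ref{mainmain} (regularity of $U$ forces $\mathcal{F}^U_{\min}\neq\varnothing$), whose technical proof is deferred to Section \ref{appendix} and rests on Proposition \ref{n-an-nu}, and in Theorem \ref{teonuovo}, which itself relies on the characterization of regular groups in Theorem \ref{Regular} together with the divisibilities $\gcd(T(\varphi))\mid \gcd(|B_j|)_{j=1}^s$ and $|\psi|\mid \mathrm{lcm}(|C_k|!)_{k=1}^t$ of Lemma \ref{propertyU}. Reading off $b_j=|B_j|$ and $c_k=|C_k|$, and using $\mathrm{lcm}(c_1!,\dots,c_t!)=|C_{k^*}|!$ and $\mathrm{lcm}(2,c_1!,\dots,c_t!)=\mathrm{lcm}(2,|C_{k^*}|!)$, then recovers Theorem A.
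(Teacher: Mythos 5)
Your proposal is correct and is exactly the paper's own proof: the authors dispose of Theorem \ref{comm} with the single line ``Apply Theorems \ref{mainmain} and \ref{teonuovo},'' which is the chaining of equivalences you describe. Your additional remarks correctly locate where the substantive work lies (Theorem \ref{mainmain} via Section \ref{appendix}, and Theorem \ref{teonuovo} via Theorem \ref{Regular} and Lemma \ref{propertyU}).
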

\begin{proof} Apply Theorems  \ref{mainmain} and \ref{teonuovo}.
\end{proof}

It is worth noting that $\mathcal{F}^{V(B)\times W(C)\times \{id\}}_{\min}\neq \varnothing$ is equivalent to $\mathcal{F}^{V(B)\times W(C)\times \Omega}_{\min}\neq \varnothing$, provided that $C$ has at least an element which is not a singleton.

We propose now some simple but interesting consequences of Theorem \ref{comm}. Corollary \ref{president} shows that we can always build a neutral and reversal symmetric minimal majority rule that allows all individuals but one to be anonymous. That special type of partial anonymity can be naturally associated with the presence of a president in the committee. Corollary \ref{families} generalizes Theorem 14 in Bubboloni and Gori (2014) to rules also satisfying reversal symmetry.

\begin{corollary}\label{president} Let
$B=\{B_1, B_2\}$ be the partition of $H$ where $B_1=\{1,\dots,h-1\}$ and $B_2=\{h\}.$ Then, for every partition $C$ of $N$, $\mathcal{F}^{V(B)\times W(C)\times \Omega}_{\min}\neq \varnothing$.
\end{corollary}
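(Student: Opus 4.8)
The plan is to simply feed the given partition $B$ into the arithmetic test provided by Theorem~\ref{comm}(ii). By that theorem, $\mathcal{F}^{V(B)\times W(C)\times \Omega}_{\min}\neq \varnothing$ holds if and only if condition \eqref{gcd} is satisfied, namely
\[
\gcd\left(\gcd(|B_j|)_{j=1}^{s}, \,\mathrm{lcm}(2,|C_{k^*}|!)\right)=1,
\]
where $|C_{k^*}|=\max\{|C_k|\}_{k=1}^{t}$. So the only thing to do is to evaluate the left-hand side for our specific $B$.

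First I would record that here $s=2$ with $|B_1|=h-1$ and $|B_2|=1$, so $\gcd(|B_j|)_{j=1}^{s}=\gcd(h-1,1)=1$. Substituting this into \eqref{gcd} gives $\gcd\big(1,\,\mathrm{lcm}(2,|C_{k^*}|!)\big)=1$, which holds no matter what the partition $C$ is, since $1$ is coprime to every positive integer. Hence \eqref{gcd} is satisfied for every partition $C$ of $N$, and Theorem~\ref{comm}(ii) yields $\mathcal{F}^{V(B)\times W(C)\times \Omega}_{\min}\neq \varnothing$.

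There is essentially no obstacle: the statement is an immediate specialization of Theorem~\ref{comm}(ii), the whole point being that singling out one individual into his own subcommittee forces the inner $\gcd$ of subcommittee sizes to be $1$, which trivializes the coprimality requirement irrespective of the subclass structure. If one wished to make the corollary self-contained one could alternatively invoke Theorem~\ref{mainmain} together with Theorem~\ref{teonuovo}(ii) to get regularity of $V(B)\times W(C)\times\Omega$ directly, but the computation is the same.
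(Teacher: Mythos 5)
Your proof is correct and takes exactly the same route as the paper: observe that $\gcd(|B_1|,|B_2|)=\gcd(h-1,1)=1$, so condition \eqref{gcd} holds trivially for any $C$, and apply Theorem~\ref{comm}(ii). Your write-up is just a more explicit version of the paper's one-line argument.
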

\begin{proof} Observe that $\gcd(|B_1|,|B_2|)=1$ and apply Theorem \ref{comm}.
\end{proof}

\begin{lemma}\label{old} The following conditions are equivalent:\vspace{-1mm}
\begin{itemize}\item[i)] $G$ is regular;\vspace{-1mm}
\item[ii)] $S_h\times S_n\times \{id\}$ is regular;\vspace{-1mm}
\item[iii)] $\gcd(h,n!)=1.$
\end{itemize}
\end{lemma}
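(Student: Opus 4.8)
The plan is to prove the chain of equivalences $\text{i)} \Rightarrow \text{ii)} \Rightarrow \text{iii)} \Rightarrow \text{i)}$, leaning on the characterization of regular groups given in Theorem \ref{Regular} and on the structural Theorem \ref{teonuovo}. The implication i) $\Rightarrow$ ii) is immediate: $S_h\times S_n\times\{id\}$ is a subgroup of $G$, and we already observed that any subgroup of a regular group is regular.

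For ii) $\Rightarrow$ iii), the cleanest route is to invoke Theorem \ref{teonuovo}(i) with the trivial partitions $B=\{H\}$ and $C=\{N\}$, so that $V(B)=S_h$, $W(C)=S_n$, $s=1$, $|B_1|=h$, $t=1$, $|C_{k^*}|=|C_1|=n$. The regularity of $S_h\times S_n\times\{id\}$ is then equivalent to $\gcd(h,n!)=1$, which is precisely condition iii). Alternatively, one can argue directly: if $\gcd(h,n!)=d>1$, pick a prime $\pi\mid d$; since $\pi\mid h$ there is an $h$-cycle $\varphi\in S_h$ whose unique orbit has order $h$ divisible by $\pi$, hence $\pi\mid\gcd(T(\varphi))$; since $\pi\mid n!$ we have $\pi\le n$, so there is a $\pi$-cycle $\psi\in S_n$ with $|\psi|=|\psi|_\pi=\pi$. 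Then $(\varphi,\psi,id)\in S_h\times S_n\times\{id\}$ violates condition a) of Theorem \ref{Regular}, so the group is not regular.

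For iii) $\Rightarrow$ i), assume $\gcd(h,n!)=1$ and verify conditions a) and b) of Theorem \ref{Regular} for $U=G$. For condition a): if $(\varphi,\psi,id)\in G$ with $\psi\neq id$ and $|\psi|_\pi=\pi^a$ for a prime $\pi$ and $a\in\mathbb{N}$, then $\pi\mid|\psi|$, and since $|\psi|$ divides $|S_n|=n!$ we get $\pi\mid n!$; but then $\pi\nmid h$ by iii), and since $\gcd(T(\varphi))$ divides $h$ (it divides $T_1(\varphi)$, which divides $|\varphi|$, which divides $h!$ — more directly, any $\varphi$-orbit has size at most, and dividing, $h$... actually one uses $\gcd(T(\varphi))\mid h$ because $\sum_j T_j(\varphi)=h$ forces $\gcd(T(\varphi))\mid h$), we conclude $\pi^a\nmid\gcd(T(\varphi))$. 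For condition b): if $(\varphi,\psi,\rho_0)\in G$ with $\psi^2=id$ and $\psi$ not conjugate to $\rho_0$, we must check $2\nmid\gcd(T(\varphi))$; since $2\mid n!$ (as $n\ge 2$), iii) gives $2\nmid h$, and as $\gcd(T(\varphi))\mid h$ we are done.

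The only delicate point — and the one I would state carefully rather than wave at — is the divisibility fact $\gcd(T(\varphi))\mid h$: from $\sum_{j=1}^{r(\varphi)} T_j(\varphi)=h$ it follows immediately that any common divisor of the $T_j(\varphi)$ divides their sum $h$. Everything else is routine bookkeeping with the definitions of type, order, and conjugacy already laid out in Section \ref{char}. I do not expect a genuine obstacle here; the lemma is essentially a specialization of Theorems \ref{Regular} and \ref{teonuovo} to the full group $G$, and its role is mainly to connect the new regularity condition back to Moulin's classical condition \eqref{moulin-condition}.
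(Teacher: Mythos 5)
Your proof is correct and follows essentially the same route as the paper's: i)~$\Rightarrow$~ii) via the subgroup property of regularity, ii)~$\Rightarrow$~iii) by exhibiting an $h$-cycle $\varphi$ and a $\pi$-cycle $\psi$ violating condition a) of Theorem \ref{Regular}, and iii)~$\Rightarrow$~i) via the regularity criterion. The only cosmetic difference is that for iii)~$\Rightarrow$~i) the paper simply cites Theorem \ref{teonuovo} with the trivial partitions $B=\{H\}$, $C=\{N\}$, whereas you inline that argument by checking conditions a) and b) of Theorem \ref{Regular} directly using $\gcd(T(\varphi))\mid h$ (which you justify correctly from $\sum_j T_j(\varphi)=h$); both are the same computation.
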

\begin{proof}  i) $\Rightarrow$  ii). This immediately follows from  $S_h\times S_n\times \{id\}\leq G.$

ii) $\Rightarrow$  iii). Let $S_h\times S_n\times \{id\}$ be regular. Given a prime $\pi$ with $\pi\leq n$, we need to show that $\pi\nmid h.$ Consider $\psi\in S_n$ a cycle of length $\pi,$ $\varphi\in S_h$ a cycle of length $h$ and $(\varphi,\psi,id)\in S_h\times S_n\times \{id\}$. Then by Theorem \ref{Regular}, $\pi=|\psi|_\pi\nmid \gcd(T(\varphi))=h.$

iii) $\Rightarrow$  i). Let $\gcd(h,n!)=1.$ Since $G=V(\{H\})\times W(\{N\})\times \Omega$, we immediately get the regularity of $G$ using  Theorem \ref{teonuovo}.
\end{proof}

\begin{corollary}\label{families}  The following statements hold true:
\begin{itemize}
\item[i)] $\mathcal{F}^{S_h\times \{id\}\times \Omega}_{\min}\neq \varnothing$ if and only if $h$ is odd;
\item[ii)] $\mathcal{F}^{\{id\}\times S_n\times \Omega}_{\min}\neq \varnothing$;
\item[iii)] $\mathcal{F}^{S_h\times S_n\times \{id\}}_{\min}\neq \varnothing$ if and only if $\gcd(h,n!)=1$;
\item[iv)]$\mathcal{F}^{G}_{\min}\neq \varnothing$ if and only if $\gcd(h,n!)=1$.
\end{itemize}
\end{corollary}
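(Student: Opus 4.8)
The plan is to derive all four statements of Corollary \ref{families} from the already-established machinery, chiefly Theorem \ref{comm} (via Theorem \ref{mainmain} and Theorem \ref{teonuovo}), by realizing each subgroup $U$ appearing in the statement as a group of the form $V(B)\times W(C)\times\Omega$ or $V(B)\times W(C)\times\{id\}$ for a suitable choice of the partitions $B$ of $H$ and $C$ of $N$. The key observation is that $S_h=V(\{H\})$ and $S_n=W(\{N\})$, while $\{id\}=V(B_{\mathrm{fin}})$ for the finest partition $B_{\mathrm{fin}}=\{\{1\},\dots,\{h\}\}$ of $H$ and $\{id\}=W(C_{\mathrm{fin}})$ for the finest partition $C_{\mathrm{fin}}=\{\{1\},\dots,\{n\}\}$ of $N$. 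With these identifications, the numbers $\gcd(|B_j|)_{j=1}^s$ and $|C_{k^*}|$ in conditions \eqref{gcd2} and \eqref{gcd} become explicit, and each item reduces to an elementary arithmetic check.

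The execution would proceed item by item. For i), take $B=\{H\}$ so that $V(B)=S_h$ and $\gcd(|B_j|)_{j=1}^s=h$, and take $C=C_{\mathrm{fin}}$ so that $W(C)=\{id\}$ and $|C_{k^*}|=1$; then condition \eqref{gcd} reads $\gcd(h,\mathrm{lcm}(2,1))=\gcd(h,2)=1$, i.e.\ $h$ is odd, and Theorem \ref{comm}(ii) yields the claim. For ii), take $B=B_{\mathrm{fin}}$ so that $V(B)=\{id\}$ and $\gcd(|B_j|)_{j=1}^s=1$, and take $C=\{N\}$ so that $W(C)=S_n$; since $\gcd(1,\,\mathrm{lcm}(2,n!))=1$ always holds, condition \eqref{gcd} is automatically satisfied and Theorem \ref{comm}(ii) gives $\mathcal{F}^{\{id\}\times S_n\times\Omega}_{\min}\neq\varnothing$ unconditionally. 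For iii), take $B=\{H\}$ and $C=\{N\}$, so $\gcd(|B_j|)_{j=1}^s=h$ and $|C_{k^*}|=n$; condition \eqref{gcd2} becomes $\gcd(h,n!)=1$, and Theorem \ref{comm}(i) applies. For iv), the relevant group is all of $G=S_h\times S_n\times\Omega=V(\{H\})\times W(\{N\})\times\Omega$, so condition \eqref{gcd} reads $\gcd(h,\,\mathrm{lcm}(2,n!))=1$; since $n\ge 2$ forces $2\mid n!$, this is equivalent to $\gcd(h,n!)=1$, and Theorem \ref{comm}(ii) finishes the argument. Alternatively, for iv) one may invoke Lemma \ref{old} together with Theorem \ref{mainmain}.

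There is essentially no obstacle here: the work was done in Theorem \ref{comm} and Theorem \ref{teonuovo}, and the only thing to be careful about is the bookkeeping in item iv), namely observing that $\mathrm{lcm}(2,n!)=n!$ for $n\ge 2$ so that \eqref{gcd} collapses to the clean condition $\gcd(h,n!)=1$; this matches the statement and also shows items iii) and iv) share the same criterion. One might add a one-line remark that items iii) and iv) being equivalent is consistent with the general observation following Theorem \ref{comm}, since the partition $C=\{N\}$ has a non-singleton element whenever $n\ge 2$.
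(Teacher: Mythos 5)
Your proposal is correct and follows essentially the same route as the paper: each group is realized as $V(B)\times W(C)\times\Omega$ (or $\times\{id\}$) for the appropriate coarse or fine partitions, and the regularity criterion of Theorem \ref{teonuovo} together with Theorem \ref{mainmain} (i.e.\ Theorem \ref{comm}) reduces everything to the stated arithmetic conditions. The only cosmetic difference is that for items iii) and iv) the paper routes the argument through Lemma \ref{old}, whereas you apply Theorem \ref{comm} directly and note that $\mathrm{lcm}(2,n!)=n!$ for $n\ge 2$; both are equally valid.
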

\begin{proof} i) By Theorem \ref{mainmain}, we have to study the regularity of $U=S_h\times\{id\}\times \Omega$. Note that $U=V(B)\times W(C)\times \Omega$, where
 $B=\{H\}$ and  $C=\{\{i\}:i\in\{1,\dots,n\}\}$. Then, by Theorem \ref{teonuovo}, we have that $U$ is regular if and only if $\gcd (h, 2)=1$, that is, $h$ is odd.

ii) By Theorem \ref{mainmain}, we have to study the regularity of  $U=\{id\}\times S_n\times \Omega$. Note that $U=V(B)\times W(C)\times \Omega$, where $B=\{\{i\}:i\in\{1,\dots,h\}\}$ and $C=\{N\}$. Then, by Theorem \ref{teonuovo}, we get that $U$ is regular .

iii) By Theorem \ref{mainmain}, we have to study the regularity of  $U=S_h\times S_n\times \{id\}$.  By Lemma \ref{old}, we
know that $U$ is regular if and only if $\gcd(h,n!)=1$.

iv) By Theorem \ref{mainmain}, we have to study the regularity of  $G$.  By Lemma \ref{old}, we
know that $G$ is regular if and only if $\gcd(h,n!)=1$.
\end{proof}

\section{Some applications}\label{example}

The algebraic approach we employ has the advantage to provide a method to potentially build and count all the rules
belonging to  $\mathcal{F}^U$ and $\mathcal{F}^U_{\min}$ for all $U\le G$ regular. In this section we show how to perform such computations in two simple cases.

\subsection{Three individuals and three alternatives}\label{3-3}

Let $h=n=3$ so that $H=\{1,2,3\}$, $N=\{1,2,3\}$, $|\mathcal{P}|=6^3$ and $\rho_0=(13).$ In this case, by Lemma \ref{old}, $G=S_3\times S_3\times \Omega$ is not regular because $\gcd(3,3!)\neq 1$. As a consequence, there is no anonymous, neutral and reversal symmetric minimal majority rule. Consider then the partition $B=\{\{1,2\},\{3\}\}$ of $H$ distinguishing individual $3$, who can be thought, for instance, to be the president of the committee, and the  partition $C=\{N\}$ of $N$.  Corollary
\ref{president}, guarantees the existence of $B$-anonymous, neutral
 and reversal symmetric minimal majority rules, that is, $U$-symmetric minimal majority rules, where $U=V(B)\times S_3\times \Omega$. Note that $U$ has order 24 and if $(\varphi,\psi,\rho)\in U$, then $\varphi(3)=3$.  By Propositions \ref{anr} and \ref{n-an-nu}, we can completely describe these rules by finding a system of representatives  of the $U$-orbits $P=(p^j)_{j=1}^{R(U)}$, and determining, for every $j\in\{1,\ldots,R(U)\}$,  the sets $S^U_2(p^j)$.

Since, for every $p\in\mathcal{P}$, the orbit $p^U$ contains a preference profile with preference relation associated with individual 3 given by $id$,
we construct $P$ by selecting  the preference profiles in $\mathcal{P}^{id}=\{p\in \mathcal{P}: p_3=id\}. $ We claim that if $p\in \mathcal{P}^{id}$, then \[
 \mathrm{Stab}_U(p)\leq T=\{(id,id,id),\  ((12),id,id),\ (id,\rho_0,\rho_0),\   ((12),\rho_0,\rho_0)\}.
\]
Namely, since $p_3=id$, if $(\varphi, \psi,\rho)\in \mathrm{Stab}_U(p)$, then we also have  $p^{(\varphi, \psi,\rho)}_3=id$. Moreover, $\psi\rho=id$, so that $\psi=\rho^{-1}=\rho.$ In other words, $\mathrm{Stab}_U(p)\leq \{(\varphi, \rho,\rho):\varphi\in\{(12),id\},\   \rho\in\Omega\}=T$.
It immediately follows that, for every $p\in\mathcal{P}^{id}$, we have
\[
S_1^U(p)
=\left\{
\begin{array}{ll}
\mathcal{L}(N) & \mbox{if }\;\mathrm{Stab}_U(p)\cap \{(id,\rho_0,\rho_0),\   ((12),\rho_0,\rho_0)\}=\varnothing\\
\vspace{-2mm}\\
\Omega          &  \mbox{if }\;\mathrm{Stab}_U(p)\cap \{(id,\rho_0,\rho_0),\   ((12),\rho_0,\rho_0)\}\neq \varnothing,\\
\end{array}\right.
\]
since the centralizer $C_{S_3}(\rho_0)$ is equal to $\Omega$.

Note that $T$ is a group of order $4$ which is generated by  $((12),id,id)$ and $ (id,\rho_0,\rho_0).$
In particular, for every $p\in \mathcal {P}$, we have  that $|\mathrm{Stab}_U(p)|$ divides $4$ and therefore, recalling \eqref{orbord}, we get $\{|p^U|: p\in \mathcal{P}\}\subseteq \{6,12,24\}.$
We split the set $\mathcal{P}^U$ of $U$-orbits into the three disjoint subsets
\[
\mathcal{A}=\{p^U\in \mathcal{P}^U: |p^U|=6\}, \quad \mathcal{B}=\{p^U\in \mathcal{P}^U: |p^U|=12\}, \quad \mathcal{C}=\{p^U\in \mathcal{P}^U: |p^U|=24\},
\]
and put $a=|\mathcal{A}|,\ b=|\mathcal{B}|,\ c=|\mathcal{C}|$. Since the $U$-orbits give a partition of $\mathcal{P}$, we have $6a+12b+24c=6^3$, that is, \begin{equation}\label{class}a+2b+4c=36.\end{equation}

We construct $P$ selecting separately a set of representatives for the orbits in $\mathcal{A}$, $\mathcal{B}$ and $\mathcal{C}.$ Let  $p=(\sigma, \mu\, |\, id)\in \mathcal{P}^{id},$ where $\sigma,\mu\in S_3$. The symbol $|$ is introduced to put in evidence the elements of the partition $B$.
If $p^U\in \mathcal{A}$, then $\mathrm{Stab}_U(p)=T$. From $((12),id,id)\in \mathrm{Stab}_U(p),$ it follows that $\sigma=\mu$, while $ (id,\rho_0,\rho_0)\in \mathrm{Stab}_U(p)$ gives $\rho_0\sigma\rho_0=\sigma$ and therefore $\sigma\in C_{S_3}(\rho_0)=\Omega$. Thus, we have just two choices for $p$ given by $p^1=(id, id\, |\, id)$ and $p^2=(\rho_0, \rho_0\, |\, id).$  Both of them lie effectively in $\mathcal{A}$ and
 $(p^{1})^U\neq (p^{2})^U$ because $p^1$ has three equal components while $p^2$ has not. This proves $a=2$ and gives the first two representatives in $P.$ Next let $p^U\in \mathcal{B}$. Then $|\mathrm{Stab}_U(p)|=2$, so that $\mathrm{Stab}_U(p)$ is generated by one element belonging to $\{ ((12),id,id),\ (id,\rho_0,\rho_0),\   ((12),\rho_0,\rho_0)\}.$ We analyse, by a case by case argument, those three possibilities leaving some easy details to the reader. $\mathrm{Stab}_U(p)=\langle((12),id,id)\rangle$ if and only if $\sigma=\mu,$ with $\sigma\notin \Omega$; the preference profiles $p^3=((132),(132)\, |\, id)$ and $p^4=((23), (23)\, |\, id)$ generate distinct orbits, while
$((123), (123)\, |\, id)\in (p^3)^U$ and $((12), (12)\, |\, id)\in (p^4)^U$.
 $\mathrm{Stab}_U(p)=\langle(id,\rho_0,\rho_0)\rangle$  if and only if $\sigma=id$ and $\mu=\rho_0$ or $\sigma=\rho_0$ and $\mu=id$; these choices lead to preference profiles both in the orbit  of $p^5=(id, \rho_0\, |\, id).$
$\mathrm{Stab}_U(p)=\langle((12),\rho_0,\rho_0)\rangle$ if and only if $\mu=\rho_0\sigma\rho_0$, with $\sigma\not\in\Omega$; this gives the two distinct  orbits $(p^{6})^U$ and $(p^{7})^U$, where $p^6=((123), (132)\, |\, id)$ and  $p^7=((12), (23)\, |\, id)$.
 It is easily checked that no coincidence is possible among the orbits $(p^j)^U$ for $j\in\{3,\dots, 7\},$ and thus $b=5.$ By relation \eqref{class}, we then have $c=6$ and  $R(U)=13$. As a consequence, we are left with finding $6$ preference profiles whose orbits are distinct and in $\mathcal{C}$. It can be easily proved that the desired representatives are $p^8,\ldots, p^{13}$ in the list below, where we explicitly write in the matrix form all the representatives for the $U$-orbits:
\[
p^1=
\left[
\begin{array}{cc|c}
 1 & 1 &  1 \\
 2 & 2 &  2\\
 3 & 3 &  3\\
\end{array}
\right]
,\quad
p^2=
\left[
\begin{array}{cc|c}
 3 & 3 & 1  \\
 2 & 2 & 2\\
 1 & 1 & 3\\
\end{array}
\right],\quad
p^3=
\left[
\begin{array}{cc|c}
 3 & 3 & 1\\
 1 & 1 & 2 \\
 2 & 2 & 3\\
\end{array}
\right],
\quad
p^4=
\left[
\begin{array}{cc|c}
 1 & 1 & 1 \\
 3 & 3 &  2\\
 2 & 2 &  3\\
\end{array}
\right],
\]
\[
p^5=
\left[
\begin{array}{cc|c}
 1 & 3 & 1   \\
 2 & 2 & 2  \\
 3 & 1 &  3\\
\end{array}
\right],\quad
p^6=
\left[
\begin{array}{cc|c}
 2 & 3 & 1  \\
 3 & 1 &  2 \\
 1 & 2 &  3 \\
\end{array}
\right]
,\quad
p^{7}=
\left[
\begin{array}{cc|c}
 2 & 1 & 1  \\
 1 & 3 &  2 \\
 3 & 2 &  3 \\
\end{array}
\right],\quad
p^{8}=
\left[
\begin{array}{cc|c}
 1 & 2 & 1 \\
 2 & 1 & 2\\
 3 & 3 & 3 \\
\end{array}
\right],
\]
\[
\quad
p^{9}=
\left[
\begin{array}{cc|c}
 1 & 2 & 1  \\
 2 & 3 & 2 \\
 3 & 1 & 3 \\
\end{array}
\right],
\quad
p^{10}=
\left[
\begin{array}{cc|c}
 3 & 3 & 1 \\
 2 & 1 &  2 \\
 1 & 2 &  3 \\
\end{array}
\right],\quad
p^{11}=
\left[
\begin{array}{cc|c}
 3& 1 & 1 \\
 2 & 3& 2  \\
 1& 2 & 3 \\
\end{array}
\right],\quad
p^{12}=
\left[
\begin{array}{cc|c}
 2 & 1 & 1 \\
 3& 3 & 2  \\
 1 & 2 & 3  \\
\end{array}
\right],
\]
\[
p^{13}=
\left[
\begin{array}{cc|c}
 2 & 2 & 1  \\
 3 & 1 & 2  \\
 1 & 3 & 3  \\
\end{array}
\right].
\]
Little further work allows to get the following table:

\begin{equation*}
\begin{array}{|c||c|c|c|c|c|c|}
\hline
       & C_{2} & C_{3} &   S_1^U  & S_2^U  \\ \hline\hline
p^{1} & \left\{[1,2,3]^T\right\} & \left\{[1,2,3]^T\right\} & \left\{[1,2,3]^T,  [3,2,1]^T\right\}    &  \{[1,2,3]^T\}    \\ \hline
p^{2} & \left\{[3,2,1]^T\right\} &  \mathcal{L}(\{1,2,3\})    & \left\{[1,2,3]^T,  [3,2,1]^T\right\}   & \left\{[3,2,1]^T\right\}      \\ \hline
p^{3} & \left\{[3,1,2]^T\right\} & \left\{[1,2,3]^T, [1,3,2]^T, [3,1,2]^T\right\} &  \mathcal{L}(\{1,2,3\})   &  \{[3,1,2]^T\}        \\ \hline
p^{4} & \left\{[1,3,2]^T\right\} & \left\{[1,2,3]^T\, [1,3,2]^T \right\} &  \mathcal{L}(\{1,2,3\})    & \left\{[1,3,2]^T\right\}      \\ \hline
p^{5} & \left\{[1,2,3]^T\right\} &  \mathcal{L}(\{1,2,3\})   &  \left\{[1,2,3]^T,  [3,2,1]^T\right\}   &  \left\{[1,2,3]^T\right\}       \\ \hline
p^{6} & \varnothing & \mathcal{L}(\{1,2,3\}) &  \left\{[1,2,3]^T,  [3,2,1]^T\right\}     & \left\{[1,2,3]^T,  [3,2,1]^T\right\}       \\ \hline
p^{7} & \{[1,2,3]^T\} & \{[1,2,3]^T,[2,1,3]^T,[1,3,2]^T\} & \left\{[1,2,3]^T,  [3,2,1]^T\right\}   &    \{[1,2,3]^T\}     \\ \hline
p^{8} & \{[1,2,3]^T\} & \{[1,2,3]^T,[2,1,3]^T\} & \mathcal{L}(\{1,2,3\})   &   \{[1,2,3]^T\} \\ \hline
p^{9} & \{[1,2,3]^T\} & \{[1,2,3]^T,[2,1,3]^T, [3,2,1]^T\} & \mathcal{L}(\{1,2,3\})  &  \{[1,2,3]^T\}          \\ \hline
p^{10} & \{[3,1,2]^T\} &\mathcal{L}(\{1,2,3\}) & \mathcal{L}(\{1,2,3\})   &    \{[3,1,2]^T\}   \\ \hline
p^{11} & \{[1,3,2]^T\} & \mathcal{L}(\{1,2,3\})  & \mathcal{L}(\{1,2,3\})   &  \{[1,3,2]^T\}   \\ \hline
p^{12} & \{[1,2,3]^T\} & \{[1,2,3]^T,[2,1,3]^T,[2,3,1]^T\} & \mathcal{L}(\{1,2,3\})   &  \{[1,2,3]^T\}   \\ \hline
p^{13} & \{[2,1,3]^T\} &  \{[1,2,3]^T,[2,1,3]^T,[2,3,1]^T\} & \mathcal{L}(\{1,2,3\})   & \{[2,1,3]^T\}   \\ \hline
\end{array}
\end{equation*}
Looking at the table and using   Propositions \ref{anr} and \ref{n-an-nu}, we deduce that $|\mathcal{F}^{U}|=2^{13}3^{8}$ and $|\mathcal{F}^{U}_{\min}|=2.$
In particular, there are only two $U$-symmetric minimal majority rules one for each possible choice related to $p^6$.

We finally observe that the rules in  $\mathcal{F}^{U}_{\min}$ can be effectively described in terms of the function $S$ defined in \eqref{smr}. Indeed, we have that $\mathcal{F}_{\min}^U=\{F_1,F_2\}$, where $F_1$ and $F_2$ are defined, for every $p\in\mathcal{P}$, as
\[
F_1(p)=
\left\{
\begin{array}{ll}
S(p) &\mbox{if }S(p)\in\mathcal{L}(N)\\
\vspace{-2mm}\\
p_3 & \mbox{if }S(p)\not \in\mathcal{L}(N)
\end{array}
\right.\quad \mbox{ and }\quad
F_2(p)=
\left\{
\begin{array}{ll}
S(p) &\mbox{if }S(p)\in\mathcal{L}(N)\\
\vspace{-2mm}\\
p_3\rho_0 & \mbox{if }S(p)\not \in\mathcal{L}(N).
\end{array}
\right.
\]
Thus, having found an agreement on the principles of minimal majority, neutrality, reversal symmetry and anonymity with respect to the subcommittees $\{1,2\}$ and $\{3\}$, we have that committee members are left with deciding which rule to employ between $F_1$ and $F_2$. Of course, if individual $3$ is assumed to have more decision power than other individuals, then the choice of $F_1$ is definitely more appropriate.

\subsection{Five individuals and three alternatives}

Let $h=5$  and $n=3$ so that, as in the previous example, $\rho_0=(13)$. Since $\gcd(5,3!)=1$,
by Corollary \ref{families}, there exists an anonymous, neutral and reversal symmetric minimal majority rule, that is, a $G$-symmetric minimal majority rule where $G=S_5\times S_3\times \Omega$.
In order to apply  Propositions \ref{anr} and \ref{n-an-nu}, we need a system of representatives of the $G$-orbits.
Bubboloni and Gori (2014, Section 10) consider $U=S_5\times S_3\times \{id\}$,
compute $R(U)=42,$ and construct a system of representatives of the $U$-orbits
 $\hat{P}=(\hat{p}^i)_{i=1}^{42}$. Here we extract from $\hat{P}$ a system of representatives of the $G$-orbits $P=(p^j)_{j=1}^{R(G)}$.

In order to determine  $P$, we
scroll the list $\hat{P}$ starting from the beginning, inquiring if a certain $\hat{p}^i$ has a stabilizer containing an element of the type $(\varphi, \psi,(13))\in G,$ with $\psi$ a conjugate of $(13)$,  that is, $\psi\in\{(12), (13),(23)\}.$
If that happens, then $[\mathrm{Stab}_G(\hat{p}^i):  \mathrm{Stab}_U(\hat{p}^i)]=2$,
$(\hat{p}^{i})^G=(\hat{p}^{i})^U$ and $S_1^G(\hat{p}^i)=\{q\in \mathcal{L}(\{1,2,3\}) : \psi q (13)=q\}$; in this case we put $\hat{p}^i$ in the list  $P$.
 If that does not happen, then there exists a unique $k\in\mathbb{N}$ with  $i< k\leq 42$  such that $(\hat{p}^{i})^G=(\hat{p}^{i})^U\cup (\hat{p}^{k})^U$ and we  have $S_1^G(\hat{p}^i)=\mathcal{L}(\{1,2,3\})$; in this case we  put $\hat{p}^i$ in the list $P$ and eliminate $\hat{p}^{k}$ from the list $\hat{P}$.\footnote{All the facts above are  consequence of $[G:U]=2.$ Further details  about this example can be found in Bubboloni and Gori (2013). }

At the end of the described procedure we get the following list of representatives for the $G$-orbits, expressed in the matrix form:
\[
p^1=
\left[
\begin{array}{ccccc}
 1 & 1 &  1&  1& 1 \\
 2 & 2 &  2&  2& 2\\
 3 & 3 &  3&  3& 3 \\
\end{array}
\right],\quad
p^2=
\left[
\begin{array}{ccccc}
 1 & 1 & 1 & 1 & 2 \\
 2 & 2 & 2&  2&  1\\
 3 & 3 & 3 & 3 & 3 \\
\end{array}
\right],\quad
p^3=
\left[
\begin{array}{ccccc}
 1 & 1 & 1 & 1 & 3 \\
 2 & 2 & 2 & 2 & 2 \\
 3 & 3 & 3 & 3 & 1 \\
\end{array}
\right],
\]
\[
p^4=
\left[
\begin{array}{ccccc}
 1 & 1 & 1 & 1 & 2 \\
 2 & 2 &  2& 2 & 3\\
 3 & 3 &  3& 3 & 1\\
\end{array}
\right],\quad
p^5=
\left[
\begin{array}{ccccc}
 1 & 1 & 1 & 2 &2  \\
 2 & 2 & 2 & 1 & 1 \\
 3 & 3 &  3&  3&  3\\
\end{array}
\right],\quad
p^6=
\left[
\begin{array}{ccccc}
 1 & 1 & 1 & 3 & 3 \\
 2 & 2 &  2& 2 & 2 \\
 3 & 3 &  3& 1 & 1 \\
\end{array}
\right]
\]
\[
p^{7}=
\left[
\begin{array}{ccccc}
 1 & 1 & 1 & 2 & 2 \\
 2 & 2 &  2& 3 & 3 \\
 3 & 3 &  3& 1 & 1 \\
\end{array}
\right],\quad
p^{8}=
\left[
\begin{array}{ccccc}
 1 & 1 & 1 & 2 & 3 \\
 2 & 2 & 2 & 1 & 2\\
 3 & 3 & 3 & 3 & 1\\
\end{array}
\right],\quad
p^{9}=
\left[
\begin{array}{ccccc}
 1 & 1 & 1 & 2 & 1 \\
 2 & 2 & 2 &  1&  3\\
 3 & 3 & 3 &  3&  2\\
\end{array}
\right],
\]
\[
p^{10}=
\left[
\begin{array}{ccccc}
 1 & 1 & 1 & 2 & 2 \\
 2 & 2 &  2& 1 & 3 \\
 3 & 3 &  3& 3 & 1 \\
\end{array}
\right],\quad
p^{11}=
\left[
\begin{array}{ccccc}
 1 & 1 & 1 & 2 & 3 \\
 2 & 2 & 2 & 1 & 1 \\
 3 & 3 & 3 & 3 & 2 \\
\end{array}
\right],\quad
p^{12}=
\left[
\begin{array}{ccccc}
 1 & 1 & 1 & 3 & 2 \\
 2 & 2 & 2 & 2 & 3 \\
 3 & 3 & 3 & 1 & 1 \\
\end{array}
\right],
\]
\[
p^{13}=
\left[
\begin{array}{ccccc}
 1 & 1 & 1 & 2 & 3 \\
 2 & 2 & 2 & 3 & 1 \\
 3 & 3 & 3 & 1 & 2 \\
\end{array}
\right],\quad
p^{14}=
\left[
\begin{array}{ccccc}
 1 & 1 &  2& 2 & 3 \\
 2 & 2 &  1& 1 & 2 \\
 3 & 3 &  3& 3 & 1 \\
\end{array}
\right],\quad
p^{15}=
\left[
\begin{array}{ccccc}
 1 & 1 & 2 & 2 & 1 \\
 2 & 2 & 1 & 1 &  3\\
 3 & 3 & 3 & 3 &  2\\
\end{array}
\right],
\]
\[
p^{16}=
\left[
\begin{array}{ccccc}
 1 & 1 & 3 & 3 & 2 \\
 2 & 2 & 2 & 2 & 1 \\
 3 & 3 & 1 & 1 & 3 \\
\end{array}
\right],\quad
p^{17}=
\left[
\begin{array}{ccccc}
 1 & 1 & 2 & 2 &  2\\
 2 & 2 & 3 & 3 &  1\\
 3 & 3 & 1 & 1 &  3\\
\end{array}
\right],\quad
p^{18}=
\left[
\begin{array}{ccccc}
 1 & 1 & 2 & 2 & 3 \\
 2 & 2 & 3 & 3 & 2 \\
 3 & 3 & 1 & 1 & 1 \\
\end{array}
\right],
\]
\[
p^{19}=
\left[
\begin{array}{ccccc}
 1 & 1 & 3 & 3 & 2 \\
 2 & 2 & 1 & 1 & 3 \\
 3 & 3 & 2 & 2 & 1 \\
\end{array}
\right],\quad
p^{20}=
\left[
\begin{array}{ccccc}
 1 & 1 & 2 & 3 & 1 \\
 2 & 2 & 1 & 2 & 3 \\
 3 & 3 & 3 & 1 & 2 \\
\end{array}
\right],\quad
p^{21}=
\left[
\begin{array}{ccccc}
 1 & 1 & 2 & 3 & 2 \\
 2 & 2 & 1 & 2 & 3 \\
 3 & 3 & 3 & 1 & 1 \\
\end{array}
\right],
\]
\[
p^{22}=
\left[
\begin{array}{ccccc}
 1 & 1 & 2 & 3 &  3\\
 2 & 2 & 1 & 2 &  1\\
 3 & 3 & 3 & 1 &  2\\
\end{array}
\right],\quad
p^{23}=
\left[
\begin{array}{ccccc}
 1 & 1 & 2 & 1 & 2 \\
 2 & 2 & 1 & 3 & 3 \\
 3 & 3 & 3 & 2 & 1 \\
\end{array}
\right],\quad
p^{24}=
\left[
\begin{array}{ccccc}
 1 & 1 & 2 & 2 & 3 \\
 2 & 2 & 1 & 3 & 1 \\
 3 & 3 & 3 & 1 & 2 \\
\end{array}
\right],
\]
\[
p^{25}=
\left[
\begin{array}{ccccc}
 1 & 1 & 3 & 2 & 3 \\
 2 & 2 & 2 & 3 & 1 \\
 3 & 3 & 1 & 1 & 2 \\
\end{array}
\right],\quad
p^{26}=
\left[
\begin{array}{ccccc}
 1 & 2 & 3 & 1 & 2 \\
 2 & 1 & 2 & 3 & 3 \\
 3 & 3 & 1 & 2 & 1 \\
\end{array}
\right].
\]
In particular, we obtain $R(G)=26$. Looking at the representatives, a simple but tedious computation leads to the following table:

\begin{sideways}
\begin{minipage}{\textheight}
\begin{equation*}
\begin{array}{|c||c|c|c|c|c|c|c|}
\hline
       & C_{3} & C_{4} & C_{5} &  S_1^G  & S_2^G  \\ \hline\hline
p^{1} & \left\{[1,2,3]^T\right\} & \left\{[1,2,3]^T\right\} & \left\{[1,2,3]^T\right\}   &  \{[1,2,3]^T, [3,2,1]^T\}   &  \{[1,2,3]^T\}   \\ \hline
p^{2} & \left\{[1,2,3]^T\right\} & \left\{[1,2,3]^T\right\} & \left\{[1,2,3]^T, [2,1,3]^T\right\}    &  \mathcal{L}(\{1,2,3\})  &  \{[1,2,3]^T\}     \\ \hline
p^{3} & \left\{[1,2,3]^T\right\} & \left\{[1,2,3]^T\right\} &  \mathcal{L}(\{1,2,3\})   &  \{[1,2,3]^T,[3,2,1]^T\}   &  \{[1,2,3]^T\}       \\ \hline
p^{4} & \left\{[1,2,3]^T\right\} & \left\{[1,2,3]^T\right\} & \left\{[1,2,3]^T,[2,1,3]^T,[2,3,1]^T\right\}    &   \mathcal{L}(\{1,2,3\})   &  \{[1,2,3]^T\}   \\ \hline
p^{5} & \left\{[1,2,3]^T\right\} & \{[1,2,3]^T,[2,1,3]^T\} & \{[1,2,3]^T,[2,1,3]^T\}   &   \mathcal{L}(\{1,2,3\})   &  \{[1,2,3]^T\}   \\ \hline
p^{6} & \left\{[1,2,3]^T\right\} & \mathcal{L}(\{1,2,3\}) &  \mathcal{L}(\{1,2,3\})   &   \{[1,2,3]^T,[3,2,1]^T\}  &  \{[1,2,3]^T\}      \\ \hline
p^{7} & \{[1,2,3]^T\} & \{[1,2,3]^T,[2,1,3]^T,[2,3,1]^T\} & \{[1,2,3]^T,[2,1,3]^T,[2,3,1]^T\}   &  \mathcal{L}(\{1,2,3\})   &  \{[1,2,3]^T\}        \\ \hline
p^{8} & \{[1,2,3]^T\} & \{[1,2,3]^T,[2,1,3]^T\} & \mathcal{L}(\{1,2,3\})   &  \mathcal{L}(\{1,2,3\})   &  \{[1,2,3]^T\} \\ \hline
p^{9} & \{[1,2,3]^T\} & \{[1,2,3]^T\} & \{[1,2,3]^T,[3,2,1]^T,[1,3,2]^T\}   &  \{[1,2,3]^T,[3,2,1]^T\}   &  \{[1,2,3]^T\}        \\ \hline
p^{10} & \{[1,2,3]^T\} & \{[1,2,3]^T,[2,1,3]^T\} & \{[1,2,3]^T,[2,1,3]^T,[2,3,1]^T\}   &    \mathcal{L}(\{1,2,3\})   &  \{[1,2,3]^T\}   \\ \hline
p^{11} & \{[1,2,3]^T\} & \{[1,2,3]^T\} & \mathcal{L}(\{1,2,3\})   &   \mathcal{L}(\{1,2,3\})    &  \{[1,2,3]^T\}   \\ \hline
p^{12} & \{[1,2,3]^T\} & \{[1,2,3]^T,[2,1,3]^T,[2,3,1]^T\} & \mathcal{L}(\{1,2,3\})   &    \mathcal{L}(\{1,2,3\})   &  \{[1,2,3]^T\} \\ \hline
p^{13} & \{[1,2,3]^T\} & \{[1,2,3]^T,[2,3,1]^T\} & \mathcal{L}(\{1,2,3\})   &  \{[1,2,3]^T,[3,2,1]^T\}  &  \{[1,2,3]^T\}     \\ \hline
p^{14} & \{[2,1,3]^T\} & \{[1,2,3]^T,[2,1,3]^T\} & \mathcal{L}(\{1,2,3\})   &  \mathcal{L}(\{1,2,3\})    &  \{[2,1,3]^T\}     \\ \hline
p^{15} & \{[1,2,3]^T\} & \{[1,2,3]^T,[2,1,3]^T\} & \{[1,2,3]^T,[3,2,1]^T,[1,3,2]^T\}   &   \mathcal{L}(\{1,2,3\})  &  \{[1,2,3]^T\}   \\ \hline
p^{16} & \{[2,1,3]^T\} & \mathcal{L}(\{1,2,3\}) & \mathcal{L}(\{1,2,3\})   &  \mathcal{L}(\{1,2,3\})    &  \{[2,1,3]^T\}     \\ \hline
p^{17} & \{[2,1,3]^T\} & \{[1,2,3]^T,[2,1,3]^T,[2,3,1]^T\} & \{[1,2,3]^T,[2,1,3]^T,[2,3,1]^T\}   &  \{[2,1,3]^T,[3,1,2]^T\}   &  \{[2,1,3]^T\}  \\ \hline
p^{18} & \{[2,3,1]^T\} & \{[1,2,3]^T,[2,1,3]^T,[2,3,1]^T\} & \mathcal{L}(\{1,2,3\})   &  \mathcal{L}(\{1,2,3\})   &  \{[2,3,1]^T\}    \\ \hline
p^{19} & \varnothing  & \{[1,2,3]^T,[1,3,2]^T,[3,1,2]^T\} & \mathcal{L}(\{1,2,3\})   &  \{[1,3,2]^T,[2,3,1]^T\}    &  \{[1,3,2]^T\}     \\ \hline
p^{20} & \{[1,2,3]^T\} & \{[1,2,3]^T,[3,2,1]^T,[1,3,2]^T\} & \mathcal{L}(\{1,2,3\})   &   \{[1,2,3]^T,[3,2,1]^T\}  &  \{[1,2,3]^T\}       \\ \hline
p^{21} & \{[2,1,3]^T\} & \{[1,2,3]^T,[2,1,3]^T,[2,3,1]^T\} & \mathcal{L}(\{1,2,3\})   &  \mathcal{L}(\{1,2,3\})  &  \{[2,1,3]^T\}    \\ \hline
p^{22} & \{[1,2,3]^T\} & \mathcal{L}(\{1,2,3\}) & \mathcal{L}(\{1,2,3\})   &  \mathcal{L}(\{1,2,3\})   &  \{[1,2,3]^T\}    \\ \hline
p^{23} & \{[1,2,3]^T\} & \{[1,2,3]^T,[2,1,3]^T\} & \mathcal{L}(\{1,2,3\})   &   \mathcal{L}(\{1,2,3\})  &  \{[1,2,3]^T\}      \\ \hline
p^{24} & \{[1,2,3]^T\} & \mathcal{L}(\{1,2,3\}) & \mathcal{L}(\{1,2,3\})   &  \mathcal{L}(\{1,2,3\})  &  \{[1,2,3]^T\}    \\ \hline
p^{25} & \varnothing  & \mathcal{L}(\{1,2,3\}) & \mathcal{L}(\{1,2,3\})   &   \{[1,2,3]^T,[3,2,1]^T\}   &    \{[1,2,3]^T,[3,2,1]^T\}       \\ \hline
p^{26} & \{[2,1,3]^T\} & \mathcal{L}(\{1,2,3\}) & \mathcal{L}(\{1,2,3\})   &   \{[2,1,3]^T,[3,1,2]^T\}    &  \{[2,1,3]^T\}   \\ \hline
\end{array}
\end{equation*}
\end{minipage}
\end{sideways}

Looking at the table and using  Propositions \ref{anr} and \ref{n-an-nu}, we deduce in particular that $|\mathcal{F}^{G}|=2^{26}3^{16}$ and $|\mathcal{F}^{G}_{\min}|=2$.
Then, there are only two possible anonymous, neutral and reversal symmetric minimal majority rules, one for each possible social outcome associated with $p^{25}.$ As $[1,2,3]^T$ is present in $p^{25}$ twice, while  $[3,2,1]^T$ only once, the choice of $[1,2,3]^T$ could be judged more appropriate.

We finally observe that $\mathcal{F}^U_{\min}\setminus \mathcal{F}^G_{\min}\neq \varnothing$ because, as proved in Bubboloni and Gori (2014, Section 10), $|\mathcal{F}^U_{\min}|=18$. That shows, as expected, that there are anonymous and neutral minimal majority rules that are not reversal symmetric.

\section{Proof of Theorem \ref{mainmain}}\label{appendix}

Assume that $\mathcal{F}^U_{\min}\neq \varnothing$. Then $\mathcal{F}^U\neq \varnothing$ and, by Theorem \ref{main}, $U$ is regular.
In order to prove the converse, assume that $U$ is regular. By Theorem \ref{main}, we know that $\mathcal{F}^U\neq \varnothing$ and then, for every $p\in\mathcal{P}$, $S^U_1(p)\neq\varnothing$. We are going to prove that, for every $p\in\mathcal{P}$, $S^U_2(p)\neq\varnothing$, as well. Indeed, by Proposition  \ref{n-an-nu}, that implies $\mathcal{F}^U_{\min}\neq \varnothing$. Of course, for every $p\in\mathcal{P}$ such that $\mathrm{Stab}_U(p)\le S_h \times \{id\}\times \{id\}$, we have $S^U_1(p)=\mathcal{L}(N)$ so that
$S^U_2(p)=C_{\nu(p)}(p)\neq\varnothing$. Thus, we are left with proving that, for every $p\in\mathcal{P}$ such that there exists  $(\varphi,\psi,\rho_0)\in \mathrm{Stab}_U(p)$ with $\psi$ conjugate
to $\rho_0$, we have $S^U_2(p)\neq\varnothing$.

\vspace{2mm}

\noindent {\it From now till the end of the section, let us fix  $p\in\mathcal{P}$ and $(\varphi,\psi,\rho_0)\in \mathrm{Stab}_U(p)$ with $\psi$ conjugate to $\rho_0.$ For simplicity of notation, we set $\nu =\nu(p)$. Recall that, under the assumption that $U$ is regular, we have to prove that $S^U_2(p)=S^U_1(p)\cap C_{\nu}(p) \neq\varnothing$ . }

\vspace{2mm}

We are going to exhibit an element of the set $S^U_2(p)$, namely the linear order $q$ defined in \eqref{q-magic}. The construction of $q$ is quite tricky and relies on some preliminary lemmas concerning the properties of the relations $\Sigma_{\nu}(p)$ and  $\Sigma^C_{\nu}(p)$ defined in \eqref{sigmanu} and \eqref{sigmac}. Thus, the first part of the section is devoted to the study of such relations.

First of all, being $\psi$  a conjugate of $\rho_0$,  we have that $\psi$ has the same type of $\rho_0$ and, in particular, $|\psi|=2$. Let $k=r(\psi)$, $K=\{1,\ldots,k\}$, and let $(\hat{x}_j)_{j=1}^k\in N^k$ be an ordered system of representatives of the $\psi$-orbits. Then, we have
\[
O(\psi)=\left\{
\{\hat{x}_j,\psi(\hat{x}_j)\}: j\in K
\right\}.
\]
Note that if $n$ is even, then $k=\frac{n}{2}$; $|\{\hat{x}_j,\psi(\hat{x}_j)\}|=2$ for all $j\in K$; $\psi$ has no fixed point. If instead $n$ is odd, then $k=\frac{n+1}{2}$; $|\{\hat{x}_j,\psi(\hat{x}_j)\}|=2$ for all $j\in K\setminus\{k\}$; $\hat{x}_k$ is the unique fixed point of $\psi$.

Consider now the relation on $N$ given by
\begin{equation}\label{sigmanu}
\Sigma_{\nu}(p)=\left\{(x,y)\in N\times N: |\{i\in H: x>_{p_i} y\}|\ge \nu\right\}.
\end{equation}
So $(x,y)\in\Sigma_{\nu}(p)$ means that at least $\nu$ individuals prefer $x$ to $y$ with respect to the preference profile $p$. Observe that
$C_{\nu}(p)=\{f\in\mathcal{L}(N): f\supseteq \Sigma_{\nu}(p)\}$ and since $C_{\nu}(p)$ is non-empty, we have that $\Sigma_{\nu}(p)$ is acyclic. Note also that $\Sigma_{\nu}(p)$ is asymmetric and generally not transitive and not complete. Thus, $x\geq _{\Sigma_{\nu}(p)} y$ is equivalent to $x> _{\Sigma_{\nu}(p)} y$ and implies $x\neq y.$
We will write, for compactness, $x>_{\nu}y$ instead of $x> _{\Sigma_{\nu}(p)} y$.
Our first result is about the role of $\psi$ in the relation $\Sigma_{\nu}(p)$.
\begin{lemma}\label{nupsi}
Let $x,y\in N$. Then  $x>_{\nu} y$  if and only if $\psi(y)>_{\nu} \psi(x)$.
\end{lemma}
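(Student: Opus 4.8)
The plan is to unwind the defining property of the stabilizer into a pointwise statement about the orders $p_i$, and then conclude by a relabelling of individuals via $\varphi$.

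First I would record what $(\varphi,\psi,\rho_0)\in\mathrm{Stab}_U(p)$ means concretely: by \eqref{action}, the equality $p^{(\varphi,\psi,\rho_0)}=p$ says that $p_i=\psi\, p_{\varphi^{-1}(i)}\,\rho_0$ for every $i\in H$. Using the description of the products $\psi q$ and $q\rho_0$ recalled above, one checks that for $x,y\in N$ the membership $(x,y)\in\psi q\rho_0$ is equivalent to $(\psi^{-1}(y),\psi^{-1}(x))\in q$; since $p_i$ and $q$ are linear orders and $x\neq y$ forces $\psi^{-1}(x)\neq\psi^{-1}(y)$, this yields the pointwise equivalence
\[
x>_{p_i}y\quad\Longleftrightarrow\quad \psi^{-1}(y)>_{p_{\varphi^{-1}(i)}}\psi^{-1}(x)\qquad\text{for every }i\in H.
\]
At this point I would invoke that $\psi$, being conjugate to $\rho_0$, has order $2$, so $\psi^{-1}=\psi$, and the equivalence reads $x>_{p_i}y\Leftrightarrow\psi(y)>_{p_{\varphi^{-1}(i)}}\psi(x)$.

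The second step is purely combinatorial. The displayed equivalence shows that the set $\{i\in H: x>_{p_i}y\}$ coincides with $\{i\in H:\psi(y)>_{p_{\varphi^{-1}(i)}}\psi(x)\}$, and applying the bijection $i\mapsto\varphi^{-1}(i)$ of $H$ gives
\[
|\{i\in H: x>_{p_i}y\}|=|\{j\in H:\psi(y)>_{p_j}\psi(x)\}|.
\]
By the definition \eqref{sigmanu} of $\Sigma_\nu(p)$, the left-hand side is $\ge\nu$ precisely when $x>_\nu y$, and the right-hand side is $\ge\nu$ precisely when $\psi(y)>_\nu\psi(x)$; comparing the two gives the claimed equivalence.

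The only point demanding a little care — and the easiest place for a slip — is the computation of $\psi q\rho_0$: one must apply $\rho_0$ on the right first and $\psi$ on the left afterwards, and then use $\psi^{-1}=\psi$ to remove the inverse. Apart from this bookkeeping, the argument is a direct unwinding of the definitions together with the relabelling by $\varphi$, so I do not anticipate any genuine obstacle.
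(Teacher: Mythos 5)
Your proof is correct and follows essentially the same route as the paper's: both unwind the stabilizer condition $p^{(\varphi,\psi,\rho_0)}=p$ into the pointwise identity relating $p_i$ and $p_{\varphi^{-1}(i)}$ (the paper writes it as $p_{\varphi(i)}=\psi p_i\rho_0$ and uses \eqref{psiR} and \eqref{ro}), exploit $\psi^{-1}=\psi$, and then match the two sets of individuals via the bijection $\varphi$ to conclude that the two counts coincide. The only cosmetic difference is that you state a single set equality and transport it by $\varphi^{-1}$, whereas the paper proves the two inclusions $\varphi(A)\subseteq B$ and $\varphi(B)\subseteq A$.
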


\begin{proof} Let $x,y\in N$ and consider the two subsets of $H$ given by $A=\{i\in H: x>_{p_i} y\}$ and $B=\{i\in H: \psi(y)>_{p_i} \psi(x)\}$. Clearly it is enough to show  $|A|=|B|$. We do that proving  that $\varphi(A)\subseteq B$ and $\varphi(B)\subseteq A.$  If $i\in A$, then we have $x>_{p_i} y$ and thus, by the properties \eqref{psiR} and \eqref{ro}, we get $\psi(y)>_{\psi p_i\rho_0}\psi(x).$ But since $(\varphi,\psi,\rho_0)\in \mathrm{Stab}_U(p)$, we have that $\psi p_i\rho_0=p_{\varphi(i)}$ and thus $\psi(y)>_{ p_{\varphi(i)}}\psi(x),$ that is, $\varphi(i)\in B.$ Next let $i\in B$, that is, $\psi(y)>_{p_i} \psi(x).$ Arguing as before, we get $x=\psi\psi(x)>_{p_{\varphi(i)}} \psi\psi(y)=y,$ which means $\varphi(i)\in A.$
\end{proof}

Given $x,y\in N$ with $x\neq y$, a chain $\gamma$ for $\Sigma_{\nu}(p)$ (or a $\Sigma_{\nu}(p)$-chain) from $x$ to $y$ is an ordered sequence  $x_1,\ldots, x_l,$ with $l\ge 2,$ of distinct elements of $N$ such that $x_1=x$, $x_l=y,$ and, for every $j\in\{1,\ldots,l-1\}$, $(x_j,x_{j+1})\in\Sigma_{\nu}(p)$. The number $l-1$ is called the length of the chain, $x$ its starting point and $y$ its end point.

Consider then the following relation on $N$,
\begin{equation}\label{sigmac}
\Sigma^C_{\nu}(p)=\{(x,y)\in N\times N:
\mbox{there exists a}\  \Sigma_{\nu}(p)\mbox{-chain from } x\   \mbox{to}\  y\},
\end{equation}
and note that $\Sigma^C_{\nu}(p)\supseteq \Sigma_{\nu}(p).$

\begin{lemma}\label{asymmetry}
$\Sigma^C_{\nu}(p)$ is asymmetric and transitive. Moreover, for every $x,y\in N$, $(x,y)\in \Sigma^C_{\nu}(p)$ if and only if $(\psi(y),\psi(x))\in \Sigma^C_{\nu}(p)$.
\end{lemma}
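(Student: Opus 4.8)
The plan is to handle the three assertions in turn. Asymmetry of $\Sigma^C_\nu(p)$ will come from the acyclicity of $\Sigma_\nu(p)$ (which holds because $C_\nu(p)\neq\varnothing$), transitivity from a routine walk-shortening argument, and the $\psi$-invariance directly from Lemma \ref{nupsi} together with $\psi^2=id$.

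For asymmetry, I would fix some $f\in C_\nu(p)$, which exists since $C_\nu(p)\neq\varnothing$, and recall $\Sigma_\nu(p)\subseteq f$. If $(x,y)\in\Sigma^C_\nu(p)$, pick a $\Sigma_\nu(p)$-chain $x=x_1,\dots,x_l=y$; each pair $(x_j,x_{j+1})$ lies in $f$ with $x_j\neq x_{j+1}$, so antisymmetry of $f$ gives $x_j>_f x_{j+1}$, and transitivity of $>_f$ yields $x>_f y$; in particular $x\neq y$ and $(y,x)\notin f$. Were $(y,x)$ also in $\Sigma^C_\nu(p)$, the same argument would give $y>_f x$, hence $(x,y)\notin f$, contradicting $x>_f y$. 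So $\Sigma^C_\nu(p)$ is asymmetric.

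For transitivity, let $(x,y),(y,z)\in\Sigma^C_\nu(p)$. First note $x\neq z$: otherwise a chain from $x$ to $y$ and a chain from $y$ to $x$ would put both $(x,y)$ and $(y,x)$ into $\Sigma^C_\nu(p)$, against asymmetry. Concatenating a chain from $x$ to $y$ with a chain from $y$ to $z$ gives a sequence $w_1,\dots,w_m$ with $w_1=x$, $w_m=z$ and $(w_i,w_{i+1})\in\Sigma_\nu(p)$ for all $i$; whenever $w_i=w_j$ with $i<j$, deleting $w_{i+1},\dots,w_j$ leaves the consecutive pair $(w_i,w_{j+1})=(w_j,w_{j+1})\in\Sigma_\nu(p)$ and shortens the sequence, so iterating produces a sequence of distinct elements. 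Since $x\neq z$ this final sequence has length at least $2$, hence is a $\Sigma_\nu(p)$-chain from $x$ to $z$, and $(x,z)\in\Sigma^C_\nu(p)$. The only point needing care here is exactly this last one: the shortened sequence must retain at least two entries to count as a chain under the paper's definition, which is why $x\neq z$ has to be secured first; I do not expect a genuine obstacle.

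For the $\psi$-invariance, recall that $\psi$ is conjugate to $\rho_0$, so $|\psi|=2$ and $\psi^2=id$; hence it suffices to show $(x,y)\in\Sigma^C_\nu(p)$ implies $(\psi(y),\psi(x))\in\Sigma^C_\nu(p)$, the converse following by applying this to $(\psi(y),\psi(x))$. Given a $\Sigma_\nu(p)$-chain $x=x_1,\dots,x_l=y$, Lemma \ref{nupsi} turns $x_j>_\nu x_{j+1}$ into $\psi(x_{j+1})>_\nu\psi(x_j)$, so the reversed image sequence $\psi(x_l),\psi(x_{l-1}),\dots,\psi(x_1)$ has all consecutive pairs in $\Sigma_\nu(p)$, consists of distinct elements because $\psi$ is injective, and runs from $\psi(y)$ to $\psi(x)$; it is therefore a $\Sigma_\nu(p)$-chain, giving $(\psi(y),\psi(x))\in\Sigma^C_\nu(p)$.
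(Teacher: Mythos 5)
Your proposal is correct, and it diverges from the paper's argument in one substantive place: the proof of asymmetry. The paper notes that $C_{\nu}(p)\neq\varnothing$ forces $\Sigma_{\nu}(p)$ to be acyclic, and then, given chains from $x$ to $y$ and from $y$ to $x$, explicitly splices them (via the index $m^*=\min A$) into a cycle of $\Sigma_{\nu}(p)$, contradicting acyclicity. You instead fix a linear order $f\in C_{\nu}(p)$ containing $\Sigma_{\nu}(p)$ and push each chain through the transitivity of $>_f$ to get $x>_f y$ and $y>_f x$ simultaneously; this avoids the combinatorial surgery on the two chains and is arguably cleaner, at the cost of invoking the linear extension $f$ rather than bare acyclicity (both rest on the same fact $C_\nu(p)\neq\varnothing$). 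For transitivity, the paper shows the concatenated sequence is already made of distinct elements (any coincidence $x_i=y_j$ would violate the asymmetry just proved), whereas you concede possible repetitions and remove them by walk-shortening; both work, and your observation that $x\neq z$ must be secured first so the shortened walk still has length at least $2$ is exactly the right point of care (note only that when the repeated index $j$ equals the last position $m$ your deletion is a plain truncation, with no surviving pair $(w_i,w_{j+1})$ to check --- a harmless edge case, and one that in fact cannot occur here by asymmetry). The $\psi$-invariance argument is identical to the paper's: reverse the chain, apply Lemma \ref{nupsi} termwise, and use $|\psi|=2$ for the converse.
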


\begin{proof}
Let us prove first that $\Sigma^C_{\nu}(p)$ is asymmetric.
Let $(x,y)\in \Sigma^C_{\nu}(p)$. Then, there exists a $\Sigma_{\nu}(p)$-chain from $ x$  to $ y$, that is, there exist $l\ge 2$ distinct $x_1,\ldots,x_l\in N$ such that $x=x_1$, $y=x_l$ and, for every $j\in\{1,\ldots,l-1\}$, $x_j>_{\nu}x_{j+1}$.
Assume, by contradiction, that  $(y,x)\in \Sigma^C_{\nu}(p).$ Then there exist $m\ge 2$ distinct $y_1,\ldots,y_m\in N$ such that $y=y_1$, $x=y_m$ and, for every $j\in\{1,\ldots,m-1\}$, $y_j>_{\nu}y_{j+1}$. Consider now the set
$
A=\{j\in\{2,\ldots,m\}: y_j\in\{x_1,\ldots,x_{l-1}\}\}.
$
Clearly, because $y_m=x_1,$ we have $m\in A\neq \varnothing$. Let us define then $m^*=\min A$, so that there exists $l^*\in\{1,\ldots,l-1\}$ such that $y_{m^*}=x_{l^*}$. Then it is easy to check that $x_{l^*},x_{l^*+1},\ldots, x_l,y_2,\ldots, y_{m^*}$ is a sequence of at least three elements in $N$, with no repetition up to the $x_{l^*}=y_{m^*},$ which is a cycle in $\Sigma_{\nu}(p)$ and the contradiction is found.

Let us prove now that $\Sigma^C_{\nu}(p)$ is transitive. Let $(x,y),(y,z)\in \Sigma^C_{\nu}(p)$.
Then, by the definition of $\Sigma^C_{\nu}(p)$,  there exist $l\ge 2$ distinct $x_1,\ldots,x_l\in N$ such that $x=x_1$, $y=x_l$ and, for every $j\in\{1,\ldots,l-1\}$, $x_j>_{\nu}x_{j+1}$; moreover, there are $m\ge 2$ distinct $y_1,\ldots,y_m\in N$ such that $y=y_1$, $z=y_m$ and, for every $j\in\{1,\ldots,m-1\}$, $y_j>_{\nu}y_{j+1}$. Consider then the sequence $\gamma$ of alternatives  $x_1,\ldots,x_l,y_2,\ldots,y_m$. We show that those alternatives are all distinct. Assume that there exist $i\in \{1,\ldots,l\}$ and $j\in \{1,\ldots,m\}$ with $x_i=y_j$. Then we have a $\Sigma_{\nu}(p)$-chain with starting point $x_i$ and end point $y$ as well as a $\Sigma_{\nu}(p)$-chain with starting point $y$ and end point $y_j=x_i$, that is, $(x_i,y)\in \Sigma^C_{\nu}(p)$ and $(y,x_i)\in \Sigma^C_{\nu}(p),$ against the asymmetry.
It follows that $\gamma$ is a chain from $x$ to $z$, so that $(x,z)\in \Sigma^C_{\nu}(p)$.

We are left with proving that $(x,y)\in \Sigma^C_{\nu}(p)$ if and only if $(\psi(y),\psi(x))\in \Sigma^C_{\nu}(p)$. Let $(x,y)\in \Sigma^C_{\nu}(p)$ and consider $l\ge 2$ distinct $x_1,\ldots,x_l\in N$ such that $x=x_1$, $y=x_l$ and, for every $j\in\{1,\ldots,l-1\}$, $x_j>_{\nu}x_{j+1}$. Defining, for every $j\in\{1,\ldots,l\}$, $y_j=\psi(x_{l-j+1})$ and using Lemma \ref{nupsi}, it is immediately checked that $\psi(y)=y_1$, $\psi(x)=y_l$ and that, for every $j\in\{1,\ldots,l-1\}$, $y_{j}>_{\nu}y_{j+1}$. In other words, we have a $\Sigma^C_{\nu}(p)$-chain from $\psi(y)$ to $\psi(x),$ that is,
 $(\psi(y),\psi(x))\in \Sigma^C_{\nu}(p)$. The other implication is now a trivial consequence of $|\psi|=2$.
\end{proof}

In what follows, we write $x\hookrightarrow_{\nu}y$ instead of $(x,y)\in \Sigma^C_{\nu}(p)$ and $x\not\hookrightarrow_{\nu}y$ instead of $(x,y)\notin \Sigma^C_{\nu}(p)$.
As consequence of Lemma \ref{asymmetry}, for every $x,y,z\in N$, the following relations hold true:
$x\not\hookrightarrow_{\nu}x$;  $x\hookrightarrow_{\nu}y$ implies  $y\not\hookrightarrow_{\nu}x$; $x\hookrightarrow_{\nu}y$ and $y\hookrightarrow_{\nu}z$ imply  $x\hookrightarrow_{\nu}z$; $x\hookrightarrow_{\nu}y$ is equivalent to $\psi(y)\hookrightarrow_{\nu}\psi(x)$.

Define now, for every $z\in N$, the subset of $N$
\[
\Gamma(z)=\{x\in N: x\hookrightarrow_{\nu} z\}.
\]
Note that $z\notin \Gamma(z)$ and that it may happen that $\Gamma(z)=\varnothing$. This is the case exactly when, for every $x\in N$, the relation $x>_\nu z$ does not hold.
Define now the subset of $N$
\[
\Gamma=\mbox{$\bigcup_{z\in N}$}[\Gamma(z)\cap \Gamma(\psi(z))].
\]
This set represents those alternatives $x\in N$ from which we can reach, by a $\Sigma_{\nu}(p)$-chain, both $z$ and $\psi(z),$ for some $z\in N.$
Our idea is that $\Gamma$ collects those alternatives which necessarily belong to the superior half part of each vector in $S^U_2(p),$ because the majority relations implied by $p$ oblige them. Symmetrically, $\psi(\Gamma)$ collects those alternatives which necessarily belongs to the inferior half part of each vector in $S^U_2(p).$ To make that fact explicit we proceed by steps.

\begin{lemma}\label{catena-estesa}
Let $x,y\in N.$ If $y\in \Gamma$ and $x\hookrightarrow_{\nu} y$, then $x\in \Gamma.$
\end{lemma}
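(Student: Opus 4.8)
The plan is to unwind the definition of $\Gamma$ and apply the transitivity of $\Sigma^C_{\nu}(p)$ established in Lemma \ref{asymmetry}. First I would observe that, since $y\in\Gamma=\bigcup_{z\in N}[\Gamma(z)\cap \Gamma(\psi(z))]$, there exists some $z\in N$ with $y\in\Gamma(z)\cap\Gamma(\psi(z))$; by the definition of $\Gamma(\cdot)$ this means $y\hookrightarrow_{\nu}z$ and $y\hookrightarrow_{\nu}\psi(z)$.

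Next I would combine this with the hypothesis $x\hookrightarrow_{\nu}y$. Since $\hookrightarrow_{\nu}$ (that is, the relation $\Sigma^C_{\nu}(p)$) is transitive by Lemma \ref{asymmetry}, from $x\hookrightarrow_{\nu}y$ and $y\hookrightarrow_{\nu}z$ we get $x\hookrightarrow_{\nu}z$, and from $x\hookrightarrow_{\nu}y$ and $y\hookrightarrow_{\nu}\psi(z)$ we get $x\hookrightarrow_{\nu}\psi(z)$. Hence $x\in\Gamma(z)\cap\Gamma(\psi(z))\subseteq\Gamma$, which is the claim. There is no genuine obstacle here: the entire argument is bookkeeping with the definitions, the same witness $z\in N$ serving for $x$ as for $y$, so the presence of $\psi$ causes no complication.
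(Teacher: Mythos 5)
Your proof is correct and is essentially identical to the paper's: both extract a witness $z$ with $y\hookrightarrow_{\nu}z$ and $y\hookrightarrow_{\nu}\psi(z)$ from $y\in\Gamma$ and then apply the transitivity of $\Sigma^C_{\nu}(p)$ from Lemma \ref{asymmetry} to conclude $x\in\Gamma(z)\cap\Gamma(\psi(z))\subseteq\Gamma$. Nothing further is needed.
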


\begin{proof} Let $x, y\in N$ and $x\hookrightarrow_{\nu} y$ with $y\in \Gamma$. Thus,
there exists $z\in N$ such that $y\hookrightarrow_{\nu}z$ and $y\hookrightarrow_{\nu}\psi(z).$  By transitivity we conclude that also $x\hookrightarrow_{\nu}z$ and $x\hookrightarrow_{\nu}\psi(z)$, that is, $x\in \Gamma$.
\end{proof}

\begin{lemma}\label{psiR-lemma}The following facts hold true:\vspace{-2mm}
\begin{itemize}
\item[i)] $\Gamma\,\cap\,\psi(\Gamma)=\varnothing;$\vspace{-2mm}
\item[ii)] if $n$ is odd, then $\hat{x}_k\notin \Gamma\,\cup\, \psi(\Gamma)$.  Moreover, if $x\in \Gamma$, then $\hat{x}_k\not \hookrightarrow_{\nu} x;$\vspace{-2mm}
\item[iii)] for every $x\in N$, $|\{x,\psi(x)\}\cap \Gamma|\leq 1.$\vspace{-2mm}
\end{itemize}
\end{lemma}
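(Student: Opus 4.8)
The plan is to derive all three items from the structural properties of $\Sigma^C_\nu(p)$ established in Lemma \ref{asymmetry} — asymmetry, transitivity, and the equivalence $x\hookrightarrow_\nu y\Leftrightarrow\psi(y)\hookrightarrow_\nu\psi(x)$ — together with Lemma \ref{catena-estesa}. Item i) is the heart of the matter, and items ii) and iii) will follow from it in a few lines.

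For i) I would argue by contradiction, assuming $x\in\Gamma\cap\psi(\Gamma)$. From $x\in\Gamma$ choose $z\in N$ with $x\hookrightarrow_\nu z$ and $x\hookrightarrow_\nu\psi(z)$. Since $|\psi|=2$, the membership $x\in\psi(\Gamma)$ is the same as $\psi(x)\in\Gamma$, so choose $w\in N$ with $\psi(x)\hookrightarrow_\nu w$ and $\psi(x)\hookrightarrow_\nu\psi(w)$. Applying the equivalence of Lemma \ref{asymmetry} to these last two relations (and using $\psi^2=id$) rewrites them as $\psi(w)\hookrightarrow_\nu x$ and $w\hookrightarrow_\nu x$. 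Transitivity then gives $w\hookrightarrow_\nu z$ (chaining $w\hookrightarrow_\nu x\hookrightarrow_\nu z$) and $\psi(w)\hookrightarrow_\nu\psi(z)$ (chaining $\psi(w)\hookrightarrow_\nu x\hookrightarrow_\nu\psi(z)$); applying the equivalence once more to the latter yields $z\hookrightarrow_\nu w$. Now $w\hookrightarrow_\nu z$ together with $z\hookrightarrow_\nu w$ contradicts asymmetry, whence $\Gamma\cap\psi(\Gamma)=\varnothing$.

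For ii), with $n$ odd we have $\psi(\hat{x}_k)=\hat{x}_k$; if $\hat{x}_k$ belonged to $\Gamma$ (resp.\ to $\psi(\Gamma)$), then $\hat{x}_k=\psi(\hat{x}_k)$ would also belong to $\psi(\Gamma)$ (resp.\ to $\Gamma$), contradicting i), so $\hat{x}_k\notin\Gamma\cup\psi(\Gamma)$. For the second assertion, if $x\in\Gamma$ and $\hat{x}_k\hookrightarrow_\nu x$, then Lemma \ref{catena-estesa} would force $\hat{x}_k\in\Gamma$, which we have just excluded. For iii), fix $x\in N$: if $\psi(x)=x$ then $n$ is odd and $x=\hat{x}_k$, so $\{x,\psi(x)\}\cap\Gamma=\varnothing$ by ii); otherwise $\{x,\psi(x)\}$ has two elements, and if both lay in $\Gamma$ then $\psi(x)\in\Gamma$ would give $x=\psi(\psi(x))\in\psi(\Gamma)$, contradicting i) — so at most one of $x,\psi(x)$ lies in $\Gamma$.

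The only delicate point is the bookkeeping in i): the equivalence $x\hookrightarrow_\nu y\Leftrightarrow\psi(y)\hookrightarrow_\nu\psi(x)$ reverses the direction of the arrow, so one must apply it to exactly the right relations, in the right order, in order to close the cycle $w\hookrightarrow_\nu z\hookrightarrow_\nu w$; the rest is routine.
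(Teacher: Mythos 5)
Your proof is correct and follows essentially the same route as the paper's: part i) by extracting witnesses for $x\in\Gamma$ and $\psi(x)\in\Gamma$ and combining the asymmetry, transitivity and $\psi$-reversal properties of $\Sigma^C_{\nu}(p)$ from Lemma \ref{asymmetry} to produce a forbidden two-cycle (the paper closes the contradiction at $x\hookrightarrow_{\nu}\psi(x)$ and $\psi(x)\hookrightarrow_{\nu}x$ rather than at $w\hookrightarrow_{\nu}z$ and $z\hookrightarrow_{\nu}w$, a purely cosmetic difference), and parts ii) and iii) as quick corollaries of i) together with Lemma \ref{catena-estesa}. No gaps.
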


\begin{proof} i) Assume that there exists $x\in N$ with $x\in \Gamma$ and $x\in \psi(\Gamma).$ Since $|\psi|=2$, this gives $\psi(x)\in \Gamma,$ so that
there exist $z,y\in N$ with
\begin{equation}\label{x}
x\hookrightarrow_{\nu} z,\quad x\hookrightarrow_{\nu} \psi(z),\quad \psi(x)\hookrightarrow_{\nu} y,\quad \psi(x)\hookrightarrow_{\nu} \psi(y).
\end{equation}
By Lemma \ref{asymmetry} applied to the second and fourth relation in \eqref{x}, we also get
\begin{equation}\label{psix2}
z\hookrightarrow_{\nu} \psi(x),\quad y\hookrightarrow_{\nu} x.
\end{equation}
From \eqref{x} and \eqref{psix2} and by transitivity of $\hookrightarrow_{\nu}$, we deduce that $\psi(x)\hookrightarrow_{\nu}  x$ and $x \hookrightarrow_{\nu} \psi(x)$, against the asymmetry of $\Sigma^C_{\nu}(p)$ established in Lemma \ref{asymmetry}.

ii) Assume that $\hat{x}_k\in \Gamma\,\cup\,\psi(\Gamma)$. Then, by i), we have $\hat{x}_k=\psi(\hat{x}_k)\in \Gamma\,\cap\,\psi(\Gamma)=\varnothing,$ a contradiction.
Next let $x\in \Gamma$ and $\hat{x}_k\hookrightarrow_{\nu} x.$ Then, by Lemma \ref{catena-estesa}, we also have $\hat{x}_k\in \Gamma$, a contradiction.

iii) Assume there is $x\in N$ such that both $x$ and $\psi(x)$ belong to $\Gamma.$ Then $x\in \psi(\Gamma)$ and so $x\in \Gamma\,\cap\,\psi(\Gamma)=\varnothing,$ against i).
\end{proof}

\begin{lemma}\label{fisso} Let $n$ be odd and $x\in N$. Then:\vspace{-2mm}
\begin{itemize}
\item[i)]$x\hookrightarrow_{\nu} \hat{x}_k$ implies $\{x,\psi(x)\}\cap \Gamma=\{x\}$;\vspace{-2mm}
\item[ii)] $\hat{x}_k\hookrightarrow_{\nu} x$ implies $\{x,\psi(x)\}\cap \Gamma=\{\psi(x)\}.$
\end{itemize}
\end{lemma}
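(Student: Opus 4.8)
The plan is to exploit the single feature that distinguishes $\hat{x}_k$ from every other alternative when $n$ is odd: it is the unique fixed point of $\psi$, so $\psi(\hat{x}_k)=\hat{x}_k$. Consequently membership in $\Gamma$ can be detected by the witness $z=\hat{x}_k$: since $\Gamma(\hat{x}_k)\cap\Gamma(\psi(\hat{x}_k))=\Gamma(\hat{x}_k)$, one has $x\in\Gamma$ as soon as $x\hookrightarrow_{\nu}\hat{x}_k$. Combined with Lemma \ref{psiR-lemma}(iii), which says that at most one of $x$ and $\psi(x)$ can lie in $\Gamma$, this is essentially all that is needed; the remainder is bookkeeping about which of $x,\psi(x)$ is the surviving one.

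For (i), assume $x\hookrightarrow_{\nu}\hat{x}_k$. First note $x\neq\hat{x}_k$: otherwise $\hat{x}_k\hookrightarrow_{\nu}\hat{x}_k$, contradicting the asymmetry of $\Sigma^C_{\nu}(p)$ (Lemma \ref{asymmetry}); hence $x\neq\psi(x)$ and $\{x,\psi(x)\}$ genuinely has two elements. Taking $z=\hat{x}_k$ in the definition of $\Gamma$ and using $\psi(\hat{x}_k)=\hat{x}_k$, we get $x\in\Gamma(\hat{x}_k)\cap\Gamma(\psi(\hat{x}_k))\subseteq\Gamma$. By Lemma \ref{psiR-lemma}(iii) we then have $\psi(x)\notin\Gamma$, so $\{x,\psi(x)\}\cap\Gamma=\{x\}$, as claimed.

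For (ii), assume $\hat{x}_k\hookrightarrow_{\nu}x$. Apply the $\psi$-equivariance of $\hookrightarrow_{\nu}$ from Lemma \ref{asymmetry} (namely $a\hookrightarrow_{\nu}b$ if and only if $\psi(b)\hookrightarrow_{\nu}\psi(a)$) with $a=\hat{x}_k$ and $b=x$: since $\psi(\hat{x}_k)=\hat{x}_k$, this yields $\psi(x)\hookrightarrow_{\nu}\hat{x}_k$. Now part (i) applied to $\psi(x)$ in place of $x$ gives $\{\psi(x),\psi(\psi(x))\}\cap\Gamma=\{\psi(x)\}$, and since $|\psi|=2$ we have $\psi(\psi(x))=x$, whence $\{x,\psi(x)\}\cap\Gamma=\{\psi(x)\}$. (Alternatively one repeats the argument of (i) directly: $\psi(x)\hookrightarrow_{\nu}\hat{x}_k$ gives $\psi(x)\in\Gamma$, $x\neq\psi(x)$ because $x=\hat{x}_k$ is again impossible, and Lemma \ref{psiR-lemma}(iii) forces $x\notin\Gamma$.)

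There is essentially no real obstacle; the only point requiring a moment's care is recording that $x\neq\psi(x)$ (equivalently $x\neq\hat{x}_k$), so that the displayed set equalities hold literally rather than up to the degenerate collapse $\{x,\psi(x)\}=\{\hat{x}_k\}$, and this is immediate from asymmetry. The lemma is best viewed as the bridge that will later allow us, in the construction of the element $q\in S^U_2(p)$, to place $\hat{x}_k$ at the middle rank while sending $\Gamma$ above it and $\psi(\Gamma)$ below it.
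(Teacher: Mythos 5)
Your proof is correct and follows essentially the same route as the paper's: part (i) via $x\in\Gamma(\hat{x}_k)=\Gamma(\hat{x}_k)\cap\Gamma(\psi(\hat{x}_k))\subseteq\Gamma$ together with Lemma \ref{psiR-lemma} (the paper invokes item (i) of that lemma where you invoke item (iii), an immaterial difference), and part (ii) by applying the $\psi$-equivariance from Lemma \ref{asymmetry} to reduce to part (i). The extra observation that $x\neq\hat{x}_k$ is harmless but not needed, since the claimed set equality holds even in the degenerate case.
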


\begin{proof} i) From $x\hookrightarrow_{\nu} \hat{x}_k$, we get $x\in \Gamma(\hat{x}_k)\subseteq \Gamma$ and thus Lemma \ref{psiR-lemma} i) gives $\psi(x)\notin \Gamma$. It follows that $\{x,\psi(x)\}\cap \Gamma=\{x\}$.

ii)  From $\hat{x}_k\hookrightarrow_{\nu} x$, using Lemma \ref{asymmetry}, we obtain $\psi(x)\hookrightarrow_{\nu} \hat{x}_k$ and i) applies to $\psi(x)$, giving $\{x,\psi(x)\}\cap \Gamma=\{\psi(x)\}.$
\end{proof}

Given $X\subseteq N$, define
\[
C_{\nu}(p,X)=\left\{f\in \mathcal{L}(X) : f \supseteq  \Sigma_\nu(p)\cap(X\times X)\right\}.
\]
Note that $C_{\nu}(p,N)=C_{\nu}(p).$
Note also that if $Y\subseteq X \subseteq N$ and $f\in C_{\nu}(p,X),$ then the restriction of $f$ to $Y$ belongs to $C_{\nu}(p,Y).$

For every $j\in K$, consider $\{\hat{x}_j,\psi(\hat{x}_j)\}\cap \Gamma$. By Lemma \ref{psiR-lemma} iii) the order of this set cannot exceed 1. Define then the sets
\[
J=\{j\in K: |\{\hat{x}_j,\psi(\hat{x}_j)\}\cap \Gamma|=1\}, \quad J^*=\{j\in K\setminus J: |\{\hat{x}_j,\psi(\hat{x}_j)\}|= 2\}.
\]
Of course, for every $j\in K\setminus J$, $\{\hat{x}_j,\psi(\hat{x}_j)\}\cap \Gamma=\varnothing.$ Note also that, when $n$ is even, we have  $J\cup J^*=K$; if $n$ is odd, by Lemma \ref{psiR-lemma} ii), we have $J\cup J^*=K\setminus\{k\}.$
For every $j\in J$, let us call $y_j$ the unique element in the set $\{\hat{x}_j,\psi(\hat{x}_j)\}\cap \Gamma$ so that $\Gamma=\{y_j:j\in J\}.$

Consider now the subset of $N$ defined by
\[
T=\{y_j : j\in J\}\cup \,\mbox{$\bigcup_{j\in J^*}$}\{\hat{x}_j,\psi(\hat{x}_j)\}
\]
and note that $T\supseteq \Gamma.$ We
consider the relation $\Sigma_{\nu}(p)\cap (T\times T)$. That relation is acyclic, because included in the acyclic relation $\Sigma_{\nu}(p)$, and thus we have $C_{\nu}(p,T)\neq \varnothing$.
Let $f\in C_{\nu}(p,T)$ and for $j\in J^*,$ let $y_j$ be the maximum of $\{\hat{x}_j,\psi(\hat{x}_j)\}$ with respect to $f,$ so that $y_j>_f \psi(y_j)$. Define
\[
M=\{y_j:j\in J\cup J^*\}.
\]
Observe that if $n$ is even, then $M\cup \psi(M)=N$; if $n$ is odd, then $M\cup \psi(M)=N\setminus\{\hat{x}_k\}$. Moreover, we have $|M|=\lfloor \frac{n}{2}\rfloor$, $M\cap \psi(M)=\varnothing$ and $\Gamma\subseteq M \subseteq T$.
The restriction of $f\in C_{\nu}(p,T)$ to $M$ is a linear order $g\in C_{\nu}(p,M),$ say
\[
g=\big[
a_1,\ldots, a_{\lfloor \frac{n}{2}\rfloor}
\big]^T,
\]
where $M=\{a_1,\ldots, a_{\lfloor \frac{n}{2}\rfloor}\}.$
 Finally,  consider the linear order on $N$ given by
\begin{equation}\label{q-magic}
q=\left\{
\begin{array}{ll}
\big[
a_1,\ldots,a_{\lfloor \frac{n}{2}\rfloor},
\psi\big(a_{\lfloor \frac{n}{2}\rfloor}\big),\ldots,\psi(a_1)
\big]^T&\mbox{ if }n\mbox{ is even}\\
\vspace{-2mm}\\
\big[a_1,\ldots,a_{\lfloor \frac{n}{2}\rfloor},
\hat{x}_k,
\psi\big(a_{\lfloor \frac{n}{2}\rfloor}\big),\ldots,\psi(a_1)
\big]^T&\mbox{ if }n\mbox{ is odd},\\
\end{array}
\right.
\end{equation}
and prove that $q\in S^U_2(p)$.
Note that in the upper part of $q$ there are the alternatives in $M$ and in the lower one those in $\psi(M)$; in the odd case, the fixed point $\hat{x}_k$ of $\psi$ is ranked in the middle, at the position
$k=\frac{n+1}{2}$.

By construction, we have that 
 $\psi q\rho_0=q,$ which implies, due to the regularity of $U$, that $q\in S^U_1(p)$.
As a consequence, we have that, for every $x,y\in N$,
\begin{equation}\label{consequence} y>_q x \  \hbox{ if and only if } \psi(x)>_q \psi(y).
\end{equation}
Indeed, by (\ref{psiR}) and (\ref{ro}), we have
$$y>_q x\quad\Leftrightarrow \quad y>_{\psi q\rho_0} x\quad\Leftrightarrow \quad x>_{\psi q} y\quad\Leftrightarrow \quad \psi(x)>_{ q} \psi(y).$$

In order to complete the proof we need to show that $q\in C_{\nu}(p)$, that is, that  for every $x,y\in N$, $x>_\nu y$ implies $x>_q y$.
Since when $n$ is even we have $N=M\cup \psi(M)$ and when $n$ is odd we have $N=M\cup \psi(M)\cup\{\hat{x}_k\},$ we reduce to prove that, for every $x,y\in M$:\vspace{-1mm}
\begin{itemize}
\item [a)] $x>_\nu y$ implies $x>_q y$;\vspace{-1mm}
\item [b)] $x>_\nu\psi(y)$ implies $x>_q \psi(y)$;\vspace{-1mm}
\item [c)] $\psi(x)>_\nu\psi(y)$ implies $\psi(x)>_q \psi(y)$;\vspace{-1mm}
\item [d)] $\psi(x)>_\nu y$ implies $\psi(x)>_q y$,\vspace{-1mm}
\end{itemize}
and, in the odd case, showing further that, for every $x\in M$:\vspace{-1mm}
\begin{itemize}
\item [e)] $x>_\nu \hat{x}_k$ implies $x>_q \hat{x}_k$;\vspace{-1mm}
\item [f)] $\hat{x}_k>_\nu x$ implies $\hat{x}_k>_q x$;\vspace{-1mm}
\item [g)] $\psi(x)>_\nu \hat{x}_k$ implies $\psi(x)>_q \hat{x}_k$;\vspace{-1mm}
\item [h)] $\hat{x}_k>_\nu \psi(x)$ implies $\hat{x}_k>_q \psi(x).$\vspace{-1mm}
\end{itemize}
Fix then $x,y\in M$ and prove first a), b), c) and d). Note that $x\neq \psi(x)$ and $y\neq\psi(y),$ because we have observed that if $\psi$ admits the fixed point $\hat{x}_k$, then $\hat{x}_k\notin M.$\vspace{-1mm}
\begin{itemize}
\item [a)] Let $x>_\nu y$. Since $g\in C_\nu(p,M)$, we have $x>_g y$ and then also $x>_q y$.\vspace{-1mm}
\item [b)] By the construction of $q$, for every $x,y\in M,$ we have $x>_q \psi(y).$\vspace{-1mm}
\item [c)] Let $\psi(x)>_\nu\psi(y)$. Then, by Lemma \ref{nupsi}, we also have $y>_\nu x$ and by a) we get $y>_q x$, which  by (\ref{consequence}) implies $\psi(x)>_q \psi(y)$.\vspace{-1mm}
\item [d)] Let us prove that the condition $\psi(x)>_\nu y$ never realizes. Indeed, assume by contradiction $\psi(x)>_\nu y$. Then, by Lemma \ref{nupsi} we also have $\psi(y)>_\nu x$.
If $y\in \Gamma$, then, by Lemma \ref{catena-estesa},  we have $\psi(x)\in \Gamma\subseteq M$ and thus $x\in M\cap \psi(M)=\varnothing$, a contradiction.
Thus, we reduce to the case $y\not\in \Gamma$. If $x\in \Gamma$, then, again by Lemma \ref{catena-estesa}, $\psi(y)\in \Gamma\subseteq M$ and we get the  contradiction $y\in M\cap \psi(M)=\varnothing$.
 If instead $x\not\in \Gamma$, then $x,\psi(x),y,\psi(y)\in T$.
As a consequence,  $\psi(x)>_\nu y$ implies $\psi(x)>_f y$ and $\psi(y)>_\nu x$ implies $\psi(y)>_f x$. Moreover, as $y$ is the maximum of $\{y,\psi(y)\}$ with respect to $f,$ we also have $y>_f\psi(y)$. By transitivity of $f$, we then get $\psi(x)>_f x,$ which is a contradiction because $x$ is the maximum of $\{x,\psi(x)\}$ with respect to $f$.\vspace{-1mm}
\end{itemize}
Assume now that $n$ is odd. Then fix $x\in M$ and prove e), f), g) and h).\vspace{-1mm}
\begin{itemize}
\item [e)] This case is trivial, because, by the construction of $q$, for every $x\in M$, we have $x>_q \hat{x}_k.$\vspace{-1mm}
\item [f)] Let us prove that it cannot be $\hat{x}_k>_\nu x$. Assume, by contradiction, $\hat{x}_k>_\nu x$. Then, by Lemma \ref{fisso} ii) we get $\psi(x)\in \Gamma\subseteq M,$ against $M\cap \psi(M)=\varnothing.$\vspace{-1mm}
\item [g)] Let us prove that it cannot be $\psi(x)>_\nu \hat{x}_k$.  Assume, by contradiction, $\psi(x)>_\nu \hat{x}_k$. Then, by Lemma \ref{fisso} i), we get
$\psi(x)\in \Gamma\subseteq M$ against $M\cap \psi(M)=\varnothing$.\vspace{-1mm}
\item [h)] This case is trivial because, by the construction of $q$, we have $\hat{x}_k>_q\psi(x)$ for all $x\in M.$
\end{itemize}

\vspace{5mm}
\noindent {\Large{\bf References}}
\vspace{2mm}

\noindent Bubboloni, D., Gori, M., 2013. Anonymous, neutral and reversal symmetric majority rules. Working Papers-Mathematical Economics 2013-05, Universit\`a degli Studi di Firenze, Dipartimento di Scienze per l'Economia e l'Impresa.
\vspace{2mm}

\noindent Bubboloni, D., Gori, M., 2014. Anonymous and neutral majority rules. Social Choice and Welfare 43, 377-401.
\vspace{2mm}

\noindent Campbell, D.E., Kelly, J.S., 2011. Majority selection of one alternative from a binary agenda.
Economics Letters 110, 272-273.
\vspace{2mm}

\noindent Campbell, D.E., Kelly, J.S., 2013. Anonymity, monotonicity, and limited neutrality: selecting a single alternative
from a binary agenda. Economics Letters 118, 10-12.
\vspace{2mm}

\noindent Kelly, J.S., 1991. Symmetry groups. Social Choice and Welfare 8, 89-95.
\vspace{2mm}

\noindent Moulin, H., 1983. \textit{The strategy of social choice}, Advanced Textbooks in Economics, North Holland Publishing Company.
\vspace{2mm}

\noindent Perry, J., Powers, R.C., 2008. Aggregation rules that satisfy anonymity and neutrality. Economics Letters 100, 108-110.
\vspace{2mm}

\noindent Powers, R.C., 2010. Maskin monotonic aggregation rules and partial anonymity. Economics Letters 106, 12-14.
\vspace{2mm}

\noindent  Quesada, A., 2013. The majority rule with a chairman. Social Choice and Welfare 40, 679-691.
\vspace{2mm}

\noindent  Rose, J.S., 1978. \textit{A course on group theory}, Cambridge University Press, Cambridge.
\vspace{2mm}

\noindent Wielandt, H., 1964. \textit{Finite permutation groups}, Academic Press, New York.
\vspace{2mm}

\end{document}